\newtheorem{defn}{Definition}[section]
\newtheorem{thm}[defn]{Theorem}
\newtheorem{lem}[defn]{Lemma}
\newtheorem{prop}[defn]{Proposition}
\newtheorem{ex}[defn]{Example}
\newtheorem{re}[defn]{Remark}
\begin{document}
\title{{\bf Generalized derivations, quasiderivations and centroids of ternary Jordan algebras}}
\author{\normalsize \bf Chenrui Yao, Yao Ma,  Liangyun Chen}
\date{{\small{ School of Mathematics and Statistics,  Northeast Normal University,\\ Changchun 130024, CHINA
}}} \maketitle
\date{}

   {\bf\begin{center}{Abstract}\end{center}}

In this paper, we give some construction about ternary Jordan algebras at first. Next we study relationships between generalized derivations, quasiderivations and centroids of ternary Jordan algebras. We show that for ternary Jordan algebras, generalized derivation algebras are the sum of quasiderivation algebras and centroids where centroids are ideals of generalized derivation algebras. We also prove that quasiderivations can be embedded into larger ternary Jordan algebras as derivations. In particular, we also determine dimensions of ternary Jordan algebras in the case of all linear transformations are quasiderivations. Some properties about centroids of ternary Jordan algebras are also displayed.

\noindent\textbf{Keywords:} \, Generalized derivations, quasiderivations, centroid, ternary Jordan algebras.\\
\textbf{2010 Mathematics Subject Classification:} 17B40, 17C10, 17C99.
\renewcommand{\thefootnote}{\fnsymbol{footnote}}
\footnote[0]{ Corresponding author(L. Chen): chenly640@nenu.edu.cn.}
\footnote[0]{Supported by  NNSF of China (Nos. 11771069 and 11801066), NSF of  Jilin province (No. 20170101048JC),   the project of Jilin province department of education (No. JJKH20180005K) and the Fundamental Research Funds for the Central Universities(No. 130014801).}

\section{Introduction}
In 1932, the physicist Jordan proposed a program to discover a new algebraic setting for quantum mechanics, so Jordan algebras were created in this proceeding. Moreover, Jordan algebras were truned out to have illuminating connections with many areas of mathematics. Albert renamed them Jordan algebras and developed a successful structure theory for Jordan algebras over any field of characteristic zero in \cite{A1}.

As a generalization of Jordan algebras, Kaygorodov, Pozhidaev and Saraiva gave the definition of $n$-ary Jordan algebras in \cite{IAP}. They constructed some examples of ternary Jordan algebras as well, including construction via matrix algebras and Cayley-Dickson algebras. What's more, they also constructed a class of ternary Jordan algebras by TKK-construction, which is famous in the classical Jordan algebra theory. In this paper, we also give some construction via Jordan algebras and associative algebras. What's more, we also give a kind of construction about Jordan algebras via ternary Jordan algebras. In a sense, Jordan algebras are a special case of ternary Jordan algebras. So in the following, let's turn our attention to ternary Jordan algebras.

As is well known, derivations and generalized derivations play an important role in the research of structure and property in algebraic system. In \cite{HLS}, authors developed an approach to deformations of the Witt and Virasoro algebras based on $\sigma$-derivations. In \cite{LS}, authors showed in \cite{HLS} that when the deformation scheme is applied to $sl_{2}(\rm{F})$ one can, by choosing parameters suitably, deform $sl_{2}(\rm{F})$ into the Heisenberg Lie algebra and some other 3-dimensional Lie algebras in addition to more exotic types of algebras, this being in stark contrast to the classical deformation schemes where $sl_{2}(\rm{F})$ is rigid. Study about derivations originated from \cite{F1}. Bre$s$ar has made great contributions to the research of derivations. In \cite{B1}, he determined types of derivations on $R \otimes S$ where $R$ and $S$ are two nonassociative unital algebras under some assumptions. More results about derivations refer to \cite{B2, BCS}. The research on generalized derivations was started by Leger and Luks in \cite{L2}. They gave many properties about generalized derivations of Lie algebras. From then on, many authors generalized these results to other algebras in \cite{Z1,LW,CMN,ZCM,S1}.

And in this paper, we focus on ternary Jordan algebras and develop a complete theory of generalized derivations of ternary Jordan algebras. It's worth noting that quasicentroid of a ternary Jordan algebra $\mathcal{A}$ is a subalgebra of the general linear Lie algebra $gl(\mathcal{A})$, which is different from the case of Jordan algebras and Jordan superalgebras. As an application, quasicentroids are also ideals of the generalized derivation algebras.

We proceed as follows. In Section \ref{se:2}, we mainly give some basic definitions which will be used in the following. In section \ref{se:3}, we give some construction about ternary Jordan algebras. In Section \ref{se:4}, we develop some elementary properties about generalized derivations, quasiderivations, centroids and quasicentroids of ternary Jordan alegbras, some of which are technical results to be used in the sequel. Section \ref{se:5} is devoted to show that quasiderivations of a ternary Jordan algebra can be embedded as derivations into a lager ternary Jordan algebra. And we obtain a direct sum decomposition of the derivation algebra of the lager ternary Jordan algebra under some conditions. In Section \ref{se:6}, we characterize a class of ternary Jordan algebras satisfying all linear transformations are quasiderivations. We get their dimensions are not more than $2$. In particular, we obtain the necessary conditions for the corresponding dimensions. In Section \ref{se:7}, we turn to centroids of ternary Jordan algebras and prove some properties about them.
\section{Preliminaries}\label{se:2}
\begin{defn}\cite{M1}
An algebra $J$ over a field $\rm{F}$ is a Jordan algebra satisfying for any $x, y \in J$,
\begin{enumerate}[(1)]
\item $x \circ y = y \circ x$;
\item $(x^{2} \circ y) \circ x = x^{2} \circ (y \circ x)$.
\end{enumerate}
\end{defn}
\begin{defn}\cite{IAP}
Let $\mathcal{A}$ be an $n$-ary algebra with a multilinear multiplication $\rm{[\![\cdot,\cdots,\cdot]\!]} : \times^{n}\mathbb{V} \rightarrow \mathbb{V}$, where $\mathbb{V}$ is the underlying vector space. $\mathcal{A}$ is said to be an $n$-ary Jordan algebra if for every $x_{1}, x_{2}, \cdots, x_{n}, y_{2}, \cdots, y_{n} \in \mathbb{V}$,
\begin{enumerate}[(1)]
\item $\rm{[\![\it{x_{\sigma(1)}},\cdots,\it{x_{\sigma(n)}}]\!]} = \rm{[\![\it{x_{1}},\cdots,\it{x_{n}}]\!]}$ for every permutation $\sigma \in \mathcal{S}_{n}$;
\item $[R_{(x_{2}, \cdots, x_{n})}, R_{(y_{2}, \cdots, y_{n})}] \in Der(\mathcal{A})$ where $[\cdot,\cdot]$ stands again for the commutator and $R_{(x_{2}, \cdots, x_{n})}$, $R_{(y_{2}, \cdots, y_{n})}$ are the right multiplication operators, defined in the usual way:
    \[y \mapsto R_{(y_{2}, \cdots, y_{n})}(y) = \rm{[\![\it{y},\it{y_{2}},\cdots,\it{y_{n}}]\!]}.\]
\end{enumerate}
\end{defn}
\begin{defn}
Let $\mathcal{A}$ be a ternary Jordan algebra. A subspace $\mathcal{I}$ of $\mathcal{A}$ is called an ideal of $\mathcal{A}$ if $\rm{[\![\mathcal{I},\mathcal{A},\mathcal{A}]\!]} \subseteq \mathcal{I}$.
\end{defn}
\begin{defn}
Let $\mathcal{A}$ be a ternary Jordan algebra. We define the set $\{z \in \mathcal{A} \mid \rm{[\![\it{x},\it{y},\it{z}]\!]} = 0,\;\forall \it{x, y} \in \mathcal{A}\}$ to be the annihilator of $\mathcal{A}$, denoted by $Z(\mathcal{A})$. It's obvious that $Z(\mathcal{A})$ is an ideal of $\mathcal{A}$.
\end{defn}
\begin{defn}
Suppose that $\mathcal{A}$ is a ternary Jordan algebra over $\rm{F}$ and $f : \mathcal{A} \times \mathcal{A} \rightarrow \rm{F}$ is a bilinear form on $\mathcal{A}$. If for any $x, y, z , w \in \mathcal{A}$, $f$ satisifies
\[f(\rm{[\![\it{x},\it{y},\it{z}]\!]}, \it{w}) = f(\it{x}, \rm{[\![\it{w},\it{y},\it{z}]\!]}),\]
then $f$ is called invariant.
\end{defn}
\begin{defn}
Let $\mathcal{A}$ be a ternary Jordan algebra, $\it{f}$ be a linear map of $\mathcal{A}$. If $\it{f}$ satisfies $\it{f}(\mathcal{A}) \subseteq Z(\mathcal{A})$ and $\it{f}(\rm{[\![\mathcal{A},\mathcal{A},\mathcal{A}]\!]}) = 0$, then $\it{f}$ is called a center derivation of $\mathcal{A}$. The set of all center derivations is denoted by $ZDer(\mathcal{A})$, which is an ideal of $Der(\mathcal{A})$.
\end{defn}
\begin{defn}
Let $\mathcal{A}$ be a ternary Jordan algebra and $\it{f}_{1}$ a linear map on $\mathcal{A}$. If there exist linear maps $\it{f}_{2}$, $\it{f}_{3}$, $\it{f}^{'}$ on $\mathcal{A}$ satisfy for all $x, y, z \in \mathcal{A}$,
\[\rm{[\![\it{f}_{1}(x),\it{y},\it{z}]\!]} + \rm{[\![\it{x},\it{f}_{2}(y),\it{z}]\!]} + \rm{[\![\it{x},\it{y},\it{f}_{3}(z)]\!]} = \it{f}^{'}(\rm{[\![\it{x},\it{y},\it{z}]\!]}),\]
then $\it{f}_{1}$ is called a generalized derivation of $\mathcal{A}$. The set of all quaternions $(\it{f}_{1},\it{f}_{2},\it{f}_{3},\it{f}^{'})$ is denoted by $\Delta(\mathcal{A})$, and the set of all generalized derivations is denoted by $GDer(\mathcal{A})$.
\end{defn}
\begin{re}
\begin{enumerate}[(1)]
\item According to the symmetry of the multiplication, if $(\it{f}_{1},\it{f}_{2},\it{f}_{3},\it{f}^{'}) \in \Delta(\mathcal{A})$, then
\[\rm{[\![\it{f}_{2}(x),\it{y},\it{z}]\!]} + \rm{[\![\it{x},\it{f}_{1}(y),\it{z}]\!]} + \rm{[\![\it{x},\it{y},\it{f}_{3}(z)]\!]} = \it{f}^{'}(\rm{[\![\it{x},\it{y},\it{z}]\!]}).\]
It follows that $\it{f}_{1}, \it{f}_{2}, \it{f}_{3} \in GDer(\mathcal{A})$ and $(\it{f}_{i_{1}},\it{f}_{i_{2}},\it{f}_{i_{3}},\it{f}^{'}) \in \Delta(\mathcal{A})$, where $(i_{1},i_{2},i_{3})$ is any $3$-ary permutation.
\item In the case of $\it{f}_{1} = \it{f}_{2} = \it{f}_{3} = \it{f}$, the linear map $\it{f}$ is called a quasiderivation of $\mathcal{A}$, and the set of all quasiderivations is denoted by $QDer(\mathcal{A})$.
\item For any ternary Jordan algebra $\mathcal{A}$, we have $ZDer(\mathcal{A}) \subseteq Der(\mathcal{A}) \subseteq QDer(\mathcal{A}) \subseteq GDer(\mathcal{A})$.
\end{enumerate}
\end{re}
\begin{defn}
Let $\mathcal{A}$ be a ternary Jordan algebra. The centroid of $\mathcal{A}$ is a vector space spanned by all elements $\it{f} \in \rm{End}(\mathcal{A})$ which satisfies for all $x, y, z \in \mathcal{A}$
\[\rm{[\![\it{f}(x),\it{y},\it{z}]\!]} = \rm{[\![\it{x},\it{f}(y),\it{z}]\!]} = \rm{[\![\it{x},\it{y},\it{f}(z)]\!]} = \it{f}(\rm{[\![\it{x},\it{y},\it{z}]\!]}),\]
denoted by $\Gamma(\mathcal{A})$.
\end{defn}
\begin{defn}
Let $\mathcal{A}$ be a ternary Jordan algebra. The quasicentroid of $\mathcal{A}$ is a vector space spanned by all elements $\it{f} \in \rm{End}(\mathcal{A})$ which satisfies for all $x, y, z \in \mathcal{A}$
\[\rm{[\![\it{f}(x),\it{y},\it{z}]\!]} = \rm{[\![\it{x},\it{f}(y),\it{z}]\!]} = \rm{[\![\it{x},\it{y},\it{f}(z)]\!]},\]
denoted by $Q\Gamma(\mathcal{A})$.
\end{defn}

Obviously, $\Gamma(\mathcal{A}) \subseteq Q\Gamma(\mathcal{A})$.
\section{Some construction of ternary Jordan algebras}\label{se:3}
\begin{prop}\label{prop:3.1}
Suppose that $J$ is a Jordan algebra over $\rm{F}$ and $\alpha \in J^{*}$ satisfies $\alpha(J \circ J) = 0$. Define $\rm{[\![\cdot,\cdot,\cdot]\!]} : \it{J \times J \times J \rightarrow J}$ by
\[\rm{[\![\it{x},\it{y},\it{z}]\!]} = \it{\alpha(x)y \circ z} + \it{\alpha(y)z \circ x} + \it{\alpha(z)x \circ y},\quad\forall x, y, z \in J.\]
Then $(J, \rm{[\![\cdot,\cdot,\cdot]\!]})$ is a ternary Jordan algebra, denoted by $J_{\alpha}$.
\end{prop}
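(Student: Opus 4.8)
The plan is to verify directly the two defining conditions of a ternary Jordan algebra for the product $[\![x,y,z]\!]=\alpha(x)\,y\circ z+\alpha(y)\,z\circ x+\alpha(z)\,x\circ y$. Trilinearity over the base field is immediate from the formula. For the symmetry condition, since $\mathcal{S}_3$ is generated by a $3$-cycle and a transposition, it suffices to note that the defining expression is literally a cyclic sum in $x,y,z$ (hence invariant under cyclic permutations) and that interchanging $y$ with $z$ leaves it unchanged because $\circ$ is commutative; so $[\![\cdot,\cdot,\cdot]\!]$ is symmetric under all of $\mathcal{S}_3$. This part is routine.

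The real content is the second condition: for all $x_2,x_3,y_2,y_3\in J$, the commutator $D:=[R_{(x_2,x_3)},R_{(y_2,y_3)}]$ must lie in $Der(J_\alpha)$. First I would write the right multiplication explicitly,
\[R_{(x_2,x_3)}(w)=[\![w,x_2,x_3]\!]=\alpha(w)\,(x_2\circ x_3)+\alpha(x_2)\,(x_3\circ w)+\alpha(x_3)\,(x_2\circ w),\]
so that $R_{(x_2,x_3)}=P_x+T_x$, where $T_x:=\alpha(x_2)L_{x_3}+\alpha(x_3)L_{x_2}$ ($L_p$ being multiplication by $p$ in $J$) and $P_x\colon w\mapsto\alpha(w)\,(x_2\circ x_3)$. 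The hypothesis $\alpha(J\circ J)=0$ now enters in three ways: $\alpha(x_2\circ x_3)=0$; $\alpha\circ T_x=\alpha\circ P_x=0$, since every product $p\circ q$ lies in $J\circ J$; and, applied to the triple product itself, $[\![J_\alpha,J_\alpha,J_\alpha]\!]\subseteq J\circ J\subseteq\ker\alpha$. Expanding $[P_x+T_x,\,P_y+T_y]$ and discarding the summands killed by $\alpha(J\circ J)=0$, the computation collapses to $D=[T_x,T_y]+Q$, where $Q$ is the rank-one operator $w\mapsto\alpha(w)\,v$ with $v:=T_x(y_2\circ y_3)-T_y(x_2\circ x_3)\in J\circ J$.

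Proving that $D$ is a ternary derivation is the step I expect to be the main obstacle, and I would treat the two summands of $D$ separately. For $[T_x,T_y]$: it is an $\mathrm{F}$-linear combination of commutators $[L_p,L_q]$ of multiplication operators of the Jordan algebra $J$, and by the classical fact that each such commutator is an (inner) derivation of $J$ one gets $[T_x,T_y]\in Der(J)$, with image in $J\circ J\subseteq\ker\alpha$; a short direct check — apply a derivation $\delta$ of $J$ to $[\![x,y,z]\!]$, use $\delta(p\circ q)=\delta(p)\circ q+p\circ\delta(q)$, and invoke $\alpha\circ\delta=0$ to cancel the stray terms $\alpha(\delta(x))\,y\circ z+\alpha(\delta(y))\,z\circ x+\alpha(\delta(z))\,x\circ y$ — shows that every $\delta\in Der(J)$ annihilated by $\alpha$ is automatically a derivation of $J_\alpha$, so $[T_x,T_y]$ is. The delicate piece is the rank-one correction $Q$: since $v\in J\circ J$ we have $\alpha(v)=0$ and $Q$ annihilates $[\![J_\alpha,J_\alpha,J_\alpha]\!]$, so $Q([\![x,y,z]\!])=0$, and one is left to expand $[\![Q(x),y,z]\!]+[\![x,Q(y),z]\!]+[\![x,y,Q(z)]\!]$ and verify that it vanishes as well; this is where the bulk of the bookkeeping lies and where $\alpha(J\circ J)=0$, together with commutativity of $\circ$, must be used most carefully. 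Once both summands are recognized as derivations of $J_\alpha$, the second axiom holds and $(J,[\![\cdot,\cdot,\cdot]\!])$ is a ternary Jordan algebra.
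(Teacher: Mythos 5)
Your reduction is correct and clean right up to the last step: the splitting $R_{(x_2,x_3)}=P_x+T_x$, the computation that $[P_x,P_y]=0$ and that the cross terms collapse to the rank-one operator $Q\colon w\mapsto\alpha(w)v$ with $v=T_x(y_2\circ y_3)-T_y(x_2\circ x_3)\in J\circ J$, and the verification that $[T_x,T_y]$ is a derivation of $J_\alpha$ (being a derivation of $J$ whose image lies in $\ker\alpha$) are all right. But the step you defer as ``bookkeeping'' is exactly where the argument breaks. Since $\alpha(v)=0$, a direct expansion gives
\[
[\![Q(x),y,z]\!]+[\![x,Q(y),z]\!]+[\![x,y,Q(z)]\!]=2\bigl(\alpha(x)\alpha(y)\,(v\circ z)+\alpha(x)\alpha(z)\,(v\circ y)+\alpha(y)\alpha(z)\,(v\circ x)\bigr),
\]
and taking $x=y=z$ with $\alpha(x)=1$ shows (in characteristic $\neq 2,3$) that this vanishes for all $x,y,z$ only if $v\circ J=0$. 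There is no reason for $v=T_x(y_2\circ y_3)-T_y(x_2\circ x_3)$ to annihilate $J$, so $Q$, and hence $D$, need not be a derivation of $J_\alpha$. Concretely, take $J=t\,\mathrm{F}[t]/\langle t^{7}\rangle$ (commutative associative, hence Jordan), $\alpha$ the coefficient of $t$, and $x_2=x_3=t$, $y_2=y_3=t^{2}$: then $D(w)=\pm 2\alpha(w)t^{5}$, so $D([\![t,t,t]\!])=0$ while $[\![D(t),t,t]\!]+[\![t,D(t),t]\!]+[\![t,t,D(t)]\!]=\pm 12t^{6}\neq 0$.

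So this is not a gap you could have closed by more careful bookkeeping: the vanishing you would need is false, and the proposition as stated requires an additional hypothesis (for instance $J\circ(J\circ(J\circ J))=0$, which forces every such $v$ into the annihilator of $J$). It is worth noting that your decomposition pinpoints where the paper's own proof goes astray: in its expansion of $[\![D(u),v,w]\!]+[\![u,D(v),w]\!]+[\![u,v,D(w)]\!]$ it retains only the terms contributed by $T_x$ and $T_y$ (those of the shape $((u\circ x_i)\circ y_j)\circ v$) and silently drops the terms produced by the summand $\alpha(u)\,(x_1\circ x_2)$ of $R_{(x_1,x_2)}(u)$ --- precisely the contribution of your operator $Q$.
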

\begin{proof}
For any $u, v, w, x_{1}, x_{2}, y_{1}, y_{2} \in J$, we have
\begin{align*}
&D_{(x_{1}, x_{2}), (y_{1}, y_{2})}(\rm{[\![\it{u},\it{v},\it{w}]\!]})\\
&= \alpha(w)\alpha(x_{1})\alpha(y_{1})((u \circ v) \circ x_{2}) \circ y_{2} + \alpha(w)\alpha(x_{1})\alpha(y_{2})((u \circ v) \circ x_{2}) \circ y_{1}\\
&+ \alpha(w)\alpha(x_{2})\alpha(y_{1})((u \circ v) \circ x_{1}) \circ y_{2} + \alpha(w)\alpha(x_{2})\alpha(y_{2})((u \circ v) \circ x_{1}) \circ y_{1}\\
&+ \alpha(v)\alpha(x_{1})\alpha(y_{1})((w \circ u) \circ x_{2}) \circ y_{2} + \alpha(v)\alpha(x_{1})\alpha(y_{2})((w \circ u) \circ x_{2}) \circ y_{1}\\
&+ \alpha(v)\alpha(x_{2})\alpha(y_{1})((w \circ u) \circ x_{1}) \circ y_{2} + \alpha(v)\alpha(x_{2})\alpha(y_{2})((w \circ u) \circ x_{1}) \circ y_{1}\\
&+ \alpha(u)\alpha(x_{1})\alpha(y_{1})((v \circ w) \circ x_{2}) \circ y_{2} + \alpha(u)\alpha(x_{1})\alpha(y_{2})((v \circ w) \circ x_{2}) \circ y_{1}\\
&+ \alpha(u)\alpha(x_{2})\alpha(y_{1})((v \circ w) \circ x_{1}) \circ y_{2} + \alpha(u)\alpha(x_{2})\alpha(y_{2})((v \circ w) \circ x_{1}) \circ y_{1}\\
&- \alpha(w)\alpha(y_{1})\alpha(x_{1})((u \circ v) \circ y_{2}) \circ x_{2} - \alpha(w)\alpha(y_{1})\alpha(x_{2})((u \circ v) \circ y_{2}) \circ x_{1}\\
&- \alpha(w)\alpha(y_{2})\alpha(x_{1})((u \circ v) \circ y_{1}) \circ x_{2} - \alpha(w)\alpha(y_{2})\alpha(x_{2})((u \circ v) \circ y_{1}) \circ x_{1}\\
&- \alpha(v)\alpha(y_{1})\alpha(x_{1})((w \circ u) \circ y_{2}) \circ x_{2} - \alpha(v)\alpha(y_{1})\alpha(x_{2})((w \circ u) \circ y_{2}) \circ x_{1}\\
&- \alpha(v)\alpha(y_{2})\alpha(x_{1})((w \circ u) \circ y_{1}) \circ x_{2} - \alpha(v)\alpha(y_{2})\alpha(x_{2})((w \circ u) \circ y_{1}) \circ x_{1}\\
&- \alpha(u)\alpha(y_{1})\alpha(x_{1})((v \circ w) \circ y_{2}) \circ x_{2} - \alpha(u)\alpha(y_{1})\alpha(x_{2})((v \circ w) \circ y_{2}) \circ x_{1}\\
&- \alpha(u)\alpha(y_{2})\alpha(x_{1})((v \circ w) \circ y_{1}) \circ x_{2} - \alpha(u)\alpha(y_{2})\alpha(x_{2})((v \circ w) \circ y_{1}) \circ x_{1},
\end{align*}
on the other hand,
\begin{align*}
&\rm{[\![\it{D_{(x_{1}, x_{2}), (y_{1}, y_{2})}(u)},\it{v},\it{w}]\!]} + \rm{[\![\it{u},\it{D_{(x_{1}, x_{2}), (y_{1}, y_{2})}(v)},\it{w}]\!]} + \rm{[\![\it{u},\it{v},\it{D_{(x_{1}, x_{2}), (y_{1}, y_{2})}(w)}]\!]}\\
&= \alpha(x_{1})\alpha(y_{1})\alpha(w)((u \circ x_{2}) \circ y_{2}) \circ v + \alpha(x_{1})\alpha(y_{1})\alpha(v)((u \circ x_{2}) \circ y_{2}) \circ w\\
&+ \alpha(x_{1})\alpha(y_{2})\alpha(w)((u \circ x_{2}) \circ y_{1}) \circ v + \alpha(x_{1})\alpha(y_{2})\alpha(v)((u \circ x_{2}) \circ y_{1}) \circ w\\
&+ \alpha(x_{2})\alpha(y_{1})\alpha(w)((u \circ x_{1}) \circ y_{2}) \circ v + \alpha(x_{2})\alpha(y_{1})\alpha(v)((u \circ x_{1}) \circ y_{2}) \circ w\\
&+ \alpha(x_{2})\alpha(y_{2})\alpha(w)((u \circ x_{1}) \circ y_{1}) \circ v + \alpha(x_{2})\alpha(y_{2})\alpha(v)((u \circ x_{1}) \circ y_{1}) \circ w\\
&- \alpha(y_{1})\alpha(x_{1})\alpha(w)((u \circ y_{2}) \circ x_{2}) \circ v - \alpha(y_{1})\alpha(x_{1})\alpha(v)((u \circ y_{2}) \circ x_{2}) \circ w\\
&- \alpha(y_{1})\alpha(x_{2})\alpha(w)((u \circ y_{2}) \circ x_{1}) \circ v - \alpha(y_{1})\alpha(x_{2})\alpha(v)((u \circ y_{2}) \circ x_{1}) \circ w\\
&- \alpha(y_{2})\alpha(x_{1})\alpha(w)((u \circ y_{1}) \circ x_{2}) \circ v - \alpha(y_{2})\alpha(x_{1})\alpha(v)((u \circ y_{1}) \circ x_{2}) \circ w\\
&- \alpha(y_{2})\alpha(x_{2})\alpha(w)((u \circ y_{1}) \circ x_{1}) \circ v - \alpha(y_{2})\alpha(x_{2})\alpha(v)((u \circ y_{1}) \circ x_{1}) \circ w\\
&+ \alpha(x_{1})\alpha(y_{1})\alpha(u)((v \circ x_{2}) \circ y_{2}) \circ w + \alpha(x_{1})\alpha(y_{1})\alpha(w)((v \circ x_{2}) \circ y_{2}) \circ u\\
&+ \alpha(x_{1})\alpha(y_{2})\alpha(u)((v \circ x_{2}) \circ y_{1}) \circ w + \alpha(x_{1})\alpha(y_{2})\alpha(w)((v \circ x_{2}) \circ y_{1}) \circ u\\
&+ \alpha(x_{2})\alpha(y_{1})\alpha(u)((v \circ x_{1}) \circ y_{2}) \circ w + \alpha(x_{2})\alpha(y_{1})\alpha(w)((v \circ x_{1}) \circ y_{2}) \circ u\\
&+ \alpha(x_{2})\alpha(y_{2})\alpha(u)((v \circ x_{1}) \circ y_{1}) \circ w + \alpha(x_{2})\alpha(y_{2})\alpha(w)((v \circ x_{1}) \circ y_{1}) \circ u\\
&- \alpha(y_{1})\alpha(x_{1})\alpha(u)((v \circ y_{2}) \circ x_{2}) \circ w - \alpha(y_{1})\alpha(x_{1})\alpha(w)((v \circ y_{2}) \circ x_{2}) \circ u\\
&- \alpha(y_{1})\alpha(x_{2})\alpha(u)((v \circ y_{2}) \circ x_{1}) \circ w - \alpha(y_{1})\alpha(x_{2})\alpha(w)((v \circ y_{2}) \circ x_{1}) \circ u\\
&- \alpha(y_{2})\alpha(x_{1})\alpha(u)((v \circ y_{1}) \circ x_{2}) \circ w - \alpha(y_{2})\alpha(x_{1})\alpha(w)((v \circ y_{1}) \circ x_{2}) \circ u\\
&- \alpha(y_{2})\alpha(x_{2})\alpha(u)((v \circ y_{1}) \circ x_{1}) \circ w - \alpha(y_{2})\alpha(x_{2})\alpha(w)((v \circ y_{1}) \circ x_{1}) \circ u\\
&+ \alpha(x_{1})\alpha(y_{1})\alpha(u)((w \circ x_{2}) \circ y_{2}) \circ v + \alpha(x_{1})\alpha(y_{1})\alpha(v)((w \circ x_{2}) \circ y_{2}) \circ u\\
&+ \alpha(x_{1})\alpha(y_{2})\alpha(u)((w \circ x_{2}) \circ y_{1}) \circ v + \alpha(x_{1})\alpha(y_{2})\alpha(v)((w \circ x_{2}) \circ y_{1}) \circ u\\
&+ \alpha(x_{2})\alpha(y_{1})\alpha(u)((w \circ x_{1}) \circ y_{2}) \circ v + \alpha(x_{2})\alpha(y_{1})\alpha(v)((w \circ x_{1}) \circ y_{2}) \circ u\\
&+ \alpha(x_{2})\alpha(y_{2})\alpha(u)((w \circ x_{1}) \circ y_{1}) \circ v + \alpha(x_{2})\alpha(y_{2})\alpha(v)((w \circ x_{1}) \circ y_{1}) \circ u\\
&- \alpha(y_{1})\alpha(x_{1})\alpha(u)((w \circ y_{2}) \circ x_{2}) \circ v - \alpha(y_{1})\alpha(x_{1})\alpha(v)((w \circ y_{2}) \circ x_{2}) \circ u\\
&- \alpha(y_{1})\alpha(x_{2})\alpha(u)((w \circ y_{2}) \circ x_{1}) \circ v - \alpha(y_{1})\alpha(x_{2})\alpha(v)((w \circ y_{2}) \circ x_{1}) \circ u\\
&- \alpha(y_{2})\alpha(x_{1})\alpha(u)((w \circ y_{1}) \circ x_{2}) \circ v - \alpha(y_{2})\alpha(x_{1})\alpha(v)((w \circ y_{1}) \circ x_{2}) \circ u\\
&- \alpha(y_{2})\alpha(x_{2})\alpha(u)((w \circ y_{1}) \circ x_{1}) \circ v - \alpha(y_{2})\alpha(x_{2})\alpha(v)((w \circ y_{1}) \circ x_{1}) \circ u.
\end{align*}

Since $J$ is a Jordan algebra, we have $[R_{x}, R_{y}] \in Der(J)$, that is to say for any $u, v \in J$
\begin{equation}\label{eq:3.1}
R_{x}R_{y}(u \circ v) - R_{y}R_{x}(u \circ v) = R_{x}R_{y}(u) \circ v - R_{y}R_{x}(u) \circ v + R_{x}R_{y}(v) \circ u - R_{y}R_{x}(v) \circ u.\tag{3.1}
\end{equation}

Using (\ref{eq:3.1}), we have
\begin{align*}
&D_{(x_{1}, x_{2}), (y_{1}, y_{2})}(\rm{[\![\it{u},\it{v},\it{w}]\!]}) = \rm{[\![\it{D_{(x_{1}, x_{2}), (y_{1}, y_{2})}(u)},\it{v},\it{w}]\!]} + \rm{[\![\it{u},\it{D_{(x_{1}, x_{2}), (y_{1}, y_{2})}(v)},\it{w}]\!]}\\
&+ \rm{[\![\it{u},\it{v},\it{D_{(x_{1}, x_{2}), (y_{1}, y_{2})}(w)}]\!]},
\end{align*}
which implies that $D_{(x_{1}, x_{2}), (y_{1}, y_{2})} \in Der(J_{\alpha})$. Therefore, $(J, \rm{[\![\cdot,\cdot,\cdot]\!]})$ is a ternary Jordan algebra.
\end{proof}
\begin{prop}\label{prop:3.2}
Suppose that $\mathcal{A}$ is a ternary Jordan algebra and $z_{0} \in \mathcal{A}$ satisfies $R_{(z_{0}, z_{0})} = 0$. Define $\circ : \mathcal{A} \times \mathcal{A} \rightarrow \mathcal{A}$ by
\[x \circ y = \rm{[\![\it{x},\it{y},\it{z_{0}}]\!]},\quad\forall \it{x}, \it{y} \in \mathcal{A},\]
then $J = (\mathcal{A}, \circ)$ is a Jordan algebra with $z_{0} \in C(J)$, the center of $J$.
\end{prop}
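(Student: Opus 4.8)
The plan is to check that $(J,\circ)$ satisfies the two defining conditions of a Jordan algebra and then to locate $z_0$ in the center. Commutativity of $\circ$ is immediate from the symmetry of the ternary product: $x\circ y=[\![x,y,z_0]\!]=[\![y,x,z_0]\!]=y\circ x$. The one consequence of the hypothesis $R_{(z_0,z_0)}=0$ that I will use repeatedly is that $[\![z_0,c,z_0]\!]=[\![c,z_0,z_0]\!]=R_{(z_0,z_0)}(c)=0$ for every $c\in\mathcal{A}$; equivalently $z_0\circ c=0$ for all $c$, i.e. $z_0$ annihilates $J$. Since then all products and associators of $J$ involving $z_0$ vanish, we get $z_0\in C(J)$, which settles the last assertion.

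For the Jordan identity I would pass to multiplication operators. Because $R^{J}_{a}(c):=c\circ a=[\![c,a,z_0]\!]=R_{(a,z_0)}(c)$, multiplication by $a$ in $J$ is exactly the ternary operator $R_{(a,z_0)}$. Hence for all $a,b\in\mathcal{A}$, axiom (2) for $\mathcal{A}$ gives $[R^{J}_{a},R^{J}_{b}]=[R_{(a,z_0)},R_{(b,z_0)}]\in Der(\mathcal{A})$, and this derivation fixes $z_0$, since $[R_{(a,z_0)},R_{(b,z_0)}](z_0)=[\![[\![z_0,b,z_0]\!],a,z_0]\!]-[\![[\![z_0,a,z_0]\!],b,z_0]\!]=0$ by the identity above. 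A derivation $D$ of $\mathcal{A}$ with $D(z_0)=0$ is automatically a derivation of $J$, because $D(u\circ v)=D([\![u,v,z_0]\!])=[\![Du,v,z_0]\!]+[\![u,Dv,z_0]\!]+[\![u,v,Dz_0]\!]=Du\circ v+u\circ Dv$. Thus $[R^{J}_{a},R^{J}_{b}]\in Der(J)$ for all $a,b\in J$.

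So $J$ is a commutative algebra in which every commutator of multiplication operators is a derivation; it remains to conclude $(x^{2}\circ y)\circ x=x^{2}\circ(y\circ x)$, equivalently $[R^{J}_{x},R^{J}_{x^{2}}]=0$ where $x^{2}=x\circ x=[\![x,x,z_0]\!]$. This is the $n=2$ instance of the $n$-ary Jordan algebra axioms, i.e. the classical characterization of Jordan algebras among commutative algebras, which I would invoke (see \cite{IAP}) rather than reprove. This last implication is where the real content lies: merely knowing $[R^{J}_{x},R^{J}_{x^{2}}]$ is a derivation of $\mathcal{A}$ fixing $z_0$ does not force it to vanish, so one genuinely needs the criterion that a commutative algebra is Jordan precisely when all such commutators are derivations — or, failing that, a longer direct manipulation of the iterated products $[\![[\![[\![x,x,z_0]\!],y,z_0]\!],x,z_0]\!]$ and $[\![[\![x,x,z_0]\!],[\![y,x,z_0]\!],z_0]\!]$ using the derivation rule for the operators $[R_{(a,z_0)},R_{(b,z_0)}]$. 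Everything else — commutativity, the descent of derivations from $\mathcal{A}$ to $J$, and $z_0\in C(J)$ — is routine once $[\![z_0,c,z_0]\!]=0$ is available.
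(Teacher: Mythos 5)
Your proposal is correct and follows essentially the same route as the paper: both arguments reduce everything to showing that each commutator $[R^{J}_{x},R^{J}_{y}]=[R_{(x,z_0)},R_{(y,z_0)}]$ is a derivation of $(\mathcal{A},\circ)$, using axiom (2) of the ternary Jordan structure together with the vanishing $[\![z_0,c,z_0]\!]=[\![c,z_0,z_0]\!]=0$, and then conclude that $J$ is Jordan from the fact that all multiplication-operator commutators of the commutative algebra $J$ are derivations. Your packaging (identifying $R^{J}_{a}$ with $R_{(a,z_0)}$ and observing that a derivation of $\mathcal{A}$ fixing $z_0$ descends to $J$) is a tidier version of the paper's explicit expansion of $[R_x,R_y](u\circ v)$, and you are more candid than the paper in flagging that the final passage from ``all $[R^{J}_{x},R^{J}_{y}]$ are derivations'' to the identity $(x^{2}\circ y)\circ x=x^{2}\circ(y\circ x)$ is being quoted rather than proved --- the paper makes exactly the same jump without comment.
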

\begin{proof}
For any $x, y, u , v\in \mathcal{A}$, we have
\begin{align*}
&[R_{x}, R_{y}](u \circ v) = \rm{[\![\rm{[\![\rm{[\![\it{u},\it{v},\it{z_{0}}]\!]},\it{x},\it{z_{0}}]\!]},\it{y},\it{z_{0}}]\!]} - \rm{[\![\rm{[\![\rm{[\![\it{u},\it{v},\it{z_{0}}]\!]},\it{y},\it{z_{0}}]\!]},\it{x},\it{z_{0}}]\!]}\\
&= \rm{[\![\rm{[\![\rm{[\![\it{u},\it{x},\it{z_{0}}]\!]},\it{y},\it{z_{0}}]\!]},\it{v},\it{z_{0}}]\!]} - \rm{[\![\rm{[\![\rm{[\![\it{u},\it{y},\it{z_{0}}]\!]},\it{x},\it{z_{0}}]\!]},\it{v},\it{z_{0}}]\!]} + \rm{[\![\it{u},\rm{[\![\rm{[\![\it{v},\it{x},\it{z_{0}}]\!]},\it{y},\it{z_{0}}]\!]},\it{z_{0}}]\!]}\\
&- \rm{[\![\it{u},\rm{[\![\rm{[\![\it{v},\it{y},\it{z_{0}}]\!]},\it{x},\it{z_{0}}]\!]},\it{z_{0}}]\!]} + \rm{[\![\it{u},\it{v},\rm{[\![\rm{[\![\it{z_{0}},\it{x},\it{z_{0}}]\!]},\it{y},\it{z_{0}}]\!]}]\!]} - \rm{[\![\it{u},\it{v},\rm{[\![\rm{[\![\it{z_{0}},\it{y},\it{z_{0}}]\!]},\it{x},\it{z_{0}}]\!]}]\!]}\\
&= \rm{[\![\rm{[\![\rm{[\![\it{u},\it{x},\it{z_{0}}]\!]},\it{y},\it{z_{0}}]\!]},\it{v},\it{z_{0}}]\!]} - \rm{[\![\rm{[\![\rm{[\![\it{u},\it{y},\it{z_{0}}]\!]},\it{x},\it{z_{0}}]\!]},\it{v},\it{z_{0}}]\!]} + \rm{[\![\it{u},\rm{[\![\rm{[\![\it{v},\it{x},\it{z_{0}}]\!]},\it{y},\it{z_{0}}]\!]},\it{z_{0}}]\!]}\\
&- \rm{[\![\it{u},\rm{[\![\rm{[\![\it{v},\it{y},\it{z_{0}}]\!]},\it{x},\it{z_{0}}]\!]},\it{z_{0}}]\!]}\\
&= [R_{x}, R_{y}](u) \circ v + u \circ [R_{x}, R_{y}](v),
\end{align*}
which implies that $[R_{x}, R_{y}] \in Der(J)$, i.e., $J = (\mathcal{A}, \circ)$ is a Jordan algebra. It's obvious that $z_{0} \in C(J)$ since $z_{0} \circ x  = 0$ for any $x \in \mathcal{A}$.
\end{proof}
\begin{lem}\label{le:3.3}
Let $\mathscr{A}$ be a commutative associative algebra over $\rm{F}$ with $char \rm{F} \neq 2$ and $\Delta \in Der(\mathscr{A})$. Suppose that $\omega$ is an involution on $\mathscr{A}$. Define two multiplications $\circ_{\Delta}$, $\circ_{\omega}$ from $\mathscr{A} \times \mathscr{A}$ to $\mathscr{A}$ by
\[a \circ_{\Delta} b = a\Delta(b) + b\Delta(a),\quad a \circ_{\omega} b = a\omega(b) + b\omega(a),\quad\forall a, b \in \mathscr{A}.\]
Then
\begin{enumerate}[(1)]
\item $(\mathscr{A}, \circ_{\Delta})$ is a Jordan algebra if and only if for any $a, b \in \mathscr{A}$, $\Delta$ satisfies
      \begin{equation}\label{eq:3.2}
       \Delta(ba^{2}\Delta^{2}(a)) = 0.\tag{3.2}
      \end{equation}
\item $(\mathscr{A}, \circ_{\omega})$ is a Jordan algebra if and only if for any $a, b \in \mathscr{A}$, $\omega$ satisfies
      \begin{equation}\label{eq:3.3}
       (a - \omega(a))(b - \omega(b)) = 0.\tag{3.3}
      \end{equation}
\end{enumerate}
\end{lem}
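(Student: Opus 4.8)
Since both products $\circ_{\Delta}$ and $\circ_{\omega}$ are symmetric in their two arguments, commutativity of $(\mathscr{A},\circ_{\Delta})$ and of $(\mathscr{A},\circ_{\omega})$ is automatic, so in each case the only thing to verify is the Jordan identity $(a^{2}\circ b)\circ a=a^{2}\circ(b\circ a)$, where $a^{2}:=a\circ a$. The plan is the same for both parts: expand the two sides of this identity, rewrite everything in terms of the underlying commutative associative product of $\mathscr{A}$ using the Leibniz rule $\Delta(xy)=x\Delta(y)+y\Delta(x)$ (resp. the axioms $\omega^{2}=\mathrm{id}$ and $\omega(xy)=\omega(x)\omega(y)$), and then read off when the difference of the two sides vanishes.

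For (1), one first computes $a\circ_{\Delta}a=2a\Delta(a)$ and then, applying the Leibniz rule once more, $a^{2}\circ_{\Delta}b=2a\Delta(a)\Delta(b)+2b\Delta(a)^{2}+2ab\Delta^{2}(a)$; expanding $(a^{2}\circ_{\Delta}b)\circ_{\Delta}a$ and $a^{2}\circ_{\Delta}(b\circ_{\Delta}a)$ in the same way and cancelling common terms, I expect to arrive at
\[(a^{2}\circ_{\Delta}b)\circ_{\Delta}a-a^{2}\circ_{\Delta}(b\circ_{\Delta}a)=2\bigl(\Delta(b)a^{2}\Delta^{2}(a)+2ab\,\Delta(a)\Delta^{2}(a)+ba^{2}\Delta^{3}(a)\bigr)=2\,\Delta\bigl(ba^{2}\Delta^{2}(a)\bigr),\]
the last step being one more use of the Leibniz rule. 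Since $\mathrm{char}\,\mathrm{F}\neq2$ and $b$ is arbitrary, the Jordan identity holds for all $a,b$ if and only if $\Delta(ba^{2}\Delta^{2}(a))=0$ for all $a,b$, i.e. (\ref{eq:3.2}).

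For (2), here $a\circ_{\omega}a=2a\omega(a)$, and since $\omega$ fixes every element of the form $x\omega(x)$ (and, more generally, is the identity on the $\omega$-symmetric elements) the expansions are short: $a^{2}\circ_{\omega}b=2a\omega(a)\bigl(b+\omega(b)\bigr)$, hence $(a^{2}\circ_{\omega}b)\circ_{\omega}a=2a\omega(a)\bigl(a+\omega(a)\bigr)\bigl(b+\omega(b)\bigr)$, while $a^{2}\circ_{\omega}(b\circ_{\omega}a)=4a\omega(a)\bigl(b\omega(a)+a\omega(b)\bigr)$. Subtracting and factoring gives
\[(a^{2}\circ_{\omega}b)\circ_{\omega}a-a^{2}\circ_{\omega}(b\circ_{\omega}a)=2a\omega(a)\bigl(a-\omega(a)\bigr)\bigl(b-\omega(b)\bigr),\]
so $(\mathscr{A},\circ_{\omega})$ is a Jordan algebra if and only if $a\omega(a)(a-\omega(a))(b-\omega(b))=0$ for all $a,b$. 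That (\ref{eq:3.3}) implies this is immediate. For the converse I would substitute special elements into this vanishing relation: taking $a=x-\omega(x)$ (so $\omega(a)=-a$) gives $(x-\omega(x))^{3}(y-\omega(y))=0$, and taking $a=1+(x-\omega(x))$ gives $(x-\omega(x))(y-\omega(y))-(x-\omega(x))^{3}(y-\omega(y))=0$; adding these two and using $\mathrm{char}\,\mathrm{F}\neq2$ yields $(x-\omega(x))(y-\omega(y))=0$, which is (\ref{eq:3.3}). (Here one uses the identity element of $\mathscr{A}$.)

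The work in (1) is entirely routine but long: each side of the Jordan identity expands into a sum of a couple of dozen monomials in $a,b,\Delta(a),\Delta(b),\Delta^{2}(a),\Delta^{2}(b),\Delta^{3}(a)$, and the only real task is to keep the coefficients straight so as to see that everything except $2\Delta(ba^{2}\Delta^{2}(a))$ cancels — the sole structural inputs being associativity and commutativity of $\mathscr{A}$ together with the Leibniz rule. The one genuinely non-mechanical point is the ``only if'' direction of (2): once the computation leaves one with $a\omega(a)(a-\omega(a))(b-\omega(b))=0$, one must notice that the unwanted factor $a\omega(a)$ can be stripped off by evaluating at the special elements $x-\omega(x)$ and $1+(x-\omega(x))$ rather than at a generic $a$. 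I expect this to be the main obstacle.
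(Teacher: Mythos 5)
The paper offers no proof to compare against (its proof of Lemma 3.3 is the single line ``It follows from a direct computation''), so the only question is whether your computation is right. Part (1) is: I have checked that
\[(a^{2}\circ_{\Delta}b)\circ_{\Delta}a-a^{2}\circ_{\Delta}(b\circ_{\Delta}a)=2\bigl(\Delta(b)a^{2}\Delta^{2}(a)+2ab\,\Delta(a)\Delta^{2}(a)+ba^{2}\Delta^{3}(a)\bigr)=2\,\Delta\bigl(ba^{2}\Delta^{2}(a)\bigr),\]
and since this difference is \emph{identically} equal to $2\Delta(ba^{2}\Delta^{2}(a))$, the equivalence with (\ref{eq:3.2}) is immediate; part (1) is complete. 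In part (2) the reduction of the Jordan identity to $a\omega(a)(a-\omega(a))(b-\omega(b))=0$ is also correct, and the ``if'' direction follows at once.

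The genuine gap is the one you flagged yourself in parentheses: your ``only if'' argument in (2) evaluates the relation at $a=1+(x-\omega(x))$, and the lemma does not assume $\mathscr{A}$ is unital (indeed the paper's own Example 3.5, meant to illustrate (\ref{eq:3.3}), is a non-unital algebra). This is not a repairable omission: without a unit the ``only if'' direction is false. Take $\mathscr{A}=\mathrm{F}u\oplus\mathrm{F}v$ with $u^{2}=v$, $uv=vu=v^{2}=0$ and $\omega(u)=-u$, $\omega(v)=v$; this is a commutative associative algebra and $\omega$ is an algebra involution. Here $a\omega(a)(a-\omega(a))(b-\omega(b))$ always contains the factor $u^{3}=vu=0$, so it vanishes identically and $(\mathscr{A},\circ_{\omega})$ is a Jordan algebra (in fact $x\circ_{\omega}x\in\mathrm{F}v$ and $v\circ_{\omega}\mathscr{A}=0$, so both sides of the Jordan identity are zero), yet $(u-\omega(u))^{2}=4v\neq0$, violating (\ref{eq:3.3}). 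So your proof establishes (1) in full, and (2) only under the additional hypothesis that $\mathscr{A}$ is unital; in the stated generality the ``only if'' half of (2) cannot be proved because it is not true. The honest conclusion is that your computation is the intended ``direct computation'' and that it exposes a missing unitality hypothesis in the lemma itself.
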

\begin{proof}
It follows from a direct computation.
\end{proof}
\begin{ex}
Let $\mathscr{A}$ denote the algebra $\rm{F}[\it{t}]/\langle{t}^{3}\rangle$ over $\rm{F}$ where $char \rm{F} = 3$. For convenience, let $t^{i}$ denote the congruence class of $t^{i}$ in $\rm{F}[\it{t}]/\langle\it{t}^{3}\rangle$. Then $\mathscr{A}$ has a basis $\{\rm{1}, \it{t}, \it{t}^{2}\}$. Define $D : \mathscr{A} \rightarrow \mathscr{A}$ to be a linear map by
$$D(z) =
\left\{
\begin{aligned}
a_{0}\rm{1} + \it{a_{1}t} + \it{a_{2}t^{2}},\quad \it{z} = \rm{1},\\
b_{0}\rm{1} + \it{b_{1}t} + \it{b_{2}t^{2}},\quad \it{z} = \it{t},\\
c_{0}\rm{1} + \it{c_{1}t} + \it{c_{2}t^{2}},\quad \it{z} = \it{t^{2}}.
\end{aligned}
\right.
$$
Then while
$$
\left\{
\begin{aligned}
a_{0} = a_{1} = a_{2} = 0,\\
b_{0} + c_{0} = b_{1} + c_{1} = b_{2} + c_{2} = 0,\\
b_{0} = 2a_{2}, b_{1} = 2a_{0}, b_{2} = 2a_{1},\\
a_{0} = 2b_{1}, a_{1} = 2b_{2}, a_{2} = 2b_{0},
\end{aligned}
\right.
$$
$D \in Der(\mathscr{A})$. Note that $char \rm{F} = 3$, we get $b_{2} = -2b_{0} = -2b_{1}$, $a_{0} = a_{2} = -2a_{1}$, $a_{1} = -b_{0}$. Moreover, we get $D^{2}(x) = 0$ for any $x \in \mathscr{A}$, which implies that $D$ satisfies (\ref{eq:3.2}).
\end{ex}
\begin{ex}
Let $\mathscr{A}$ denote the commutative associative algebra with a basis $\{e, u\}$ over $\rm{F}$ where $char \rm{F} \neq 2$. And the multiplication table is
\[ee = e, eu = ue = e, uu = 0.\]
Define $\omega$ to be a linear map by
$$\omega(z) =
\left\{
\begin{aligned}
e,\quad z = e,\\
0,\quad z = u.
\end{aligned}
\right.
$$
One can verify that $\omega$ is an involution on $\mathscr{A}$ satisfying (\ref{eq:3.3}).
\end{ex}
\begin{prop}
Let $\mathscr{A}$ be a commutative associative algebra over $\rm{F}$ with $char \rm{F} \neq 2$ and $\Delta$ a derivation of $\mathscr{A}$ satisfying (\ref{eq:3.2}). Suppose that $\omega$ is an involution on $\mathscr{A}$ satisfying (\ref{eq:3.3}). If $\alpha, \beta \in \mathscr{A}^{*}$ satisfy
\[\alpha(ab) = 0,\;\beta\Delta(ab) = 0,\quad\forall a, b \in \mathscr{A},\]
then $(\mathscr{A}, \rm{[\![\cdot,\cdot,\cdot]\!]}_{\alpha, \omega})$ and $(\mathscr{A}, \rm{[\![\cdot,\cdot,\cdot]\!]}_{\beta, \Delta})$ are two ternary Jordan algebras where
\[\rm{[\![\it{a},\it{b},\it{c}]\!]}_{\alpha, \omega} = \it{\alpha(a)b \circ_{\omega} c + \alpha(b)c \circ_{\omega} a + \alpha(c)a \circ_{\omega} b},\quad\forall \it{a, b, c} \in \mathscr{A}.\]
\[\rm{[\![\it{a},\it{b},\it{c}]\!]}_{\beta, \Delta} = \it{\beta(a)b \circ_{\Delta} c + \beta(b)c \circ_{\Delta} a + \beta(c)a \circ_{\Delta} b},\quad\forall \it{a, b, c} \in \mathscr{A}.\]
\end{prop}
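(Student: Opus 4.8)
The plan is to reduce everything to the two facts already proved, Lemma \ref{le:3.3} and Proposition \ref{prop:3.1}. First I would apply Lemma \ref{le:3.3}: since $\mathrm{char}\,\rm{F} \neq 2$ and $\Delta$ satisfies (\ref{eq:3.2}), the pair $(\mathscr{A}, \circ_{\Delta})$ is a Jordan algebra; since $\omega$ satisfies (\ref{eq:3.3}), the pair $(\mathscr{A}, \circ_{\omega})$ is a Jordan algebra. Both have underlying vector space $\mathscr{A}$, so $\alpha, \beta \in \mathscr{A}^{*}$ are legitimate linear functionals on these Jordan algebras, and it remains only to verify the hypothesis of Proposition \ref{prop:3.1} in each case, namely that the chosen functional annihilates the square of the Jordan algebra.

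For $(\mathscr{A}, \circ_{\omega})$ we have $a \circ_{\omega} b = a\omega(b) + b\omega(a)$, a sum of products (taken in the associative algebra $\mathscr{A}$) of two elements of $\mathscr{A}$; hence the assumption $\alpha(xy) = 0$ for all $x, y \in \mathscr{A}$ gives $\alpha(a \circ_{\omega} b) = \alpha(a\omega(b)) + \alpha(b\omega(a)) = 0$, i.e.\ $\alpha(\mathscr{A} \circ_{\omega} \mathscr{A}) = 0$. For $(\mathscr{A}, \circ_{\Delta})$ the point to notice is that, because $\Delta$ is a derivation of the commutative associative algebra $\mathscr{A}$, one has $a \circ_{\Delta} b = a\Delta(b) + b\Delta(a) = \Delta(ab)$; therefore the hypothesis $\beta\Delta(ab) = 0$ is precisely $\beta(a \circ_{\Delta} b) = 0$, i.e.\ $\beta(\mathscr{A} \circ_{\Delta} \mathscr{A}) = 0$.

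Finally, applying Proposition \ref{prop:3.1} to the Jordan algebra $J = (\mathscr{A}, \circ_{\omega})$ with the functional $\alpha$ produces a ternary Jordan algebra whose product is $\alpha(a)\,b \circ_{\omega} c + \alpha(b)\,c \circ_{\omega} a + \alpha(c)\,a \circ_{\omega} b$, which is exactly $[\![a,b,c]\!]_{\alpha,\omega}$; likewise, applying Proposition \ref{prop:3.1} to $J = (\mathscr{A}, \circ_{\Delta})$ with $\beta$ yields $(\mathscr{A}, [\![\cdot,\cdot,\cdot]\!]_{\beta,\Delta})$. I expect no genuine obstacle here: the only step requiring any care is the identification $a \circ_{\Delta} b = \Delta(ab)$, which is what makes the condition $\beta\Delta(ab) = 0$ coincide with the annihilation condition demanded by Proposition \ref{prop:3.1}, and a check that the standing hypothesis $\mathrm{char}\,\rm{F} \neq 2$ (needed for Lemma \ref{le:3.3}) is indeed assumed.
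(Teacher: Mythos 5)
Your proposal is correct and follows essentially the same route as the paper: verify that the functional annihilates the square of the relevant Jordan algebra and then invoke Lemma \ref{le:3.3} together with Proposition \ref{prop:3.1}. The only difference is in checking $\alpha(\mathscr{A} \circ_{\omega} \mathscr{A}) = 0$: the paper decomposes $\mathscr{A}$ into the $\pm 1$-eigenspaces of $\omega$ and evaluates $\alpha(a\omega(b) + b\omega(a))$ case by case, whereas you observe directly that $a\omega(b)$ and $b\omega(a)$ are products of elements of $\mathscr{A}$, so $\alpha$ kills them outright --- a genuine (if minor) simplification. Your identification $a \circ_{\Delta} b = \Delta(ab)$, which turns the hypothesis $\beta\Delta(ab)=0$ into exactly the annihilation condition of Proposition \ref{prop:3.1}, is the step the paper leaves implicit, and you have it right.
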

\begin{proof}
For every $c \in \mathscr{A}$,
\[\omega(c + \omega(c)) = c + \omega(c),\;\omega(c - \omega(c)) = -(c - \omega(c)),\;and\; c = \frac{1}{2}(c + \omega(c)) + \frac{1}{2}(c - \omega(c)).\]
Therefore,
\[\mathscr{A} = \mathscr{A}_{1} \dotplus \mathscr{A}_{2}\]
where
\[\mathscr{A}_{1} = \{a \in \mathscr{A} \mid \omega(a) = a\},\;\mathscr{A}_{2} = \{a \in \mathscr{A} \mid \omega(a) = -a\}.\]

Hence, we have
$$\alpha(a\omega(b) + b\omega(a)) =
\left\{
\begin{aligned}
0,\quad a \in \mathscr{A}_{1}, b \in \mathscr{A}_{2}\; or\; a \in \mathscr{A}_{2}, b \in \mathscr{A}_{1},\\
2\alpha(ab),\quad a, b \in \mathscr{A}_{1}\; or\; a, b \in \mathscr{A}_{2}.
\end{aligned}
\right.
$$
According to Proposition \ref{prop:3.1} and Lemma \ref{le:3.3}, the results hold.
\end{proof}
\begin{prop}
Suppose that $\mathcal{A}$ is a ternary Jordan algebra over $\rm{F}$ and $\mathscr{A}$ is a commutative associative algebra. On the vector space $\tilde{\mathcal{A}} = \mathcal{A} \otimes \mathscr{A}$ define a new multiplication by
\[\rm{[\![\it{x \otimes a},\it{y \otimes b},\it{z \otimes c}]\!]}^{'} = \rm{[\![\it{x},\it{y},\it{z}]\!]} \otimes \it{abc},\;\forall \it{x, y, z} \in \mathcal{A}, \it{a, b, c} \in \mathscr{A}.\]
Then $(\tilde{\mathcal{A}}, \rm{[\![\cdot,\cdot,\cdot]\!]}^{'})$ is also a ternary Jordan algebra. Moreover, if $\mathcal{A}$ is perfect and $\mathscr{A}$ is unital, then $\tilde{\mathcal{A}}$ is also perfect.
\end{prop}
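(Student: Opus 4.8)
The plan is to verify the two defining axioms of a ternary Jordan algebra for $(\tilde{\mathcal{A}}, [\![\cdot,\cdot,\cdot]\!]')$ and then to treat the perfectness claim separately. Since the new product is multilinear by construction and every element of $\tilde{\mathcal{A}} = \mathcal{A} \otimes \mathscr{A}$ is a finite sum of simple tensors $x \otimes a$, it suffices to check each identity on simple tensors and extend by multilinearity.

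First I would dispose of the symmetry axiom (1). For $\sigma \in \mathcal{S}_{3}$ and simple tensors $x_{i} \otimes a_{i}$,
\[
[\![x_{\sigma(1)} \otimes a_{\sigma(1)},\, x_{\sigma(2)} \otimes a_{\sigma(2)},\, x_{\sigma(3)} \otimes a_{\sigma(3)}]\!]' = [\![x_{\sigma(1)}, x_{\sigma(2)}, x_{\sigma(3)}]\!] \otimes a_{\sigma(1)}a_{\sigma(2)}a_{\sigma(3)}.
\]
The first tensor factor is independent of $\sigma$ because $\mathcal{A}$ satisfies axiom (1), and the second because $\mathscr{A}$ is commutative and associative; hence so is the whole expression.

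The substantive part is the derivation axiom (2). Fix $x_{2} \otimes b_{2},\, x_{3} \otimes b_{3},\, y_{2} \otimes c_{2},\, y_{3} \otimes c_{3}$ and set $R' := R'_{(x_{2} \otimes b_{2},\, x_{3} \otimes b_{3})}$, $S' := R'_{(y_{2} \otimes c_{2},\, y_{3} \otimes c_{3})}$. On a simple tensor $R'(x \otimes a) = [\![x, x_{2}, x_{3}]\!] \otimes a b_{2} b_{3}$, and similarly for $S'$, so a short computation using commutativity of $\mathscr{A}$ gives
\[
[R', S'](x \otimes a) = D(x) \otimes a\, d, \qquad D := [R_{(x_{2}, x_{3})}, R_{(y_{2}, y_{3})}], \quad d := b_{2} b_{3} c_{2} c_{3} \in \mathscr{A}.
\]
By axiom (2) for $\mathcal{A}$ we have $D \in Der(\mathcal{A})$, so it is enough to prove the general fact that for any $D \in Der(\mathcal{A})$ and any $d \in \mathscr{A}$ the operator $T_{D,d} : w \otimes m \mapsto D(w) \otimes m d$ lies in $Der(\tilde{\mathcal{A}})$. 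Applying $T_{D,d}$ to $[\![x \otimes a, y \otimes b, z \otimes c]\!]' = [\![x,y,z]\!] \otimes abc$ yields $D([\![x,y,z]\!]) \otimes abcd$, while the sum of the three Leibniz terms equals $\big([\![D(x),y,z]\!] + [\![x,D(y),z]\!] + [\![x,y,D(z)]\!]\big) \otimes abcd$ once the scalar factors are collected via commutativity; these agree precisely because $D$ is a derivation of $\mathcal{A}$. Hence $[R',S'] = T_{D,d} \in Der(\tilde{\mathcal{A}})$, establishing axiom (2), so $(\tilde{\mathcal{A}}, [\![\cdot,\cdot,\cdot]\!]')$ is a ternary Jordan algebra.

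For the last assertion, assume $[\![\mathcal{A},\mathcal{A},\mathcal{A}]\!] = \mathcal{A}$ and let $1 \in \mathscr{A}$ be the identity. Given a simple tensor $x \otimes a$, write $x = \sum_{i} [\![u_{i}, v_{i}, w_{i}]\!]$; then
\[
x \otimes a = \sum_{i} [\![u_{i}, v_{i}, w_{i}]\!] \otimes (1 \cdot 1 \cdot a) = \sum_{i} [\![u_{i} \otimes 1,\, v_{i} \otimes 1,\, w_{i} \otimes a]\!]' \in [\![\tilde{\mathcal{A}}, \tilde{\mathcal{A}}, \tilde{\mathcal{A}}]\!]'.
\]
Since simple tensors span $\tilde{\mathcal{A}}$, this gives $[\![\tilde{\mathcal{A}}, \tilde{\mathcal{A}}, \tilde{\mathcal{A}}]\!]' = \tilde{\mathcal{A}}$, i.e. $\tilde{\mathcal{A}}$ is perfect. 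The only genuine hazard in the whole argument is bookkeeping: tracking where the $\mathscr{A}$-factors land when expanding the commutator and the Leibniz rule. Commutativity and associativity of $\mathscr{A}$ make all the resulting products (such as $a b_{2} b_{3} c_{2} c_{3}$ and $abcd$) order-independent, after which everything reduces to the two axioms for $\mathcal{A}$.
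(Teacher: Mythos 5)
Your proof is correct and is exactly the ``direct computation'' the paper leaves to the reader: you verify symmetry via commutativity of $\mathscr{A}$, show $[R'_{(x_2\otimes b_2,\,x_3\otimes b_3)},R'_{(y_2\otimes c_2,\,y_3\otimes c_3)}]=T_{D,d}$ with $D=[R_{(x_2,x_3)},R_{(y_2,y_3)}]\in Der(\mathcal{A})$, and check $T_{D,d}\in Der(\tilde{\mathcal{A}})$, which mirrors the paper's own detailed computation in the analogous Proposition \ref{prop:5.1}. The reduction to simple tensors is justified since the commutator depends multilinearly on the four arguments and $Der(\tilde{\mathcal{A}})$ is a subspace, and the perfectness argument using the unit of $\mathscr{A}$ is also correct.
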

\begin{proof}
It follows a direct computation.
\end{proof}
\section{Generalized derivations of ternary Jordan algebras}\label{se:4}
In this section, we study the relations between $GDer(\mathcal{A})$, $QDer(\mathcal{A})$, $Der(\mathcal{A})$, $ZDer(\mathcal{A})$, $Q\Gamma(\mathcal{A})$ and $\Gamma(\mathcal{A})$ of a ternary Jordan algebra $\mathcal{A}$. And we suppose that $char \rm{F} \neq 2, 3$ in this section.

\begin{thm}\label{thm:4.1}
Let $\mathcal{A}$ be a ternary Jordan algebra. Then $GDer(\mathcal{A})$, $QDer(\mathcal{A})$, $Q\Gamma(\mathcal{A})$ and $\Gamma(\mathcal{A})$ are subalgebras of the general linear Lie algebra $gl(\mathcal{A})$. In addition, if $Z(\mathcal{A}) = \{0\}$, then $Q\Gamma(\mathcal{A})$ and $\Gamma(\mathcal{A})$ are abelian.
\end{thm}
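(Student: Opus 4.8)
The plan is, for each of the four sets, to check that it is a linear subspace of $gl(\mathcal{A})$ and that it is closed under the commutator bracket $[f,g]=fg-gf$. Closure under linear combinations is immediate in every case, since the defining conditions — membership in $\Delta(\mathcal{A})$, the quasiderivation identity, and the (quasi)centroid identities — are linear in the maps involved; so the whole content is closure under the bracket.

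For $GDer(\mathcal{A})$, given $(f_1,f_2,f_3,f')$ and $(g_1,g_2,g_3,g')$ in $\Delta(\mathcal{A})$, I would show that $([f_1,g_1],[f_2,g_2],[f_3,g_3],[f',g'])\in\Delta(\mathcal{A})$. The computation is to expand $f'g'([\![x,y,z]\!])$ by first applying the defining identity to $g'([\![x,y,z]\!])$ and then applying $f'$ to each of the three resulting terms and expanding once more; doing the same for $g'f'([\![x,y,z]\!])$ and subtracting, the six ``mixed'' terms such as $[\![g_1(x),f_2(y),z]\!]$, $[\![f_1(x),y,g_3(z)]\!]$, and so on, cancel, and one is left precisely with $[\![[f_1,g_1](x),y,z]\!]+[\![x,[f_2,g_2](y),z]\!]+[\![x,y,[f_3,g_3](z)]\!]=[f',g']([\![x,y,z]\!])$. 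This is the longest computation, but it is entirely mechanical bookkeeping. The case $QDer(\mathcal{A})$ then follows immediately by specializing $f_1=f_2=f_3=f$ and $g_1=g_2=g_3=g$, which makes all the companion maps above equal to $[f,g]$.

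For $Q\Gamma(\mathcal{A})$ I would first prove the stronger fact that $[f,g](\mathcal{A})\subseteq Z(\mathcal{A})$ whenever $f,g\in Q\Gamma(\mathcal{A})$. Writing $h=[f,g]$ and using the defining identities of $f$ and of $g$, one gets $[\![fg(x),y,z]\!]=[\![g(x),y,f(z)]\!]=[\![x,y,gf(z)]\!]$ and, interchanging $f$ and $g$, $[\![gf(x),y,z]\!]=[\![x,y,fg(z)]\!]$; subtracting gives $[\![h(x),y,z]\!]=-[\![x,y,h(z)]\!]$. Now the total symmetry of the ternary product turns the right-hand side into $-[\![h(z),x,y]\!]$, so the trilinear map $(x,y,z)\mapsto[\![h(x),y,z]\!]$ picks up a minus sign under a cyclic permutation of its arguments; applying this three times and using $char\,\rm{F}\neq2$ forces it to vanish identically. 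Hence $h(x)\in Z(\mathcal{A})$ for every $x$, so in particular $[\![h(x),y,z]\!]=[\![x,h(y),z]\!]=[\![x,y,h(z)]\!]=0$ and $h\in Q\Gamma(\mathcal{A})$. Since $\Gamma(\mathcal{A})\subseteq Q\Gamma(\mathcal{A})$, the same computation yields $[f,g](\mathcal{A})\subseteq Z(\mathcal{A})$ for $f,g\in\Gamma(\mathcal{A})$; and because $f([\![x,y,z]\!])=[\![f(x),y,z]\!]$ (and similarly for $g$) one also gets $[f,g]([\![x,y,z]\!])=[\![[f,g](x),y,z]\!]=0$, so all four quantities in the centroid identity vanish for $h=[f,g]$ and thus $[f,g]\in\Gamma(\mathcal{A})$. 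Finally, if $Z(\mathcal{A})=\{0\}$, the inclusion $[f,g](\mathcal{A})\subseteq Z(\mathcal{A})=\{0\}$ just established forces $[f,g]=0$ for all $f,g\in Q\Gamma(\mathcal{A})$, so $Q\Gamma(\mathcal{A})$ is abelian and hence so is $\Gamma(\mathcal{A})$.

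The step I expect to be the real obstacle is recognizing that in the ternary setting the bracket of two quasicentroid elements already lands in the annihilator; this is the symmetry-and-sign argument above, and it is exactly the point where the ternary case departs from the behaviour of ordinary Jordan algebras and Jordan superalgebras. The $GDer$ case, by contrast, is long but routine.
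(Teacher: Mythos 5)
Your proof is correct, and the $GDer(\mathcal{A})$ and $QDer(\mathcal{A})$ parts follow the paper's computation exactly: expand $f'g'(\rm{[\![\it{x},\it{y},\it{z}]\!]})$ and $g'f'(\rm{[\![\it{x},\it{y},\it{z}]\!]})$, cancel the six mixed terms, and read off $([f_1,g_1],[f_2,g_2],[f_3,g_3],[f',g'])\in\Delta(\mathcal{A})$. Where you genuinely diverge is in the treatment of $Q\Gamma(\mathcal{A})$ and $\Gamma(\mathcal{A})$. You first prove the stronger statement $[f,g](\mathcal{A})\subseteq Z(\mathcal{A})$ via the relation $\rm{[\![[\it{f},\it{g}](\it{x}),\it{y},\it{z}]\!]}=-\rm{[\![\it{x},\it{y},[\it{f},\it{g}](\it{z})]\!]}$ and a cyclic sign argument (costing $char\,\rm{F}\neq 2$), and then everything else --- closure of $Q\Gamma$, closure of $\Gamma$, and abelianness when $Z(\mathcal{A})=\{0\}$ --- falls out because all the relevant expressions vanish identically. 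The paper instead verifies the quasicentroid identity for $[f,g]$ directly by shuttling $f$ and $g$ between slots (showing $\rm{[\![[\it{f},\it{g}](\it{x}),\it{y},\it{z}]\!]}=\rm{[\![\it{x},[\it{f},\it{g}](\it{y}),\it{z}]\!]}$, etc.), and only afterwards, for the abelian claim, shows $\rm{[\![[\it{f},\it{g}](\it{x}),\it{y},\it{z}]\!]}=0$ by routing both terms of the commutator to the common expression $\rm{[\![\it{x},\it{y},\it{fg}(\it{z})]\!]}$ --- a chain that needs no characteristic assumption at all. Since the section already assumes $char\,\rm{F}\neq 2,3$, your hypothesis costs nothing here, and your route has the advantage of isolating the structurally interesting fact (which the paper records separately as $[Q\Gamma(\mathcal{A}),Q\Gamma(\mathcal{A})]\subseteq\rm{End}(\mathcal{A},\it{Z}(\mathcal{A}))$ in Remark \ref{re:4.2}) as the single lemma from which the rest of the statement follows; the paper's route is marginally more economical in hypotheses and yields the explicit identities $\rm{[\![[\it{f},\it{g}](\it{x}),\it{y},\it{z}]\!]}=\rm{[\![\it{x},[\it{f},\it{g}](\it{y}),\it{z}]\!]}$ without passing through the annihilator.
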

\begin{proof}
For arbitrary $f_{1}, g_{1} \in GDer(\mathcal{A})$ and $x, y, z \in \mathcal{A}$, we have
\begin{align*}
&\rm{[\![[\it{f}_{1}, \it{g}_{1}](\it{x}),\it{y},\it{z}]\!]} = \rm{[\![\it{f}_{1}\it{g}_{1}(\it{x}),\it{y},\it{z}]\!]} - \rm{[\![\it{g}_{1}\it{f}_{1}(\it{x}),\it{y},\it{z}]\!]}\\
&= f^{'}(\rm{[\![\it{g}_{1}(\it{x}),\it{y},\it{z}]\!]}) - \rm{[\![\it{g}_{1}(\it{x}),\it{f}_{2}(\it{y}),\it{z}]\!]} - \rm{[\![\it{g}_{1}(\it{x}),\it{y},\it{f}_{3}(\it{z})]\!]} - g^{'}(\rm{[\![\it{f}_{1}(\it{x}),\it{y},\it{z}]\!]}) + \rm{[\![\it{f}_{1}(\it{x}),\it{g}_{2}(\it{y}),\it{z}]\!]}\\
&+ \rm{[\![\it{g}_{1}(\it{x}),\it{y},\it{g}_{3}(\it{z})]\!]}\\
&= f^{'}(g^{'}(\rm{[\![\it{x},\it{y},\it{z}]\!]}) - \rm{[\![\it{x},\it{g}_{2}(\it{y}),\it{z}]\!]} - \rm{[\![\it{x},\it{y},\it{g}_{3}(\it{z})]\!]}) - \rm{[\![\it{g}_{1}(\it{x}),\it{f}_{2}(\it{y}),\it{z}]\!]} - \rm{[\![\it{g}_{1}(\it{x}),\it{y},\it{f}_{3}(\it{z})]\!]}\\
&- g^{'}(f^{'}(\rm{[\![\it{x},\it{y},\it{z}]\!]}) - \rm{[\![\it{x},\it{f}_{2}(\it{y}),\it{z}]\!]} - \rm{[\![\it{x},\it{y},\it{f}_{3}(\it{z})]\!]})+ \rm{[\![\it{f}_{1}(\it{x}),\it{g}_{2}(\it{y}),\it{z}]\!]} + \rm{[\![\it{g}_{1}(\it{x}),\it{y},\it{g}_{3}(\it{z})]\!]}\\
&= f^{'}g^{'}(\rm{[\![\it{x},\it{y},\it{z}]\!]}) - \rm{[\![\it{f}_{1}(\it{x}),\it{g}_{2}(\it{y}),\it{z}]\!]} - \rm{[\![\it{x},\it{f}_{2}\it{g}_{2}(\it{y}),\it{z}]\!]} - \rm{[\![\it{x},\it{g}_{2}(\it{y}),\it{f}_{3}(\it{z})]\!]} - \rm{[\![\it{f}_{1}(\it{x}),\it{y},\it{g}_{3}(\it{z})]\!]}\\
&- \rm{[\![\it{x},\it{f}_{2}(\it{y}),\it{g}_{3}(\it{z})]\!]} - \rm{[\![\it{x},\it{y},\it{f}_{3}\it{g}_{3}(\it{z})]\!]} - \rm{[\![\it{g}_{1}(\it{x}),\it{f}_{2}(\it{y}),\it{z}]\!]} - \rm{[\![\it{g}_{1}(\it{x}),\it{y},\it{f}_{3}(\it{z})]\!]}\\
&- g^{'}f^{'}(\rm{[\![\it{x},\it{y},\it{z}]\!]}) + \rm{[\![\it{g}_{1}(\it{x}),\it{f}_{2}(\it{y}),\it{z}]\!]} + \rm{[\![\it{x},\it{g}_{2}\it{f}_{2}(\it{y}),\it{z}]\!]} + \rm{[\![\it{x},\it{f}_{2}(\it{y}),\it{g}_{3}(\it{z})]\!]} + \rm{[\![\it{g}_{1}(\it{x}),\it{y},\it{f}_{3}(\it{z})]\!]}\\
&+ \rm{[\![\it{x},\it{g}_{2}(\it{y}),\it{f}_{3}(\it{z})]\!]} + \rm{[\![\it{x},\it{y},\it{g}_{3}\it{f}_{3}(\it{z})]\!]} + \rm{[\![\it{f}_{1}(\it{x}),\it{g}_{2}(\it{y}),\it{z}]\!]} + \rm{[\![\it{f}_{1}(\it{x}),\it{y},\it{g}_{3}(\it{z})]\!]}\\
&= [f^{'}, g^{'}](\rm{[\![\it{x},\it{y},\it{z}]\!]}) - \rm{[\![\it{x},[\it{f}_{2}, \it{g}_{2}](\it{y}),\it{z}]\!]} - \rm{[\![\it{x},\it{y},[\it{f}_{3}, \it{g}_{3}](\it{z})]\!]}.
\end{align*}
Obviously, $[f^{'}, g^{'}], [f_{2}, g_{2}], [f_{3}, g_{3}] \in \rm{End}(\mathcal{A})$. Hence, $[f_{1}, g_{1}] \in GDer(\mathcal{A})$, i.e., $GDer(\mathcal{A})$ is a subalgebra of $gl(\mathcal{A})$.

Similarly, $QDer(\mathcal{A})$ is a subalgebra of $gl(\mathcal{A})$.

For arbitrary $f, g \in Q\Gamma(\mathcal{A})$ and $x, y, z \in \mathcal{A}$, we have
\begin{align*}
&\rm{[\![[\it{f}, \it{g}](\it{x}),\it{y},\it{z}]\!]} = \rm{[\![\it{f}\it{g}(\it{x}),\it{y},\it{z}]\!]} - \rm{[\![\it{g}\it{f}(\it{x}),\it{y},\it{z}]\!]} = \rm{[\![\it{g}(\it{x}),\it{y},\it{f}(\it{z})]\!]} - \rm{[\![\it{f}(\it{x}),\it{y},\it{g}(\it{z})]\!]}\\
&= \rm{[\![\it{x},\it{g}(\it{y}),\it{f}(\it{z})]\!]} - \rm{[\![\it{x},\it{f}(\it{y}),\it{g}(\it{z})]\!]} = \rm{[\![\it{x},\it{f}\it{g}(\it{y}),\it{z}]\!]} - \rm{[\![\it{x},\it{g}\it{f}(\it{y}),\it{z}]\!]} = \rm{[\![\it{x},[\it{f}, \it{g}](\it{y}),\it{z}]\!]},
\end{align*}
similarly, we have $\rm{[\![[\it{f}, \it{g}](\it{x}),\it{y},\it{z}]\!]} = \rm{[\![\it{x},\it{y},[\it{f}, \it{g}](\it{z})]\!]}$, which implies that $[f, g] \in Q\Gamma(\mathcal{A})$, i.e., $Q\Gamma(\mathcal{A})$ is a subalgebra of $gl(\mathcal{A})$.

For arbitrary $f, g \in \Gamma(\mathcal{A})$ and $x, y, z \in \mathcal{A}$, we have
\begin{align*}
&[f, g](\rm{[\![\it{x},\it{y},\it{z}]\!]}) = \it{fg}(\rm{[\![\it{x},\it{y},\it{z}]\!]}) - \it{gf}(\rm{[\![\it{x},\it{y},\it{z}]\!]}) = \it{g}(\rm{[\![\it{x},\it{f}(\it{y}),\it{z}]\!]}) - \it{f}(\rm{[\![\it{x},\it{g}(\it{y}),\it{z}]\!]})\\
&= \rm{[\![\it{g}(\it{x}),\it{f}(\it{y}),\it{z}]\!]} - \rm{[\![\it{f}(\it{x}),\it{g}(\it{y}),\it{z}]\!]} = \rm{[\![\it{f}\it{g}(\it{x}),\it{y},\it{z}]\!]} - \rm{[\![\it{g}\it{f}(\it{x}),\it{y},\it{z}]\!]} = \rm{[\![[\it{f}, \it{g}](\it{x}),\it{y},\it{z}]\!]},
\end{align*}
similarly, we have $[f, g](\rm{[\![\it{x},\it{y},\it{z}]\!]}) = \rm{[\![\it{x},[\it{f}, \it{g}](\it{y}),\it{z}]\!]} = \rm{[\![\it{x},\it{y},[\it{f}, \it{g}](\it{z})]\!]}$, i.e., $[f, g] \in \Gamma(\mathcal{A})$. Therefore, $\Gamma(\mathcal{A})$ is a subalgebra of $gl(\mathcal{A})$.

For all $f, g \in Q\Gamma(\mathcal{A})$ and $x, y, z \in \mathcal{A}$, we have
\begin{align*}
&\rm{[\![[\it{f}, \it{g}](\it{x}),\it{y},\it{z}]\!]} = \rm{[\![\it{f}\it{g}(\it{x}),\it{y},\it{z}]\!]} - \rm{[\![\it{g}\it{f}(\it{x}),\it{y},\it{z}]\!]} = \rm{[\![\it{g}(\it{x}),\it{f}(\it{y}),\it{z}]\!]} - \rm{[\![\it{f}(\it{x}),\it{g}(\it{y}),\it{z}]\!]}\\
&= \rm{[\![\it{x},\it{f}(\it{y}),\it{g}(\it{z})]\!]} - \rm{[\![\it{f}(\it{x}),\it{y},\it{g}(\it{z})]\!]} = \rm{[\![\it{x},\it{y},\it{f}\it{g}(\it{z})]\!]} - \rm{[\![\it{x},\it{y},\it{f}\it{g}(\it{z})]\!]} = 0,
\end{align*}
which implies that $[\it{f}, \it{g}](\it{x}) \in Z(\mathcal{A})$. Since $Z(\mathcal{A}) = \{0\}$, we have $[\it{f}, \it{g}](\it{x}) = \rm{0}$ for all $x \in \mathcal{A}$, so $[\it{f}, \it{g}] = \rm{0}$, i.e., $Q\Gamma(\mathcal{A})$ is abelian. Similarly, $\Gamma(\mathcal{A})$ is abelian.
\end{proof}
\begin{re}\label{re:4.2}
\begin{enumerate}[(1)]
\item According to Theorem \ref{thm:4.1}, we see that $Q\Gamma(\mathcal{A})$ is a subalgebra of $gl(\mathcal{A})$ where $\mathcal{A}$ denotes a ternary Jordan algebra, which is different from in the case of Jordan algebras. As an application, we also get $Q\Gamma(\mathcal{A})$ is an ideal of $GDer(\mathcal{A})$(See Theorem \ref{thm:4.4}).
\item According to the proof of Theorem \ref{thm:4.1}, we have $[Q\Gamma(\mathcal{A}), Q\Gamma(\mathcal{A})]$, $[Q\Gamma(\mathcal{A}), \Gamma(\mathcal{A})]$ and $[\Gamma(\mathcal{A}), \Gamma(\mathcal{A})] \in \rm{End}(\mathcal{A}, Z(\mathcal{A}))$.
\end{enumerate}
\end{re}
\begin{prop}\label{prop:4.3}
Let $\mathcal{A}$ be a ternary Jordan algebra, then
\begin{enumerate}[(1)]
\item $[Der(\mathcal{A}), \Gamma(\mathcal{A})] \subseteq \Gamma(\mathcal{A})$;
\item $[QDer(\mathcal{A}), Q\Gamma(\mathcal{A})] \subseteq Q\Gamma(\mathcal{A})$;
\item $[Q\Gamma(\mathcal{A}), Q\Gamma(\mathcal{A})] \subseteq QDer(\mathcal{A})$;
\item $\Gamma(\mathcal{A}) \subseteq QDer(\mathcal{A})$;
\item $Q\Gamma(\mathcal{A}) \subseteq GDer(\mathcal{A})$;
\item $\Gamma(\mathcal{A})Der(\mathcal{A}) \subseteq Der(\mathcal{A})$;
\item $\Gamma(\mathcal{A}) \subseteq QDer(\mathcal{A}) \cap Q\Gamma(\mathcal{A})$.
\end{enumerate}
\end{prop}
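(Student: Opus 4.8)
The plan is to establish all seven inclusions by testing each commutator or composition against the relevant defining identity; since those identities are all linear in the endomorphisms involved, it suffices to argue on generators of $\Gamma(\mathcal{A})$ and $Q\Gamma(\mathcal{A})$ (that is, on endomorphisms actually satisfying the centroid, resp. quasicentroid, identities) and then extend by linearity. Two elementary facts will be used throughout: the total symmetry of $[\![\cdot,\cdot,\cdot]\!]$, and the fact that $[\![u,v,w]\!]=0$ as soon as one of $u,v,w$ lies in $Z(\mathcal{A})$.

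I would first dispatch (3), (4), (5), (7), which only require exhibiting the correct auxiliary maps. For (4): summing the three centroid identities for $\gamma\in\Gamma(\mathcal{A})$ gives $[\![\gamma(x),y,z]\!]+[\![x,\gamma(y),z]\!]+[\![x,y,\gamma(z)]\!]=3\gamma([\![x,y,z]\!])$, so $\gamma\in QDer(\mathcal{A})$ with companion map $3\gamma$. For (5): if $f\in Q\Gamma(\mathcal{A})$ the quadruple $(f,-f,0,0)$ satisfies the generalized-derivation identity, since $[\![f(x),y,z]\!]-[\![x,f(y),z]\!]=0$, whence $f\in GDer(\mathcal{A})$. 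For (3): the second displayed computation in the proof of Theorem~\ref{thm:4.1} already shows $[\![[f,g](x),y,z]\!]=0$ for $f,g\in Q\Gamma(\mathcal{A})$, i.e. $[f,g](\mathcal{A})\subseteq Z(\mathcal{A})$; hence every summand of the quasiderivation identity for $[f,g]$ vanishes and $[f,g]\in QDer(\mathcal{A})$ with companion map $0$. Then (7) follows from (4) together with the inclusion $\Gamma(\mathcal{A})\subseteq Q\Gamma(\mathcal{A})$ noted after the definition of $Q\Gamma(\mathcal{A})$.

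For (1), (2) and (6) the engine is the ``derivation-style'' cancellation already used in the proof of Theorem~\ref{thm:4.1}. For (6): applying $\gamma\in\Gamma(\mathcal{A})$ to the derivation identity for $d\in Der(\mathcal{A})$ and moving $\gamma$ into each slot via $\gamma([\![a,b,c]\!])=[\![\gamma(a),b,c]\!]$ shows $\gamma d$ satisfies the derivation identity. For (1): I would expand $[\![d\gamma(x),y,z]\!]$ by the derivation identity for $d$ applied to $[\![\gamma(x),y,z]\!]$, rewrite each $\gamma$-term using the centroid identity, subtract $[\![\gamma d(x),y,z]\!]=\gamma([\![d(x),y,z]\!])$, and re-assemble the three remaining summands by the derivation identity for $d$, arriving at $[\![[d,\gamma](x),y,z]\!]=[d,\gamma]([\![x,y,z]\!])$; the two other slots follow identically by symmetry. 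For (2): given $D\in QDer(\mathcal{A})$ with companion map $D'$ and $f\in Q\Gamma(\mathcal{A})$, I would expand $[\![Df(x),y,z]\!]$ by applying the quasiderivation identity for $D$ to the triple $(f(x),y,z)$, transport $f$ among the slots using the quasicentroid identity, and do the same for $[\![x,Df(y),z]\!]$; the decisive observation is that $D'([\![f(x),y,z]\!])=D'([\![x,f(y),z]\!])=D'([\![x,y,f(z)]\!])$ because those arguments coincide, so the $D'$-terms cancel and one obtains $[\![[D,f](x),y,z]\!]=[\![x,[D,f](y),z]\!]=[\![x,y,[D,f](z)]\!]$, i.e. $[D,f]\in Q\Gamma(\mathcal{A})$. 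I expect (2) to be the only step needing genuine care: because the quasiderivation and quasicentroid identities are looser than the centroid identity, one must keep precise track of which slot $f$ has been moved into at each stage and confirm that all three ``slot versions'' of the final identity really agree, which is exactly what forces $[D,f]$ into $Q\Gamma(\mathcal{A})$ rather than merely into $GDer(\mathcal{A})$; the remaining parts are routine bookkeeping once the auxiliary maps are chosen.
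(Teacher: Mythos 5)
Your proposal is correct and follows essentially the same route as the paper: parts (3)--(5) and (7) by exhibiting the auxiliary maps (the paper uses companion $3\gamma$ in (4) and the quadruple $(f,-\tfrac12 f,-\tfrac12 f,0)$ in (5), where you use $(f,-f,0,0)$ — both valid), and parts (1), (2), (6) by the same slot-transport computations, including the key cancellation $D'([\![f(x),y,z]\!])=D'([\![x,f(y),z]\!])$ in (2).
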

\begin{proof}
(1). For any $f \in Der(\mathcal{A})$, $g \in \Gamma(\mathcal{A})$ and $x, y, z \in \mathcal{A}$, we have
\begin{align*}
&[f, g](\rm{[\![\it{x},\it{y},\it{z}]\!]}) = \it{fg}(\rm{[\![\it{x},\it{y},\it{z}]\!]}) - \it{gf}(\rm{[\![\it{x},\it{y},\it{z}]\!]})\\
&= \it{f}(\rm{[\![\it{g}(\it{x}),\it{y},\it{z}]\!]}) - \it{g}(\rm{[\![\it{f}(\it{x}),\it{y},\it{z}]\!]} + \rm{[\![\it{x},\it{f}(\it{y}),\it{z}]\!]} + \rm{[\![\it{x},\it{y},\it{f}(\it{z})]\!]})\\
&= \rm{[\![\it{f}\it{g}(\it{x}),\it{y},\it{z}]\!]} + \rm{[\![\it{g}(\it{x}),\it{f}(\it{y}),\it{z}]\!]} + \rm{[\![\it{g}(\it{x}),\it{y},\it{f}(\it{z})]\!]} - \rm{[\![\it{g}\it{f}(\it{x}),\it{y},\it{z}]\!]} - \rm{[\![\it{g}(\it{x}),\it{f}(\it{y}),\it{z}]\!]}\\
&- \rm{[\![\it{g}(\it{x}),\it{y},\it{f}(\it{z})]\!]}\\
&= \rm{[\![\it{f}\it{g}(\it{x}),\it{y},\it{z}]\!]} - \rm{[\![\it{g}\it{f}(\it{x}),\it{y},\it{z}]\!]} = \rm{[\![[\it{f}, \it{g}](\it{x}),\it{y},\it{z}]\!]},
\end{align*}
similarly, we have $[f, g](\rm{[\![\it{x},\it{y},\it{z}]\!]}) = \rm{[\![\it{x},[\it{f}, \it{g}](\it{y}),\it{z}]\!]} = \rm{[\![\it{x},\it{y},[\it{f}, \it{g}](\it{z})]\!]}$, which implies that $[f, g] \in \Gamma(\mathcal{A})$. Hence, $[Der(\mathcal{A}), \Gamma(\mathcal{A})] \subseteq \Gamma(\mathcal{A})$.

(2). For any $f \in QDer(\mathcal{A})$, $g \in Q\Gamma(\mathcal{A})$ and $x, y, z \in \mathcal{A}$, we have
\begin{align*}
&\rm{[\![[\it{f}, \it{g}](\it{x}),\it{y},\it{z}]\!]} = \rm{[\![\it{f}\it{g}(\it{x}),\it{y},\it{z}]\!]} - \rm{[\![\it{g}\it{f}(\it{x}),\it{y},\it{z}]\!]}\\
&= \it{f}^{'}(\rm{[\![\it{g}(\it{x}),\it{y},\it{z}]\!]}) - \rm{[\![\it{g}(\it{x}),\it{f}(\it{y}),\it{z}]\!]} - \rm{[\![\it{g}(\it{x}),\it{y},\it{f}(\it{z})]\!]} - \rm{[\![\it{f}(\it{x}),\it{g}(\it{y}),\it{z}]\!]}\\
&= \it{f}^{'}(\rm{[\![\it{x},\it{g}(\it{y}),\it{z}]\!]}) - \rm{[\![\it{x},\it{g}\it{f}(\it{y}),\it{z}]\!]} - \rm{[\![\it{x},\it{g}(\it{y}),\it{f}(\it{z})]\!]} - \it{f}^{'}(\rm{[\![\it{x},\it{g}(\it{y}),\it{z}]\!]}) + \rm{[\![\it{x},\it{f}\it{g}(\it{y}),\it{z}]\!]}\\
&+ \rm{[\![\it{x},\it{g}(\it{y}),\it{f}(\it{z})]\!]}\\
&= \rm{[\![\it{x},[\it{f}, \it{g}](\it{y}),\it{z}]\!]},
\end{align*}
similarly, we have $\rm{[\![[\it{f}, \it{g}](\it{x}),\it{y},\it{z}]\!]} = \rm{[\![\it{x},\it{y},[\it{f}, \it{g}](\it{z})]\!]}$. Hence, $[f, g] \in Q\Gamma(\mathcal{A})$.

So $[QDer(\mathcal{A}), Q\Gamma(\mathcal{A})] \subseteq Q\Gamma(\mathcal{A})$.

(3). For any $f, g \in Q\Gamma(\mathcal{A})$ and $x, y, z \in \mathcal{A}$, according to the proof of Theorem \ref{thm:4.1}, we have
\[\rm{[\![[\it{f}, \it{g}](\it{x}),\it{y},\it{z}]\!]} = 0,\]
Therefore, we have
\[\rm{[\![[\it{f}, \it{g}](\it{x}),\it{y},\it{z}]\!]} + \rm{[\![\it{x},[\it{f}, \it{g}](\it{y}),\it{z}]\!]} + \rm{[\![\it{x},\it{y},[\it{f}, \it{g}](\it{z})]\!]} = 0,\]
which implies that $[f, g] \in QDer(\mathcal{A})$. Hence, $[Q\Gamma(\mathcal{A}), Q\Gamma(\mathcal{A})] \subseteq QDer(\mathcal{A})$.

(4). For any $f \in \Gamma(\mathcal{A})$ and $x, y, z \in \mathcal{A}$, we have
\[\rm{[\![\it{f}(\it{x}),\it{y},\it{z}]\!]} + \rm{[\![\it{x},\it{f}(\it{y}),\it{z}]\!]} + \rm{[\![\it{x},\it{y},\it{f}(\it{z})]\!]} = 3\it{f}(\rm{[\![\it{x},\it{y},\it{z}]\!]}),\]
which implies that $f \in QDer(\mathcal{A})$, so $\Gamma(\mathcal{A}) \subseteq QDer(\mathcal{A})$.

(5). For all $f \in Q\Gamma(\mathcal{A})$ and $x, y, z \in \mathcal{A}$, we have
\[\rm{[\![\it{f}(\it{x}),\it{y},\it{z}]\!]} + \rm{[\![\it{x},-\frac{1}{2}\it{f}(\it{y}),\it{z}]\!]} + \rm{[\![\it{x},\it{y},-\frac{1}{2}\it{f}(\it{z})]\!]} = 0,\]
which implies that $f \in GDer(\mathcal{A})$. Hence, $Q\Gamma(\mathcal{A}) \subseteq GDer(\mathcal{A})$.

(6). For any $f \in \Gamma(\mathcal{A})$, $g \in Der(\mathcal{A})$ and $x, y, z \in \mathcal{A}$, we have
\begin{align*}
&\it{fg}(\rm{[\![\it{x},\it{y},\it{z}]\!]}) = \it{f}(\rm{[\![\it{g}(\it{x}),\it{y},\it{z}]\!]} + \rm{[\![\it{x},\it{g}(\it{y}),\it{z}]\!]} + \rm{[\![\it{x},\it{y},\it{g}(\it{z})]\!]})\\
&= \rm{[\![\it{f}\it{g}(\it{x}),\it{y},\it{z}]\!]} + \rm{[\![\it{x},\it{f}\it{g}(\it{y}),\it{z}]\!]} + \rm{[\![\it{x},\it{y},\it{f}\it{g}(\it{z})]\!]},
\end{align*}
which implies that $fg \in Der(\mathcal{A})$. So $\Gamma(\mathcal{A})Der(\mathcal{A}) \subseteq Der(\mathcal{A})$.

(7). Obviously, $\Gamma(\mathcal{A}) \subseteq Q\Gamma(\mathcal{A})$. According to (4), we have $\Gamma(\mathcal{A}) \subseteq QDer(\mathcal{A}) \cap Q\Gamma(\mathcal{A})$.
\end{proof}
\begin{thm}\label{thm:4.4}
Let $\mathcal{A}$ be a ternary Jordan algebra over $\rm{F}$. Then
\begin{enumerate}[(1)]
\item $GDer(\mathcal{A}) = QDer(\mathcal{A}) + Q\Gamma(\mathcal{A})$;
\item $Q\Gamma(\mathcal{A})$ is an ideal of $GDer(\mathcal{A})$. In addition, if $Z(\mathcal{A}) = \{0\}$, then $Q\Gamma(\mathcal{A})$ is an abelian ideal of $GDer(\mathcal{A})$.
\end{enumerate}
\end{thm}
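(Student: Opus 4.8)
The plan is to prove (1) by the standard decomposition trick: given $f_{1} \in GDer(\mathcal{A})$ with associated data $(f_{1}, f_{2}, f_{3}, f^{'}) \in \Delta(\mathcal{A})$, I will exhibit an element of $QDer(\mathcal{A})$ and an element of $Q\Gamma(\mathcal{A})$ whose sum is $f_{1}$. Using the symmetry remark after the definition of $\Delta(\mathcal{A})$ (part (1) of that remark), all of $f_{1}, f_{2}, f_{3}$ are generalized derivations and any permutation of the first three slots stays in $\Delta(\mathcal{A})$; averaging the defining identity over the three cyclic arrangements of $(f_{1}, f_{2}, f_{3})$ shows that $g := \tfrac{1}{3}(f_{1} + f_{2} + f_{3})$ satisfies $\llbracket g(x), y, z \rrbracket + \llbracket x, g(y), z \rrbracket + \llbracket x, y, g(z) \rrbracket = f^{'}(\llbracket x, y, z \rrbracket)$, i.e. $g \in QDer(\mathcal{A})$ (here $\operatorname{char}\mathrm{F} \neq 3$ is used). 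It then remains to check that $h := f_{1} - g$ lies in $Q\Gamma(\mathcal{A})$. For this I will subtract the $g$-identity (in the symmetrized form with $f_{1}$ in the first slot — again available from the remark) from the $f_{1}$-identity $\llbracket f_{1}(x), y, z \rrbracket + \llbracket x, f_{2}(y), z \rrbracket + \llbracket x, y, f_{3}(z) \rrbracket = f^{'}(\llbracket x, y, z \rrbracket)$, and likewise using the versions with $f_{2}$ and $f_{3}$ respectively in the first slot; comparing the three resulting relations should force $\llbracket h(x), y, z \rrbracket = \llbracket x, h(y), z \rrbracket = \llbracket x, y, h(z) \rrbracket$. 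The main subtlety here is bookkeeping: I must be careful to invoke the correct permuted members of $\Delta(\mathcal{A})$ so that the $f^{'}$ terms cancel cleanly, and I expect this slot-juggling to be the one genuinely delicate point of part (1).

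For the reverse inclusion $QDer(\mathcal{A}) + Q\Gamma(\mathcal{A}) \subseteq GDer(\mathcal{A})$ I will simply cite Proposition \ref{prop:4.3}: clearly $QDer(\mathcal{A}) \subseteq GDer(\mathcal{A})$ (take $f^{'}$ to be the fourth component), and $Q\Gamma(\mathcal{A}) \subseteq GDer(\mathcal{A})$ is part (5) of that proposition; since $GDer(\mathcal{A})$ is a subspace (indeed a subalgebra of $gl(\mathcal{A})$ by Theorem \ref{thm:4.1}), the sum is contained in it. This gives (1).

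For (2), I must show $[GDer(\mathcal{A}), Q\Gamma(\mathcal{A})] \subseteq Q\Gamma(\mathcal{A})$. Using (1), write an arbitrary element of $GDer(\mathcal{A})$ as $f = f_{Q} + f_{\Gamma}$ with $f_{Q} \in QDer(\mathcal{A})$ and $f_{\Gamma} \in Q\Gamma(\mathcal{A})$. Then for $g \in Q\Gamma(\mathcal{A})$, bilinearity of the bracket gives $[f, g] = [f_{Q}, g] + [f_{\Gamma}, g]$; the first summand lies in $Q\Gamma(\mathcal{A})$ by Proposition \ref{prop:4.3}(2) and the second lies in $Q\Gamma(\mathcal{A})$ because $Q\Gamma(\mathcal{A})$ is a subalgebra of $gl(\mathcal{A})$ by Theorem \ref{thm:4.1}. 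Hence $[f, g] \in Q\Gamma(\mathcal{A})$, so $Q\Gamma(\mathcal{A})$ is an ideal. Finally, if $Z(\mathcal{A}) = \{0\}$ then Theorem \ref{thm:4.1} says $Q\Gamma(\mathcal{A})$ is abelian, which together with the ideal property gives the last assertion. I do not anticipate any obstacle in part (2); it is a formal consequence of (1), Proposition \ref{prop:4.3}, and Theorem \ref{thm:4.1}.
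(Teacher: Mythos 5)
Your proposal is correct and follows essentially the same route as the paper: the identical decomposition $f_{1} = \tfrac{1}{3}(f_{1}+f_{2}+f_{3}) + \tfrac{1}{3}(2f_{1}-f_{2}-f_{3})$, the same cyclic averaging to place the first summand in $QDer(\mathcal{A})$, the same use of the permuted members of $\Delta(\mathcal{A})$ to cancel the $f^{'}$ terms and place the second summand in $Q\Gamma(\mathcal{A})$, and the same formal derivation of (2) from (1) together with Proposition \ref{prop:4.3}(2) and Theorem \ref{thm:4.1}. The only difference is that the paper carries out the ``slot-juggling'' computation you flag as delicate explicitly, reducing the difference of the two sides to $\tfrac{1}{3}\llbracket x,y,(f_{1}-f_{3})(z)\rrbracket + \tfrac{1}{3}\llbracket x,(f_{3}-f_{1})(y),z\rrbracket$ and showing this vanishes by comparing two permuted identities.
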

\begin{proof}
(1). According to Proposition \ref{prop:4.3}, we only need to show that $GDer(\mathcal{A}) \subseteq QDer(\mathcal{A}) + Q\Gamma(\mathcal{A})$.

For any $f_{1} \in GDer(\mathcal{A})$, and $(f_{1}, f_{2}, f_{3}, f^{'}) \in \Delta(\mathcal{A})$, we see that

$(f_{1}, f_{2}, f_{3}, f^{'}) = (\frac{f_{1} + f_{2} + f_{3}}{3}, \frac{f_{1} + f_{2} + f_{3}}{3}, \frac{f_{1} + f_{2} + f_{3}}{3}, f^{'}) + (\frac{2f_{1} - f_{2} - f_{3}}{3}, \frac{2f_{2} - f_{1} - f_{3}}{3}, \frac{2f_{3} - f_{1} - f_{2}}{3}, 0)$.

Now we prove that $\frac{f_{1} + f_{2} + f_{3}}{3} \in QDer(\mathcal{A})$, and maps $\frac{2f_{1} - f_{2} - f_{3}}{3}, \frac{2f_{2} - f_{1} - f_{3}}{3}, \frac{2f_{3} - f_{1} - f_{2}}{3} \in Q\Gamma(\mathcal{A})$.

For any $x, y, z \in \mathcal{A}$, we have
\begin{align*}
&\rm{[\![\frac{\it{f}_{1} + \it{f}_{2} + \it{f}_{3}}{3}(\it{x}),\it{y},\it{z}]\!]} + \rm{[\![\it{x},\frac{f_{1} + f_{2} + f_{3}}{3}(\it{y}),\it{z}]\!]} + \rm{[\![\it{x},\it{y},\frac{f_{1} + f_{2} + f_{3}}{3}(\it{z})]\!]}\\
&= \frac{1}{3}(\rm{[\![\it{f}_{1}(\it{x}),\it{y},\it{z}]\!]} + \rm{[\![\it{x},\it{f}_{2}(\it{y}),\it{z}]\!]} + \rm{[\![\it{x},\it{y},\it{f}_{3}(\it{z})]\!]}) + \frac{1}{3}(\rm{[\![\it{f}_{2}(\it{x}),\it{y},\it{z}]\!]} + \rm{[\![\it{x},\it{f}_{3}(\it{y}),\it{z}]\!]} + \rm{[\![\it{x},\it{y},\it{f}_{1}(\it{z})]\!]})\\
&+ \frac{1}{3}(\rm{[\![\it{f}_{3}(\it{x}),\it{y},\it{z}]\!]} + \rm{[\![\it{x},\it{f}_{1}(\it{y}),\it{z}]\!]} + \rm{[\![\it{x},\it{y},\it{f}_{2}(\it{z})]\!]})\\
&= f^{'}(\rm{[\![\it{x},\it{y},\it{z}]\!]}).
\end{align*}
It follows that $\frac{f_{1} + f_{2} + f_{3}}{3} \in QDer(\mathcal{A})$.
\begin{align*}
&\rm{[\![\frac{2\it{f}_{1} - \it{f}_{2} - \it{f}_{3}}{3}(\it{x}),\it{y},\it{z}]\!]} - \rm{[\![\it{x},\frac{2f_{1} - f_{2} - f_{3}}{3}(\it{y}),\it{z}]\!]}\\
&= \frac{2}{3}\rm{[\![\it{f}_{1}(\it{x}),\it{y},\it{z}]\!]} - \frac{1}{3}\rm{[\![\it{f}_{2}(\it{x}),\it{y},\it{z}]\!]} - \frac{1}{3}\rm{[\![\it{f}_{3}(\it{x}),\it{y},\it{z}]\!]} - \frac{2}{3}\rm{[\![\it{x},\it{f}_{1}(\it{y}),\it{z}]\!]} + \frac{1}{3}\rm{[\![\it{x},\it{f}_{2}(\it{y}),\it{z}]\!]}\\
&+ \frac{1}{3}\rm{[\![\it{x},\it{f}_{3}(\it{y}),\it{z}]\!]}\\
&= \frac{2}{3}\it{f}^{'}(\rm{[\![\it{x},\it{y},\it{z}]\!]}) - \frac{2}{3}\rm{[\![\it{x},\it{f}_{2}(\it{y}),\it{z}]\!]} - \frac{2}{3}\rm{[\![\it{x},\it{y},\it{f}_{3}(\it{z})]\!]} - \frac{1}{3}\it{f}^{'}(\rm{[\![\it{x},\it{y},\it{z}]\!]}) + \frac{1}{3}\rm{[\![\it{x},\it{f}_{1}(\it{y}),\it{z}]\!]}\\
&+ \frac{1}{3}\rm{[\![\it{x},\it{y},\it{f}_{3}(\it{z})]\!]} - \frac{1}{3}\it{f}^{'}(\rm{[\![\it{x},\it{y},\it{z}]\!]}) + \frac{1}{3}\rm{[\![\it{x},\it{f}_{2}(\it{y}),\it{z}]\!]} + \frac{1}{3}\rm{[\![\it{x},\it{y},\it{f}_{1}(\it{z})]\!]} - \frac{2}{3}\rm{[\![\it{x},\it{f}_{1}(\it{y}),\it{z}]\!]}\\
&+ \frac{1}{3}\rm{[\![\it{x},\it{f}_{2}(\it{y}),\it{z}]\!]} + \frac{1}{3}\rm{[\![\it{x},\it{f}_{3}(\it{y}),\it{z}]\!]}\\
&= -\frac{1}{3}\rm{[\![\it{x},\it{f}_{1}(\it{y}),\it{z}]\!]} - \frac{1}{3}\rm{[\![\it{x},\it{y},\it{f}_{3}(\it{z})]\!]} + \frac{1}{3}\rm{[\![\it{x},\it{y},\it{f}_{1}(\it{z})]\!]} + \frac{1}{3}\rm{[\![\it{x},\it{f}_{3}(\it{y}),\it{z}]\!]}\\
&= \frac{1}{3}\rm{[\![\it{x},\it{y},(\it{f}_{1} - \it{f}_{3})(\it{z})]\!]} + \frac{1}{3}\rm{[\![\it{x},(\it{f}_{3} - \it{f}_{1})(\it{y}),\it{z}]\!]},
\end{align*}
note that
\[\it{f}^{'}(\rm{[\![\it{x},\it{y},\it{z}]\!]}) = \rm{[\![\it{f}_{2}(\it{x}),\it{y},\it{z}]\!]} + \rm{[\![\it{x},\it{f}_{1}(\it{y}),\it{z}]\!]} + \rm{[\![\it{x},\it{y},\it{f}_{3}(\it{z})]\!]},\]
\[\it{f}^{'}(\rm{[\![\it{x},\it{y},\it{z}]\!]}) = \rm{[\![\it{f}_{2}(\it{x}),\it{y},\it{z}]\!]} + \rm{[\![\it{x},\it{f}_{3}(\it{y}),\it{z}]\!]} + \rm{[\![\it{x},\it{y},\it{f}_{1}(\it{z})]\!]},\]
we have $\frac{1}{3}\rm{[\![\it{x},\it{y},(\it{f}_{1} - \it{f}_{3})(\it{z})]\!]} + \frac{1}{3}\rm{[\![\it{x},(\it{f}_{3} - \it{f}_{1})(\it{y}),\it{z}]\!]} = 0$.

So $\rm{[\![\frac{2\it{f}_{1} - \it{f}_{2} - \it{f}_{3}}{3}(\it{x}),\it{y},\it{z}]\!]} = \rm{[\![\it{x},\frac{2f_{1} - f_{2} - f_{3}}{3}(\it{y}),\it{z}]\!]}$. Similarly, we have $\rm{[\![\frac{2\it{f}_{1} - \it{f}_{2} - \it{f}_{3}}{3}(\it{x}),\it{y},\it{z}]\!]} = \rm{[\![\it{x},\it{y},\frac{2f_{1} - f_{2} - f_{3}}{3}(\it{z})]\!]}$. Therefore, $\frac{2f_{1} - f_{2} - f_{3}}{3} \in Q\Gamma(\mathcal{A})$. Similarly, we have $\frac{2f_{2} - f_{1} - f_{3}}{3}, \frac{2f_{3} - f_{1} - f_{2}}{3} \in Q\Gamma(\mathcal{A})$.

The results (1) follows.

(2). $[Q\Gamma(\mathcal{A}), GDer(\mathcal{A})] = [Q\Gamma(\mathcal{A}), QDer(\mathcal{A}) + Q\Gamma(\mathcal{A})] = [Q\Gamma(\mathcal{A}), QDer(\mathcal{A})] + [Q\Gamma(\mathcal{A}), Q\Gamma(\mathcal{A})] \subseteq Q\Gamma(\mathcal{A})$, hence $Q\Gamma(\mathcal{A})$ is an ideal of $GDer(\mathcal{A})$. According to Theorem \ref{thm:4.1}, if $Z(\mathcal{A}) = \{0\}$, $Q\Gamma(\mathcal{A})$ is abelian.
\end{proof}
\begin{thm}\label{thm:4.5}
Suppose that a ternary Jordan algebra $\mathcal{A}$ has a decomposition $\mathcal{A} = \mathcal{B} \oplus \mathcal{C}$ where $\mathcal{B}, \mathcal{C}$ are ideals of $\mathcal{A}$. Then we have
\begin{enumerate}[(1)]
\item $Z(\mathcal{A}) = Z(\mathcal{B}) \oplus Z(\mathcal{C})$;
\item If $Z(\mathcal{A}) = \{0\}$, then
     \begin{enumerate}[(a)]
     \item $Der(\mathcal{A}) = Der(\mathcal{B}) \oplus Der(\mathcal{C})$;
     \item $GDer(\mathcal{A}) = GDer(\mathcal{B}) \oplus GDer(\mathcal{C})$;
     \item $QDer(\mathcal{A}) = QDer(\mathcal{B}) \oplus QDer(\mathcal{C})$;
     \item $Q\Gamma(\mathcal{A}) = Q\Gamma(\mathcal{B}) \oplus Q\Gamma(\mathcal{C})$;
     \item $\Gamma(\mathcal{A}) = \Gamma(\mathcal{B}) \oplus \Gamma(\mathcal{C})$.
     \end{enumerate}
\end{enumerate}
\end{thm}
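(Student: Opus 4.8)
The plan is to prove the decomposition theorem in two stages: first part (1) by an elementary argument using that $\mathcal{B}$ and $\mathcal{C}$ annihilate each other, then part (2) by the standard "block-matrix" technique familiar from the Lie and Jordan cases, crucially invoking $Z(\mathcal{A}) = \{0\}$ to kill the off-diagonal blocks.

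For part (1): since $\mathcal{B}$ and $\mathcal{C}$ are ideals with $\mathcal{A} = \mathcal{B} \oplus \mathcal{C}$, I would first observe that $[\![\mathcal{B}, \mathcal{C}, \mathcal{A}]\!] \subseteq \mathcal{B} \cap \mathcal{C} = \{0\}$, so in particular $[\![\mathcal{B}, \mathcal{C}, \mathcal{A}]\!] = 0$, and likewise $[\![b, b', c]\!] = 0$ and $[\![c, c', b]\!] = 0$ for $b, b' \in \mathcal{B}$, $c, c' \in \mathcal{C}$. Then for $z = b + c \in Z(\mathcal{A})$ with $b \in \mathcal{B}$, $c \in \mathcal{C}$, testing against arbitrary $x, y$ decomposed into $\mathcal{B}$- and $\mathcal{C}$-components and using multilinearity plus the vanishing of mixed products shows $[\![b, \mathcal{B}, \mathcal{B}]\!] = 0$ and $[\![c, \mathcal{C}, \mathcal{C}]\!] = 0$, i.e. $b \in Z(\mathcal{B})$, $c \in Z(\mathcal{C})$; the reverse inclusion $Z(\mathcal{B}) \oplus Z(\mathcal{C}) \subseteq Z(\mathcal{A})$ is immediate from the same vanishing. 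Hence $Z(\mathcal{A}) = Z(\mathcal{B}) \oplus Z(\mathcal{C})$.

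For part (2), I would treat all five subalgebras $\mathcal{L} \in \{Der, GDer, QDer, Q\Gamma, \Gamma\}$ essentially uniformly. The inclusion $\mathcal{L}(\mathcal{B}) \oplus \mathcal{L}(\mathcal{C}) \subseteq \mathcal{L}(\mathcal{A})$ is routine: extend a map on $\mathcal{B}$ by zero on $\mathcal{C}$ (and vice versa), and the defining identities hold on $\mathcal{A}$ because all cross-terms vanish by part (1)'s vanishing relations. The substantive direction is $\mathcal{L}(\mathcal{A}) \subseteq \mathcal{L}(\mathcal{B}) \oplus \mathcal{L}(\mathcal{C})$. Given $f \in \mathcal{L}(\mathcal{A})$ I would show $f(\mathcal{B}) \subseteq \mathcal{B}$ and $f(\mathcal{C}) \subseteq \mathcal{C}$: write $f(b) = b_1 + c_1$ with $b_1 \in \mathcal{B}$, $c_1 \in \mathcal{C}$, and apply the defining identity of $\mathcal{L}$ to a triple of the form $[\![b, c, c']\!]$ or $[\![b, c, x]\!]$ with $c, c' \in \mathcal{C}$; the terms with $f$ applied in the $\mathcal{C}$-slots land in $\mathcal{C}$ while the left-hand-side type terms force the $\mathcal{C}$-component $c_1$ to satisfy $[\![c_1, \mathcal{C}, \mathcal{C}]\!] = 0$ or similar, i.e. $c_1 \in Z(\mathcal{C}) \subseteq Z(\mathcal{A}) = \{0\}$. (For $\Gamma$ and $Q\Gamma$ this is direct; for $GDer$ and $QDer$ one also has to track the auxiliary maps $f_2, f_3, f'$ and show they too preserve $\mathcal{B}$ and $\mathcal{C}$, which follows by the same argument applied to each slot.) Once $f$ is block-diagonal, its restrictions $f|_{\mathcal{B}}$ and $f|_{\mathcal{C}}$ inherit the defining identities, giving $f \in \mathcal{L}(\mathcal{B}) \oplus \mathcal{L}(\mathcal{C})$; directness of the sum is clear since a map is zero iff both restrictions are.

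The main obstacle is the case of $GDer(\mathcal{A})$ and $QDer(\mathcal{A})$, where the defining relation couples four maps $(f_1, f_2, f_3, f')$ rather than a single one. The delicate point is that showing $f_1$ preserves $\mathcal{B}$ is not enough in isolation: one must simultaneously show $f_2, f_3, f'$ are block-diagonal, and the identity $[\![f_1(x),y,z]\!] + [\![x,f_2(y),z]\!] + [\![x,y,f_3(z)]\!] = f'([\![x,y,z]\!])$ has to be exploited with $x, y, z$ ranging over mixed $\mathcal{B}$-$\mathcal{C}$ choices to isolate each component. The clean way is: first show $f'$ preserves $\mathcal{B}$ and $\mathcal{C}$ (take $x, y, z$ all in $\mathcal{B}$, so the right side is $f'$ of something in $\mathcal{B}$, while the left side lies in $\mathcal{B}$ by the vanishing relations, forcing the $\mathcal{C}$-part of $f'(\mathcal{B})$ into $Z(\mathcal{C}) = \{0\}$); then with $f'$ controlled, isolate each $f_i$ by choosing the other two arguments in $\mathcal{C}$ and using that mixed products vanish; finally restrict everything to $\mathcal{B}$ and to $\mathcal{C}$ to obtain the two summands. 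I would present the $Der$ and $\Gamma$ cases in full and indicate that $GDer$, $QDer$, $Q\Gamma$ follow by the same scheme with the additional bookkeeping just described.
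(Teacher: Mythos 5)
Your proposal is correct and follows essentially the same route as the paper: prove (1) by decomposing $z\in Z(\mathcal{A})$ and using that all mixed products land in $\mathcal{B}\cap\mathcal{C}=\{0\}$, then for (2) extend maps by zero for one inclusion and, for the other, use the defining identity on mixed triples to force the off-block component of $f(\mathcal{B})$ into $Z(\mathcal{C})=\{0\}$. The paper only writes out case (2)(a) and dismisses (b)--(e) as ``similar,'' so your extra bookkeeping for the auxiliary maps $(f_{2},f_{3},f')$ in the $GDer$ and $QDer$ cases is a legitimate filling-in of an omitted detail rather than a departure from the paper's method.
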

\begin{proof}
(1). Obviously, $Z(\mathcal{B}) \cap Z(\mathcal{C}) = \{0\}$. It follows from a direct computation that $Z(\mathcal{B})$ and $Z(\mathcal{C})$ are ideals of $Z(\mathcal{A})$. Therefore, we have $Z(\mathcal{B}) \oplus Z(\mathcal{C}) \subseteq Z(\mathcal{A})$.

On the other hand, for any $z \in Z(\mathcal{A})$, suppose that $z = z_{1} + z_{2}$ where $z_{1} \in \mathcal{B}$ and $z_{2} \in \mathcal{C}$. Then for any $u_{1}, v_{1} \in \mathcal{B}$, we have
\[\rm{[\![\it{z_{1}}, \it{u_{1}}, \it{v_{1}}]\!]} = \rm{[\![\it{z - z_{2}}, \it{u_{1}}, \it{v_{1}}]\!]} = 0,\]
which implies that $z_{1} \in Z(\mathcal{B})$. Similarly, $z_{2} \in Z(\mathcal{C})$. Hence, $Z(\mathcal{A}) \subseteq Z(\mathcal{B}) \oplus Z(\mathcal{C})$.

Therefore, we have $Z(\mathcal{A}) = Z(\mathcal{B}) \oplus Z(\mathcal{C})$.

(2)(a). For any $D_{1} \in Der(\mathcal{B})$, extend it to a linear map on $\mathcal{A}$ by $D_{1}(x_{1} + x_{2}) = D_{1}(x_{1})$ for any $x_{1} \in \mathcal{B}, x_{2} \in \mathcal{C}$. It's easy to show that $D_{1} \in Der(\mathcal{A})$. So $Der(\mathcal{B}) \subseteq Der(\mathcal{A})$ and $D \in Der(\mathcal{B})$ if and only if $D(\mathcal{C}) = 0$. Similarly, we have $Der(\mathcal{C}) \subseteq Der(\mathcal{A})$ and $D \in Der(\mathcal{C})$ if and only if $D(\mathcal{B}) = 0$. Therefore, we have $Der(\mathcal{B}) + Der(\mathcal{C}) \subseteq Der(\mathcal{A})$ and $Der(\mathcal{B}) \cap Der(\mathcal{C}) = \{0\}$.

On the other hand, suppose that $D \in Der(\mathcal{A})$. For any $a_{1} \in \mathcal{B}$, $b_{2}, c_{2} \in \mathcal{C}$, we have
\begin{align*}
&\rm{[\![\it{D(a_{1})}, \it{b_{2}}, \it{c_{2}}]\!]} = D(\rm{[\![\it{a_{1}}, \it{b_{2}}, \it{c_{2}}]\!]}) - \rm{[\![\it{a_{1}}, \it{D(b_{2})}, \it{c_{2}}]\!]} - \rm{[\![\it{a_{1}}, \it{b_{2}}, \it{D(c_{2})}]\!]}\\
&= - \rm{[\![\it{a_{1}}, \it{D(b_{2})}, \it{c_{2}}]\!]} - \rm{[\![\it{a_{1}}, \it{b_{2}}, \it{D(c_{2})}]\!]} \subseteq \mathcal{B} \cap \mathcal{C} = 0.
\end{align*}
Suppose that $D(a_{1}) = z_{1} + z_{2}$ where $z_{1} \in \mathcal{B}, z_{2} \in \mathcal{C}$. Then
\[0 = \rm{[\![\it{D(a_{1})}, \it{b_{2}}, \it{c_{2}}]\!]} = \rm{[\![\it{z_{1}}, \it{b_{2}}, \it{c_{2}}]\!]} + \rm{[\![\it{z_{2}}, \it{b_{2}}, \it{c_{2}}]\!]} = \rm{[\![\it{z_{2}}, \it{b_{2}}, \it{c_{2}}]\!]},\]
which implies that $z_{2} \in Z(\mathcal{C})$. Note that $Z(\mathcal{A}) = \{0\}$, $Z(\mathcal{C}) = \{0\}$, so $z_{2} = 0$. Hence $D(a_{1}) \in \mathcal{B}$, i.e., $D(\mathcal{B}) \subseteq \mathcal{B}$. Similarly, we have $D(\mathcal{C}) \subseteq \mathcal{C}$.

Next we'll show that $Der(\mathcal{B})$ is an ideal of $Der(\mathcal{A})$. Suppose that $D_{1} \in Der(\mathcal{B})$, $D \in Der(\mathcal{A})$ and $x_{2} \in \mathcal{C}$. Then
\[[D_{1}, D](x_{2}) = D_{1}D(x_{2}) - DD_{1}(x_{2}) = 0,\]
which implies that $[D_{1}, D] \in Der(\mathcal{B})$, i.e., $Der(\mathcal{B})$ is an ideal of $Der(\mathcal{A})$. Similarly, we have $Der(\mathcal{C}) \triangleleft Der(\mathcal{A})$.

In the last, we show that $Der(\mathcal{A}) = Der(\mathcal{B}) \oplus Der(\mathcal{C})$. For any $D \in Der(\mathcal{A})$, suppose that $x = x_{1} + x_{2}$ where $x_{1} \in \mathcal{B}, x_{2} \in \mathcal{C}$. Define $D_{1}, D_{2}$ as follows:
\begin{equation}
\left\{
\begin{aligned}
D_{1}(x_{1} + x_{2}) = D(x_{1}),\\
D_{2}(x_{1} + x_{2}) = D(x_{2}).
\end{aligned}
\right.
\end{equation}
Obviously, $D = D_{1} + D_{2}$. It's easy to show that $D_{1} \in Der(\mathcal{B})$ and $D_{2} \in Der(\mathcal{C})$. Therefore, $Der(\mathcal{A}) = Der(\mathcal{B}) \oplus Der(\mathcal{C})$.

(b),(c),(d),(e). Similarly to the proof of (a).
\end{proof}
\begin{thm}\label{thm:4.6}
Let $\mathcal{A}$ be a ternary Jordan algebra over $\rm{F}$. Then $ZDer(\mathcal{A}) = \Gamma(\mathcal{A}) \cap Der(\mathcal{A})$.
\end{thm}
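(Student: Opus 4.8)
The plan is to prove the two inclusions separately, exploiting throughout the elementary fact that any element of $Z(\mathcal{A})$ annihilates every ternary product, and that $char\,\mathrm{F}\neq 2$ is in force throughout Section~\ref{se:4}.

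For the inclusion $ZDer(\mathcal{A})\subseteq \Gamma(\mathcal{A})\cap Der(\mathcal{A})$, I would take $f\in ZDer(\mathcal{A})$ and note that $f(x)\in Z(\mathcal{A})$ forces $[\![f(x),y,z]\!]=0$, and by the symmetry of the product also $[\![x,f(y),z]\!]=[\![x,y,f(z)]\!]=0$; moreover $f([\![x,y,z]\!])=0$ since $f$ kills $[\![\mathcal{A},\mathcal{A},\mathcal{A}]\!]$. Hence all four quantities appearing in the defining identities of $Der(\mathcal{A})$ and $\Gamma(\mathcal{A})$ vanish simultaneously, so both identities hold trivially and $f\in\Gamma(\mathcal{A})\cap Der(\mathcal{A})$.

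For the reverse inclusion, I would take $f\in\Gamma(\mathcal{A})\cap Der(\mathcal{A})$ and combine the two conditions. The centroid condition gives $[\![f(x),y,z]\!]=[\![x,f(y),z]\!]=[\![x,y,f(z)]\!]=f([\![x,y,z]\!])$ for all $x,y,z\in\mathcal{A}$; substituting these three equal terms into the derivation identity yields $3f([\![x,y,z]\!])=f([\![x,y,z]\!])$, whence $2f([\![x,y,z]\!])=0$ and, since $char\,\mathrm{F}\neq 2$, $f([\![x,y,z]\!])=0$. Thus $f([\![\mathcal{A},\mathcal{A},\mathcal{A}]\!])=0$. Applying the centroid identity once more, $[\![f(x),y,z]\!]=f([\![x,y,z]\!])=0$ for all $y,z\in\mathcal{A}$, so $f(x)\in Z(\mathcal{A})$; therefore $f(\mathcal{A})\subseteq Z(\mathcal{A})$ and $f\in ZDer(\mathcal{A})$.

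The argument is a short chain of substitutions with no real obstacle; the only point needing attention is the passage from $3f([\![x,y,z]\!])=f([\![x,y,z]\!])$ to $f([\![x,y,z]\!])=0$, which is exactly where the standing hypothesis $char\,\mathrm{F}\neq 2$ is used.
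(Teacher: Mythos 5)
Your proposal is correct and follows essentially the same route as the paper's own proof: both directions are handled by the same substitutions, with the key step $3f([\![x,y,z]\!])=f([\![x,y,z]\!])\Rightarrow f([\![x,y,z]\!])=0$ resting on the standing hypothesis $char\,\mathrm{F}\neq 2$ of Section~\ref{se:4}, a point the paper uses implicitly and you make explicit.
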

\begin{proof}
Assume that $f \in \Gamma(\mathcal{A}) \cap Der(\mathcal{A})$, then for any $x, y, z \in \mathcal{A}$, we have
\[\it{f}(\rm{[\![\it{x},\it{y},\it{z}]\!]}) = \rm{[\![\it{f}(\it{x}),\it{y},\it{z}]\!]} + \rm{[\![\it{x},\it{f}(\it{y}),\it{z}]\!]} + \rm{[\![\it{x},\it{y},\it{f}(\it{z})]\!]},\]
\[\it{f}(\rm{[\![\it{x},\it{y},\it{z}]\!]}) = \rm{[\![\it{f}(\it{x}),\it{y},\it{z}]\!]} = \rm{[\![\it{x},\it{f}(\it{y}),\it{z}]\!]} = \rm{[\![\it{x},\it{y},\it{f}(\it{z})]\!]},\]
hence we have $\it{f}(\rm{[\![\it{x},\it{y},\it{z}]\!]}) = 0$, i.e., $\it{f}(\rm{[\![\mathcal{A},\mathcal{A},\mathcal{A}]\!]}) = 0$.
\begin{align*}
&\rm{[\![\it{f}(\it{x}),\it{y},\it{z}]\!]} = \it{f}(\rm{[\![\it{x},\it{y},\it{z}]\!]}) = 0,
\end{align*}
which implies that $f(x) \in Z(\mathcal{A})$, i.e., $f(\mathcal{A}) \subseteq Z(\mathcal{A})$. Therefore, $f \in ZDer(\mathcal{A})$, i.e., $\Gamma(\mathcal{A}) \cap Der(\mathcal{A}) \subseteq ZDer(\mathcal{A})$.

On the other hand, suppose that $f \in ZDer(\mathcal{A})$. Obviously, $f \in Der(\mathcal{A})$. For any $x, y, z \in \mathcal{A}$, we have
\[\it{f}(\rm{[\![\it{x},\it{y},\it{z}]\!]}) = 0,\]
\[\rm{[\![\it{f}(\it{x}),\it{y},\it{z}]\!]} = \rm{[\![\it{x},\it{f}(\it{y}),\it{z}]\!]} = \rm{[\![\it{x},\it{y},\it{f}(\it{z})]\!]} = 0,\]
hence we have $\it{f}(\rm{[\![\it{x},\it{y},\it{z}]\!]}) = \rm{[\![\it{f}(\it{x}),\it{y},\it{z}]\!]} = \rm{[\![\it{x},\it{f}(\it{y}),\it{z}]\!]} = \rm{[\![\it{x},\it{y},\it{f}(\it{z})]\!]}$, which implies that $f \in \Gamma(\mathcal{A})$. Therefore, $f \in \Gamma(\mathcal{A}) \cap Der(\mathcal{A})$, i.e., $ZDer(\mathcal{A}) \subseteq \Gamma(\mathcal{A}) \cap Der(\mathcal{A})$.

Therefore, $ZDer(\mathcal{A}) = \Gamma(\mathcal{A}) \cap Der(\mathcal{A})$.
\end{proof}
\begin{thm}\label{thm:4.7}
Suppose that $\mathcal{A}$ is a ternary Jordan algebra. Then $(Q\Gamma(\mathcal{A}), \bullet)$ is a Jordan algebra with
\[f \bullet g = fg + gf, \quad\forall f, g \in Q\Gamma(\mathcal{A}).\]
\end{thm}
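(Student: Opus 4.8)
The plan is to identify $(Q\Gamma(\mathcal{A}),\bullet)$ with a subalgebra of the special Jordan algebra attached to the associative algebra $\mathrm{End}(\mathcal{A})$. Recall the classical fact that, over a field of characteristic $\neq 2$, the symmetrized product $f\bullet g=fg+gf$ (which is twice the usual special Jordan product $\tfrac12(fg+gf)$) turns the underlying vector space of an associative algebra into a Jordan algebra; indeed the two Jordan axioms are polynomial identities and are invariant under rescaling of the multiplication by a nonzero scalar. Since these axioms are identities, they pass automatically to every subspace of $\mathrm{End}(\mathcal{A})$ that is closed under $\bullet$. Hence the theorem reduces to two points: (i) $Q\Gamma(\mathcal{A})$ is a linear subspace of $\mathrm{End}(\mathcal{A})$, which is immediate from its definition as the span of the endomorphisms satisfying a homogeneous linear system; and (ii) $Q\Gamma(\mathcal{A})$ is closed under $\bullet$.

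The heart of the argument is step (ii). Let $f,g\in Q\Gamma(\mathcal{A})$ and $x,y,z\in\mathcal{A}$. Using the defining property of $f$ to shift it from the first into the second argument, and then that of $g$ to do the same,
\[
[\![fg(x),y,z]\!]=[\![g(x),f(y),z]\!]=[\![x,gf(y),z]\!],
\]
and, symmetrically, $[\![gf(x),y,z]\!]=[\![f(x),g(y),z]\!]=[\![x,fg(y),z]\!]$. Adding these two identities yields $[\![(fg+gf)(x),y,z]\!]=[\![x,(fg+gf)(y),z]\!]$. The remaining equality $[\![(fg+gf)(x),y,z]\!]=[\![x,y,(fg+gf)(z)]\!]$ then follows at once from the total symmetry of $[\![\cdot,\cdot,\cdot]\!]$ (apply the identity just established after interchanging $y$ and $z$). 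Therefore $fg+gf\in Q\Gamma(\mathcal{A})$, which is step (ii).

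Combining (i) and (ii), $(Q\Gamma(\mathcal{A}),\bullet)$ is a subalgebra of the Jordan algebra $(\mathrm{End}(\mathcal{A}),\bullet)$, hence itself a Jordan algebra. I do not expect a genuine obstacle here; the one subtlety worth flagging is that $Q\Gamma(\mathcal{A})$ is in general not closed under ordinary composition (cf. Remark \ref{re:4.2}), so one cannot simply invoke "associative implies special Jordan" — closure under the symmetrized product $\bullet$ must be verified directly, and that verification is exactly the short computation above, resting entirely on the characteristic ability of elements of $Q\Gamma(\mathcal{A})$ to be moved freely among the three slots of the ternary product.
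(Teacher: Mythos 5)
Your proposal is correct and takes essentially the same route as the paper: both arguments reduce the theorem to showing that $Q\Gamma(\mathcal{A})$ is closed under $f\bullet g=fg+gf$, and both verify this by the same computation, shuttling $f$ and $g$ between the slots of $\rm{[\![\cdot,\cdot,\cdot]\!]}$ to get $\rm{[\![(\it{fg}+\it{gf})(\it{x}),\it{y},\it{z}]\!]}=\rm{[\![\it{x},(\it{fg}+\it{gf})(\it{y}),\it{z}]\!]}$ and then invoking symmetry for the third slot, so that $(Q\Gamma(\mathcal{A}),\bullet)$ is a subalgebra of the special Jordan algebra $(\rm{End}(\mathcal{A}),\bullet)$. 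Your explicit remark that closure under composition is not available and that closure under $\bullet$ alone must be checked is a fair point the paper leaves implicit.
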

\begin{proof}
We only need to show that $(Q\Gamma(\mathcal{A}), \bullet)$ is a subalgebra of $(\rm{End}(\mathcal{A}), \bullet)$. For any $f, g \in Q\Gamma(\mathcal{A})$ and $x, y, z \in \mathcal{A}$, we have
\begin{align*}
&\rm{[\![\it{f}\bullet\it{g}(\it{x}),\it{y},\it{z}]\!]} = \rm{[\![\it{f}\it{g}(\it{x}),\it{y},\it{z}]\!]} + \rm{[\![\it{g}\it{f}(\it{x}),\it{y},\it{z}]\!]}\\
&= \rm{[\![\it{g}(\it{x}),\it{f}({y}),\it{z}]\!]} + \rm{[\![\it{f}(\it{x}),\it{g}(\it{y}),\it{z}]\!]} = \rm{[\![\it{x},\it{g}\it{f}({y}),\it{z}]\!]} + \rm{[\![\it{x},\it{f}\it{g}(\it{y}),\it{z}]\!]}\\
&= \rm{[\![\it{x},\it{f}\bullet\it{g}(\it{y}),\it{z}]\!]},
\end{align*}
similarly we have $\rm{[\![\it{f}\bullet\it{g}(\it{x}),\it{y},\it{z}]\!]} = \rm{[\![\it{x},\it{y},\it{f}\bullet\it{g}(\it{z})]\!]}$. Hence, $f \bullet g \in Q\Gamma(\mathcal{A})$. Therefore, $(Q\Gamma(\mathcal{A}), \bullet)$ is a Jordan algebra.
\end{proof}
\section{Quasiderivations of ternary Jordan algebras}\label{se:5}
In this section, we'll show that quasiderivations of a ternary Jordan algebra can be embedded as derivations into a lager ternary Jordan algebra. And we also obtain a direct sum decomposition of $Der(\mathcal{\tilde{A}})$ when $Z(\mathcal{A}) = \{0\}$. Suppose that $t$ is an indeterminant. We consider the linear space $\rm{F}[\it{t}]/\langle\it{t}^{4}\rangle$. For convenience, let $t^{i}$ denote the congruence class of $t^{i}$ in $\rm{F}[\it{t}]/\langle\it{t}^{4}\rangle$.
\begin{prop}\label{prop:5.1}
Let $\mathcal{A}$ be a ternary Jordan algebra over $\rm{F}$. We define $\tilde{\mathcal{A}}:= \{\sum(x \otimes t + y \otimes t^{2} + z \otimes t^{3}) \mid x, y, z \in \mathcal{A}\} \subseteq \mathcal{A} \otimes (\rm{F}[\it{t}]/\langle\it{t}^{4}\rangle)$ and $\rm{[\![\it{x \otimes t^{i}},\it{y \otimes t^{j}},\it{z \otimes t^{k}}]\!]} = \rm{[\![\it{x},\it{y},\it{z}]\!]} \otimes \it{t^{i + j + k}}$ where $x, y, z \in \mathcal{A}$ and $i, j, k \in \{1, 2, 3\}$. Then $(\tilde{\mathcal{A}}, \rm{[\![\cdot,\cdot,\cdot]\!]})$ is a ternary Jordan algebra.
\end{prop}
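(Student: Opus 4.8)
The plan is to exploit the heavy degeneracy of the bracket on $\tilde{\mathcal{A}}$ caused by truncation modulo $t^{4}$. Every spanning element $x\otimes t^{i}$ of $\tilde{\mathcal{A}}$ has $i\in\{1,2,3\}$, so any triple product $[\![x\otimes t^{i},y\otimes t^{j},z\otimes t^{k}]\!]=[\![x,y,z]\!]\otimes t^{i+j+k}$ has exponent $i+j+k\geq 3$; since $t^{m}=0$ in $\mathrm{F}[t]/\langle t^{4}\rangle$ for $m\geq 4$, such a product is nonzero only when $i=j=k=1$, in which case it equals $[\![x,y,z]\!]\otimes t^{3}\in\mathcal{A}\otimes t^{3}\subseteq\tilde{\mathcal{A}}$. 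Thus the first step is just this observation: $\tilde{\mathcal{A}}$ is closed under $[\![\cdot,\cdot,\cdot]\!]$, the ternary product is well defined on it, and its image lies in $\mathcal{A}\otimes t^{3}$.

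For axiom (1), by multilinearity it suffices to verify permutation invariance on monomials $x_{\ell}\otimes t^{i_{\ell}}$. There $[\![x_{\sigma(1)}\otimes t^{i_{\sigma(1)}},x_{\sigma(2)}\otimes t^{i_{\sigma(2)}},x_{\sigma(3)}\otimes t^{i_{\sigma(3)}}]\!]=[\![x_{\sigma(1)},x_{\sigma(2)},x_{\sigma(3)}]\!]\otimes t^{i_{1}+i_{2}+i_{3}}=[\![x_{1},x_{2},x_{3}]\!]\otimes t^{i_{1}+i_{2}+i_{3}}$, using that the bracket of $\mathcal{A}$ is totally symmetric and that the exponent $i_{1}+i_{2}+i_{3}$ does not depend on the ordering; this is exactly $[\![x_{1}\otimes t^{i_{1}},x_{2}\otimes t^{i_{2}},x_{3}\otimes t^{i_{3}}]\!]$, so axiom (1) holds.

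For axiom (2), I would show the stronger statement that every operator $[R_{(a,b)},R_{(c,d)}]$ with $a,b,c,d\in\tilde{\mathcal{A}}$ is the zero map on $\tilde{\mathcal{A}}$, which is trivially a derivation. Writing $a=\sum_{i}a_{i}\otimes t^{i}$ and similarly for the others, the degree count above gives $R_{(a,b)}(w)=[\![w_{1},a_{1},b_{1}]\!]\otimes t^{3}\in\mathcal{A}\otimes t^{3}$ for every $w\in\tilde{\mathcal{A}}$, where $w_{1},a_{1},b_{1}$ denote the $t$-components. Any element of $\mathcal{A}\otimes t^{3}$ has zero $t$-component, so applying $R_{(c,d)}$ to it yields $[\![0,c_{1},d_{1}]\!]\otimes t^{3}=0$ (equivalently, $3+1+1\geq 4$). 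Hence $R_{(c,d)}R_{(a,b)}=0$ on $\tilde{\mathcal{A}}$, and symmetrically $R_{(a,b)}R_{(c,d)}=0$, so $[R_{(a,b)},R_{(c,d)}]=0\in Der(\tilde{\mathcal{A}})$. Both axioms being verified, $(\tilde{\mathcal{A}},[\![\cdot,\cdot,\cdot]\!])$ is a ternary Jordan algebra.

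There is essentially no hard step here; the only thing to get right is the exponent bookkeeping modulo $t^{4}$, which is simultaneously what forces the closure of $\tilde{\mathcal{A}}$ and the vanishing of all twofold compositions of right multiplications. As an alternative route, one can observe that $\mathrm{F}[t]/\langle t^{4}\rangle$ is a commutative associative algebra, so by the tensor-construction proposition of Section~\ref{se:3} the algebra $\mathcal{A}\otimes(\mathrm{F}[t]/\langle t^{4}\rangle)$ is a ternary Jordan algebra; since $t\,\mathrm{F}[t]/\langle t^{4}\rangle$ is an ideal of $\mathrm{F}[t]/\langle t^{4}\rangle$, the subspace $\tilde{\mathcal{A}}=\mathcal{A}\otimes t\,\mathrm{F}[t]/\langle t^{4}\rangle$ is an ideal, hence a subalgebra, of it, and a subalgebra of a ternary Jordan algebra inherits both axioms.
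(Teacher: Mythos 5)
Your proposal is correct, but it proves the proposition by a genuinely different (and more economical) route than the paper. The paper verifies axiom (2) by pushing the operator $D_{(x_{1}\otimes t^{i_{1}},x_{2}\otimes t^{i_{2}}),(y_{1}\otimes t^{j_{1}},y_{2}\otimes t^{j_{2}})}$ through the tensor factor, reducing everything to $D_{(x_{1},x_{2}),(y_{1},y_{2})}\in Der(\mathcal{A})$ and the Leibniz identity in $\mathcal{A}$; since every term there carries $t^{\alpha+\beta+\gamma+i_{1}+i_{2}+j_{1}+j_{2}}$ with exponent at least $7$, that computation is formally valid but is in effect an elaborate verification of $0=0$. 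You instead make the degeneracy explicit: the image of the bracket lies in $\mathcal{A}\otimes t^{3}$, so $R_{(c,d)}R_{(a,b)}=0$ and every commutator $[R_{(a,b)},R_{(c,d)}]$ is the zero map, which is trivially a derivation. This is shorter, and it exposes something the paper's write-up obscures, namely that axiom (2) for $\tilde{\mathcal{A}}$ uses only the total symmetry of the product on $\mathcal{A}$ and the truncation at $t^{4}$, not the Jordan axiom of $\mathcal{A}$ itself. Your alternative route --- citing the tensor-product construction of Section \ref{se:3} with $\mathscr{A}=\mathrm{F}[t]/\langle t^{4}\rangle$ and observing that $\tilde{\mathcal{A}}=\mathcal{A}\otimes t\,\mathrm{F}[t]/\langle t^{4}\rangle$ is an ideal, hence a subalgebra, and that subalgebras inherit both axioms (the restriction of $[R_{(a,b)},R_{(c,d)}]$ preserves the subalgebra and remains a derivation) --- is also sound, though it leans on a proposition the paper itself only justifies by ``a direct computation.'' Both of your arguments are complete; the only point worth polishing is the phrase $[\![0,c_{1},d_{1}]\!]\otimes t^{3}$, which is better stated directly as the degree count $3+1+1\geq 4$.
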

\begin{proof}
It's obvious that $\rm{[\![\cdot,\cdot,\cdot]\!]}$ is symmetric.

For any $x_{1} \otimes t^{i_{1}}, x_{2} \otimes t^{i_{2}}, y_{1} \otimes t^{j_{1}}, y_{2} \otimes t^{j_{2}}, a \otimes t^{\alpha}, b \otimes t^{\beta}, c \otimes t^{\gamma} \in \mathcal{A}$, we have
\begin{align*}
&D_{(x_{1} \otimes t^{i_{1}}, x_{2} \otimes t^{i_{2}}), (y_{1} \otimes t^{j_{1}}, y_{2} \otimes t^{j_{2}})}(\rm{[\![\it{a \otimes t^{\alpha}},\it{b \otimes t^{\beta}},\it{c \otimes t^{\gamma}}]\!]})\\
&= D_{(x_{1} \otimes t^{i_{1}}, x_{2} \otimes t^{i_{2}}), (y_{1} \otimes t^{j_{1}}, y_{2} \otimes t^{j_{2}})}(\rm{[\![\it{a},\it{b},\it{c}]\!]} \otimes \it{t^{\alpha + \beta + \gamma}})\\
&= \rm{[\![\rm{[\![\rm{[\![\it{a},\it{b},\it{c}]\!]} \otimes \it{t^{\alpha + \beta + \gamma}},\it{x_{1} \otimes t^{i_{1}}},\it{x_{2} \otimes t^{i_{2}}}]\!]},\it{y_{1} \otimes t^{j_{1}}}, \it{y_{2} \otimes t^{j_{2}}}]\!]}\\
&- \rm{[\![\rm{[\![\rm{[\![\it{a},\it{b},\it{c}]\!]} \otimes \it{t^{\alpha + \beta + \gamma}},\it{y_{1} \otimes t^{j_{1}}},\it{y_{2} \otimes t^{j_{2}}}]\!]},\it{x_{1} \otimes t^{i_{1}}}, \it{x_{2} \otimes t^{i_{2}}}]\!]}\\
&= (\rm{[\![\rm{[\![\rm{[\![\it{a},\it{b},\it{c}]\!]},\it{x_{1}},\it{x_{2}}]\!]},\it{y_{1}}, \it{y_{2}}]\!]} - \rm{[\![\rm{[\![\rm{[\![\it{a},\it{b},\it{c}]\!]},\it{y_{1}},\it{y_{2}}]\!]},\it{x_{1}}, \it{x_{2}}]\!]}) \otimes \it{t^{\alpha + \beta + \gamma + i_{1} + i_{2} + j_{1} + j_{2}}}\\
&= D_{(x_{1}, x_{2}), (y_{1}, y_{2})}(\rm{[\![\it{a},\it{b},\it{c}]\!]}) \otimes \it{t^{\alpha + \beta + \gamma + i_{1} + i_{2} + j_{1} + j_{2}}}\\
&= (\rm{[\![\it{D_{(x_{1}, x_{2}), (y_{1}, y_{2})}}(\it{a}),\it{b},\it{c}]\!]} + \rm{[\![\it{a},D_{(x_{1}, x_{2}), (y_{1}, y_{2})}(\it{b}),\it{c}]\!]} + \rm{[\![\it{a},\it{b},D_{(x_{1}, x_{2}), (y_{1}, y_{2})}(\it{c})]\!]})\\
&\otimes \it{t^{\alpha + \beta + \gamma + i_{1} + i_{2} + j_{1} + j_{2}}}\\
&= \rm{[\![\it{D_{(x_{1} \otimes t^{i_{1}}, x_{2} \otimes t^{i_{2}}), (y_{1} \otimes t^{j_{1}}, y_{2} \otimes t^{j_{2}})}}(\it{a \otimes t^{\alpha}}),\it{b \otimes t^{\beta}},\it{c \otimes t^{\gamma}}]\!]}\\
&+ \rm{[\![\it{a \otimes t^{\alpha}},D_{(x_{1} \otimes t^{i_{1}}, x_{2} \otimes t^{i_{2}}), (y_{1} \otimes t^{j_{1}}, y_{2} \otimes t^{j_{2}})}(\it{b \otimes t^{\beta}}),\it{c \otimes t^{\gamma}}]\!]}\\
&+ \rm{[\![\it{a \otimes t^{\alpha}},\it{b \otimes t^{\beta}},D_{(x_{1} \otimes t^{i_{1}}, x_{2} \otimes t^{i_{2}}), (y_{1} \otimes t^{j_{1}}, y_{2} \otimes t^{j_{2}})}(\it{c \otimes t^{\gamma}})]\!]},
\end{align*}
which implies that $D_{(x_{1} \otimes t^{i_{1}}, x_{2} \otimes t^{i_{2}}), (y_{1} \otimes t^{j_{1}}, y_{2} \otimes t^{j_{2}})} \in Der(\tilde{\mathcal{A}})$. Therefore, $(\tilde{\mathcal{A}}, \rm{[\![\cdot,\cdot,\cdot]\!]})$ is a ternary Jordan algebra.
\end{proof}

For convenience, we write $xt, xt^{2}, xt^{3}$ in place of $x \otimes t, x \otimes t^{2}, x \otimes t^{3}$. Then the multiplication of $\tilde{\mathcal{A}}$ is
\[\rm{[\![\it{xt},\it{yt},\it{zt}]\!]} = \rm{[\![\it{x},\it{y},\it{z}]\!]}\it{t^{3}}, \quad\forall x, y, z \in \mathcal{A},\]
and $\rm{[\![\it{xt^{i}},\it{yt^{j}},\it{zt^{k}}]\!]} = 0$ in the case at least one of $i, j, k$ is larger than $1$.

Let $U$ be a subspace of $\mathcal{A}$ satisfying $\mathcal{A} = U \oplus \rm{[\![\mathcal{A},\mathcal{A},\mathcal{A}]\!]}$. Then
\[\tilde{\mathcal{A}} = \mathcal{A}t + \mathcal{A}t^{2} + \mathcal{A}t^{3} = \mathcal{A}t + \mathcal{A}t^{2} + \rm{[\![\mathcal{A},\mathcal{A},\mathcal{A}]\!]}\it{t}^{3} + U\it{t}^{3}.\]

Define a linear map $l_{u} : QDer(\mathcal{A}) \rightarrow \rm{End}(\tilde{\mathcal{A}})$ by
\[l_{u}(f)(at + bt^{2} + ct^{3} + ut^{3}) = f(a)t + f^{'}(c)t^{3}\]
where $(f,f,f,f^{'}) \in \Delta(\mathcal{A})$ and $a, b \in \mathcal{A}, c \in \rm{[\![\mathcal{A},\mathcal{A},\mathcal{A}]\!]}, \it{u \in U}$.
\begin{thm}\label{thm:5.2}
$\mathcal{A}, \tilde{\mathcal{A}}, l_{u}$ are defines as above. Then
\begin{enumerate}[(1)]
\item $l_{u}$ is injective and does not depend on the choice of $f^{'}$;
\item $l_{u}(QDer(\mathcal{A})) \subseteq Der(\tilde{\mathcal{A}})$.
\end{enumerate}
\end{thm}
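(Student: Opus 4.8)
The plan is to verify the two assertions in turn, relying essentially on the fact that $(f,f,f,f')\in\Delta(\mathcal A)$ and on the multiplication table of $\tilde{\mathcal A}$, in which every product lands in $\llbracket\mathcal A,\mathcal A,\mathcal A\rrbracket t^3\subseteq U^{\perp}t^3$ and every product in which some factor lies in $\mathcal At^2+\mathcal At^3$ vanishes.

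For part (1), I would first address the independence of $l_u$ from the choice of $f'$. If $(f,f,f,f')$ and $(f,f,f,f'')$ both lie in $\Delta(\mathcal A)$, then subtracting the defining identities gives $(f'-f'')(\llbracket x,y,z\rrbracket)=0$ for all $x,y,z$, i.e.\ $f'-f''$ annihilates $\llbracket\mathcal A,\mathcal A,\mathcal A\rrbracket$. Since $l_u(f)$ only ever applies $f'$ to elements $c\in\llbracket\mathcal A,\mathcal A,\mathcal A\rrbracket$, the map $l_u(f)$ is unchanged; hence $f'$ may be chosen once and for all for each $f$, and $l_u$ is well defined. For injectivity, suppose $l_u(f)=0$. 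Applying it to $at$ for arbitrary $a\in\mathcal A$ gives $f(a)t=0$, so $f=0$; thus $\ker l_u=\{0\}$. Linearity of $l_u$ in $f$ (and of the assignment $f\mapsto f'$, which can be made linear by the same subtraction argument, or simply by fixing a linear section) is routine.

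For part (2), fix $f\in QDer(\mathcal A)$ with $(f,f,f,f')\in\Delta(\mathcal A)$ and write $F:=l_u(f)$. I must check $F\in Der(\tilde{\mathcal A})$, i.e.\ for all $\xi,\eta,\zeta\in\tilde{\mathcal A}$,
\[
F(\llbracket \xi,\eta,\zeta\rrbracket)=\llbracket F(\xi),\eta,\zeta\rrbracket+\llbracket \xi,F(\eta),\zeta\rrbracket+\llbracket \xi,\eta,F(\zeta)\rrbracket .
\]
By multilinearity and symmetry it suffices to take $\xi=a_1t+b_1t^2+c_1t^3+u_1t^3$ and similarly for $\eta,\zeta$. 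The only nonzero ternary products among such elements come from the $\mathcal At$-components: $\llbracket\xi,\eta,\zeta\rrbracket=\llbracket a_1,a_2,a_3\rrbracket t^3$, which lies in $\llbracket\mathcal A,\mathcal A,\mathcal A\rrbracket t^3$. Applying $F$ to it uses the $ct^3$-branch of the definition of $l_u(f)$, giving $f'(\llbracket a_1,a_2,a_3\rrbracket)t^3$. On the other side, $F(\xi)=f(a_1)t+f'(c_1)t^3$, and since the $t^2$- and $t^3$-parts of the arguments are killed in any product, only the $f(a_i)t$-pieces survive; so the right-hand side equals $\bigl(\llbracket f(a_1),a_2,a_3\rrbracket+\llbracket a_1,f(a_2),a_3\rrbracket+\llbracket a_1,a_2,f(a_3)\rrbracket\bigr)t^3$, which equals $f'(\llbracket a_1,a_2,a_3\rrbracket)t^3$ precisely because $(f,f,f,f')\in\Delta(\mathcal A)$. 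Hence the two sides agree, and $F\in Der(\tilde{\mathcal A})$.

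The computation is essentially bookkeeping; the only point requiring care—and the place I would be most careful—is the interaction between the direct-sum decomposition $\mathcal A=U\oplus\llbracket\mathcal A,\mathcal A,\mathcal A\rrbracket$ and the definition of $l_u(f)$: one must confirm that whenever $F$ is applied inside a product it is applied to an element of $\llbracket\mathcal A,\mathcal A,\mathcal A\rrbracket t^3$ (so the $f'$-branch is the one used) and never to a $Ut^3$-term, and conversely that the ``$f(a)t$'' branch is never needed on a degree-$\ge 2$ component. Since all products in $\tilde{\mathcal A}$ involving a $t^2$ or $t^3$ factor vanish, this is automatic, but it is worth stating explicitly so that the role of the splitting by $U$ (which makes $l_u$ depend on $u$, i.e.\ on the choice of $U$, but not on $f'$) is transparent.
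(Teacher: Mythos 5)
Your proposal is correct and follows essentially the same route as the paper: independence of $f'$ comes from the fact that any two admissible $f'$, $f''$ agree on $\llbracket\mathcal A,\mathcal A,\mathcal A\rrbracket$, injectivity is read off from the $\mathcal At$-component, and part (2) reduces, via the vanishing of all products involving a $t^2$ or $t^3$ factor, to the quasiderivation identity on the $\mathcal At$-components. (Only cosmetic remark: the notation $U^{\perp}$ is not defined in the paper; you mean the complement $\llbracket\mathcal A,\mathcal A,\mathcal A\rrbracket$ of $U$.)
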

\begin{proof}
(1). Suppose there exist $f, g$ such that $l_{u}(f) = l_{u}(g)$. Then we have
\[l_{u}(f)(at + bt^{2} + ct^{3} + ut^{3}) = l_{u}(g)(at + bt^{2} + ct^{3} + ut^{3}),\quad\forall a, b \in \mathcal{A}, c \in \rm{[\![\mathcal{A},\mathcal{A},\mathcal{A}]\!]}, \it{u \in U},\]
which implies that
\[f(a)t + f^{'}(c)t^{3} = g(a)t + g^{'}(c)t^{3}.\]
Hence we have $f(a) = g(a)$, which implies that $f = g$. Therefore, $l_{u}$ is injective.

Suppose that there exists $f^{''}$ such that $l_{u}(f)(at + bt^{2} + ct^{3} + ut^{3}) = f(a)t + f^{''}(c)t^{3}$ and $\rm{[\![\it{f}(\it{x}), \it{y}, \it{z}]\!]} + \rm{[\![\it{x}, \it{f}(\it{y}), \it{z}]\!]} + \rm{[\![\it{x}, \it{y}, \it{f}(\it{z})]\!]} = \it{f}^{''}(\rm{[\![\it{x}, \it{y}, \it{z}]\!]})$, then we have $\it{f}^{'}(\rm{[\![\it{x}, \it{y}, \it{z}]\!]}) = \it{f}^{''}(\rm{[\![\it{x}, \it{y}, \it{z}]\!]})$, hence we have $\it{f}^{'}(c) = \it{f}^{''}(c)$.

Hence, we have
\[l_{u}(f)(at + bt^{2} + ct^{3} + ut^{3}) = f(a)t + f^{'}(c)t^{3} = f(a)t + f^{''}(c)t^{3},\]
which implies that $l_{u}$ does not depend on the choice of $f^{'}$.

(2). For all $a_{i}t + b_{i}t^{2} + c_{i}t^{3} + u_{i}t^{3} \in \tilde{\mathcal{A}}$, $i = 1, 2, 3$, we have
\begin{align*}
&\rm{[\![\it{l_{u}(f)}(\it{a_{1}t + b_{1}t^{2} + c_{1}t^{3} + u_{1}t^{3}}), \it{a_{2}t + b_{2}t^{2} + c_{2}t^{3} + u_{2}t^{3}}, \it{a_{3}t + b_{3}t^{2} + c_{3}t^{3} + u_{3}t^{3}}]\!]}\\
&+ \rm{[\![\it{a_{1}t + b_{1}t^{2} + c_{1}t^{3} + u_{1}t^{3}}, \it{l_{u}(f)}(\it{a_{2}t + b_{2}t^{2} + c_{2}t^{3}) + u_{2}t^{3}}), \it{a_{3}t + b_{3}t^{2} + c_{3}t^{3} + u_{3}t^{3}}]\!]}\\
&+ \rm{[\![\it{a_{1}t + b_{1}t^{2} + c_{1}t^{3} + u_{1}t^{3}}, \it{a_{2}t + b_{2}t^{2} + c_{2}t^{3} + u_{2}t^{3}}, \it{l_{u}(f)}(\it{a_{3}t + b_{3}t^{2} + c_{3}t^{3} + u_{3}t^{3}})]\!]}\\
&= \rm{[\![\it{f}(\it{a_{1}}), \it{a_{2}}, \it{a_{3}}]\!]}\it{t}^{3} + \rm{[\![\it{a_{1}}, \it{f}(\it{a_{2}}), \it{a_{3}}]\!]}\it{t}^{3} + \rm{[\![\it{a_{1}}, \it{a_{2}}, \it{f}(\it{a_{3}})]\!]}\it{t}^{3}\\
&= \it{f}^{'}(\rm{[\![\it{a_{1}}, \it{a_{2}}, \it{a_{3}}]\!]})\it{t}^{3},
\end{align*}
\begin{align*}
&\it{l_{u}(f)}(\rm{[\![\it{a_{1}t + b_{1}t^{2} + c_{1}t^{3} + u_{1}t^{3}}, \it{a_{2}t + b_{2}t^{2} + c_{2}t^{3} + u_{2}t^{3}}, \it{a_{3}t + b_{3}t^{2} + c_{3}t^{3} + u_{3}t^{3}}]\!]})\\
&= \it{l_{u}(f)}(\rm{[\![\it{a_{1}}, \it{a_{2}}, \it{a_{3}}]\!]}\it{t}^{3}) = \it{f}^{'}(\rm{[\![\it{a_{1}}, \it{a_{2}}, \it{a_{3}}]\!]})\it{t}^{3}.
\end{align*}
Therefore, $l_{u}(f) \in Der(\tilde{\mathcal{A}})$. It follows that $l_{u}(QDer(\mathcal{A})) \subseteq Der(\tilde{\mathcal{A}})$.
\end{proof}
\begin{thm}\label{thm:5.3}
Let $\mathcal{A}$ be a ternary Jordan algebra. $\tilde{\mathcal{A}}, l_{u}$ are defined as above. If $Z(\mathcal{A}) = \{0\}$, then $Der(\tilde{\mathcal{A}}) = l_{u}(QDer(\mathcal{A})) \dotplus ZDer(\tilde{\mathcal{A}})$.
\end{thm}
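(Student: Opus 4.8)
The plan is to establish the two inclusions $l_{u}(QDer(\mathcal{A})) + ZDer(\tilde{\mathcal{A}}) \subseteq Der(\tilde{\mathcal{A}})$ and $Der(\tilde{\mathcal{A}}) \subseteq l_{u}(QDer(\mathcal{A})) + ZDer(\tilde{\mathcal{A}})$ together with the triviality of the intersection $l_{u}(QDer(\mathcal{A})) \cap ZDer(\tilde{\mathcal{A}}) = \{0\}$; the first inclusion is immediate from Theorem~\ref{thm:5.2}(2) and $ZDer(\tilde{\mathcal{A}}) \subseteq Der(\tilde{\mathcal{A}})$. I would begin by recording the structural facts forced by the degenerate multiplication of $\tilde{\mathcal{A}}$: since $[\![xt,yt,zt]\!] = [\![x,y,z]\!]t^{3}$ while every triple product having a factor in $\mathcal{A}t^{2} + \mathcal{A}t^{3}$ vanishes, one gets $[\![\tilde{\mathcal{A}},\tilde{\mathcal{A}},\tilde{\mathcal{A}}]\!] = [\![\mathcal{A},\mathcal{A},\mathcal{A}]\!]t^{3}$, and, using the hypothesis $Z(\mathcal{A}) = \{0\}$, $Z(\tilde{\mathcal{A}}) = \mathcal{A}t^{2} + \mathcal{A}t^{3}$. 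I would also use the elementary observations that every $D \in Der(\tilde{\mathcal{A}})$ maps $Z(\tilde{\mathcal{A}})$ into itself and $[\![\tilde{\mathcal{A}},\tilde{\mathcal{A}},\tilde{\mathcal{A}}]\!]$ into itself; combined with the identifications above this gives $D(\mathcal{A}t^{2} + \mathcal{A}t^{3}) \subseteq \mathcal{A}t^{2} + \mathcal{A}t^{3}$ and $D([\![\mathcal{A},\mathcal{A},\mathcal{A}]\!]t^{3}) \subseteq [\![\mathcal{A},\mathcal{A},\mathcal{A}]\!]t^{3}$.

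For the main inclusion, take $D \in Der(\tilde{\mathcal{A}})$ and write $D(xt) = \phi(x)t + \psi(x)t^{2} + \chi(x)t^{3}$ with linear maps $\phi, \psi, \chi \colon \mathcal{A} \to \mathcal{A}$, and $D(wt^{3}) = \theta(w)t^{3}$ for $w \in [\![\mathcal{A},\mathcal{A},\mathcal{A}]\!]$ with linear $\theta \colon [\![\mathcal{A},\mathcal{A},\mathcal{A}]\!] \to [\![\mathcal{A},\mathcal{A},\mathcal{A}]\!]$, which is legitimate by the previous paragraph. Applying $D$ to $[\![xt,yt,zt]\!] = [\![x,y,z]\!]t^{3}$ and noting that only the $t$-component of $D$ on $\mathcal{A}t$ survives inside a triple product, I obtain $[\![\phi(x),y,z]\!] + [\![x,\phi(y),z]\!] + [\![x,y,\phi(z)]\!] = \theta([\![x,y,z]\!])$, i.e. $(\phi,\phi,\phi,\theta) \in \Delta(\mathcal{A})$, so $f := \phi \in QDer(\mathcal{A})$. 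I then set $D' := D - l_{u}(f) \in Der(\tilde{\mathcal{A}})$ and check $D' \in ZDer(\tilde{\mathcal{A}})$ on each graded piece: $D'(xt) = \psi(x)t^{2} + \chi(x)t^{3} \in Z(\tilde{\mathcal{A}})$; on $\mathcal{A}t^{2}$ and on $Ut^{3}$ the map $l_{u}(f)$ vanishes while $D$ sends these into $Z(\tilde{\mathcal{A}})$; and on $[\![\mathcal{A},\mathcal{A},\mathcal{A}]\!]t^{3}$ both $D$ and $l_{u}(f)$ act by $ct^{3} \mapsto \theta(c)t^{3}$, where I invoke Theorem~\ref{thm:5.2}(1) so that $\theta$ may be chosen as the map $f'$ with $(f,f,f,f') \in \Delta(\mathcal{A})$. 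Hence $D'(\tilde{\mathcal{A}}) \subseteq Z(\tilde{\mathcal{A}})$ and $D'([\![\tilde{\mathcal{A}},\tilde{\mathcal{A}},\tilde{\mathcal{A}}]\!]) = D'([\![\mathcal{A},\mathcal{A},\mathcal{A}]\!]t^{3}) = 0$, so $D' \in ZDer(\tilde{\mathcal{A}})$ and $D = l_{u}(f) + D'$.

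Finally, for directness, if $l_{u}(f) \in ZDer(\tilde{\mathcal{A}})$ then $l_{u}(f)(xt) = f(x)t$ must lie in $Z(\tilde{\mathcal{A}}) = \mathcal{A}t^{2} + \mathcal{A}t^{3}$, which forces $f(x) = 0$ for all $x \in \mathcal{A}$, hence $f = 0$; the quasiderivation identity then forces $f'$ to vanish on $[\![\mathcal{A},\mathcal{A},\mathcal{A}]\!]$, so $l_{u}(f) = 0$. The only step that is not routine grading bookkeeping with the degenerate product of $\tilde{\mathcal{A}}$ is the identification $Z(\tilde{\mathcal{A}}) = \mathcal{A}t^{2} + \mathcal{A}t^{3}$; this is exactly where $Z(\mathcal{A}) = \{0\}$ enters, and it is what makes both the center-preserving reductions in the main inclusion and the triviality of the intersection go through, so I expect it to be the point needing the most care.
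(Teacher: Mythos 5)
Your proposal is correct and follows essentially the same route as the paper: it rests on the same key facts ($Z(\tilde{\mathcal{A}}) = \mathcal{A}t^{2} + \mathcal{A}t^{3}$ when $Z(\mathcal{A}) = \{0\}$, and the invariance of the center and of $[\![\tilde{\mathcal{A}},\tilde{\mathcal{A}},\tilde{\mathcal{A}}]\!] = [\![\mathcal{A},\mathcal{A},\mathcal{A}]\!]t^{3}$ under any derivation) and yields the identical decomposition, merely extracting the quasiderivation component from the $t$-graded piece first where the paper peels off the center-derivation component first. Your directness argument likewise matches the paper's.
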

\begin{proof}
Since $Z(\mathcal{A}) = \{0\}$, we have $Z(\tilde{\mathcal{A}}) = \mathcal{A}t^{2} + \mathcal{A}t^{3}$. Moreover, for all $g \in Der(\tilde{\mathcal{A}})$, we have $g(Z(\tilde{\mathcal{A}})) \subseteq Z(\tilde{\mathcal{A}})$.

Define a map $f : \tilde{\mathcal{A}} \rightarrow \mathcal{A}t^{2} + \mathcal{A}t^{3}$ by
$$f(x) =
\left\{
\begin{aligned}
g(x) \cap (\mathcal{A}t^{2} + \mathcal{A}t^{3}),\quad x \in \mathcal{A}t\\
g(x),\quad x \in \mathcal{A}t^{2} + Ut^{3}\\
0,\quad x \in \rm{[\![\mathcal{A},\mathcal{A},\mathcal{A}]\!]}\it{t}^{3}
\end{aligned}
\right.
$$
Obviously, $f$ is linear. Moreover,
\[\it{f}(\rm{[\![\tilde{\mathcal{A}},\tilde{\mathcal{A}},\tilde{\mathcal{A}}]\!]}) = \it{f}(\rm{[\![\mathcal{A},\mathcal{A},\mathcal{A}]\!]}\it{t}^{3}) = \rm{0},\]
by the definition of $f$, we see that $f(\tilde{\mathcal{A}}) \subseteq Z(\tilde{\mathcal{A}})$. Therefore, $f$ is an element of $ZDer(\tilde{\mathcal{A}})$.

Since
\[(g - f)(\mathcal{A}t) = g(\mathcal{A}t) - f(\mathcal{A}t) = g(\mathcal{A}t) - g(\mathcal{A}t) \cap (\mathcal{A}t^{2} + \mathcal{A}t^{3}) \subseteq \mathcal{A}t,\]
\[(g - f)(\mathcal{A}t^{2} + Ut^{3}) = g(\mathcal{A}t^{2} + Ut^{3}) - f(\mathcal{A}t^{2} + Ut^{3}) = 0,\]
\[(g - f)(\rm{[\![\mathcal{A},\mathcal{A},\mathcal{A}]\!]}\it{t}^{3}) = g(\rm{[\![\mathcal{A},\mathcal{A},\mathcal{A}]\!]}\it{t}^{3}) = g(\rm{[\![\tilde{\mathcal{A}},\tilde{\mathcal{A}},\tilde{\mathcal{A}}]\!]}) \subseteq \rm{[\![\tilde{\mathcal{A}},\tilde{\mathcal{A}},\tilde{\mathcal{A}}]\!]} = \rm{[\![\mathcal{A},\mathcal{A},\mathcal{A}]\!]}\it{t}^{3},\]
hence there exist linear maps $h, h^{'} \in \rm{End}(\mathcal{A})$ such that for all $a \in \mathcal{A}, c \in \rm{[\![\mathcal{A},\mathcal{A},\mathcal{A}]\!]}$,
\[(g - f)(at) = h(a)t, (g - f)(ct^{3}) = h^{'}(c)t^{3}.\]
Note that $g - f \in Der(\tilde{\mathcal{A}})$, we have
\[\it{(g - f)}(\rm{[\![\it{at},\it{bt},\it{ct}]\!]}) = \rm{[\![\it{(g - f)}(\it{at}),\it{bt},\it{ct}]\!]} + \rm{[\![\it{at},\it{(g - f)}(\it{bt}),\it{ct}]\!]} + \rm{[\![\it{at},\it{bt},\it{(g - f)}(\it{ct})]\!]},\]
which implies that
\[\it{h}^{'}(\rm{[\![\it{a},\it{b},\it{c}]\!]})\it{t}^{3} = \rm{[\![\it{h}(\it{a}),\it{b},\it{c}]\!]}\it{t}^{3} + \rm{[\![\it{a},\it{h}(\it{b}),\it{c}]\!]}\it{t}^{3} + \rm{[\![\it{a},\it{b},\it{h}(\it{c})]\!]}\it{t}^{3}.\]
Therefore, we see that $h \in QDer(\mathcal{A})$. So $g - f = l_{u}(h) \in l_{u}(QDer(\mathcal{A}))$. According to Theorem \ref{thm:5.2} (2), we have $Der(\tilde{\mathcal{A}}) = ZDer(\tilde{\mathcal{A}}) + l_{u}(QDer(\mathcal{A}))$.

If there exists $f \in ZDer(\tilde{\mathcal{A}}) \cap l_{u}(QDer(\mathcal{A}))$, then there exists $h \in QDer(\mathcal{A})$ such that $f = l_{u}(h)$. So
\[f(at + bt^{2} + ct^{3} + ut^{3}) = h(a)t + h^{'}(c)t^{3}.\]

On the other hand, $f(at + bt^{2} + ct^{3} + ut^{3}) \subseteq \mathcal{A}t^{2} + \mathcal{A}t^{3}$ since $f \in ZDer(\tilde{\mathcal{A}})$. So $h(a) = 0$, i.e., $h = 0$. Therefore, $f = 0$.

Hence, $Der(\tilde{\mathcal{A}}) = l_{u}(QDer(\mathcal{A})) \dotplus ZDer(\tilde{\mathcal{A}})$.
\end{proof}
\section{Ternary Jordan algebras with $QDer(\mathcal{A}) = gl(\mathcal{A})$}\label{se:6}
Let $\mathcal{A}$ be a ternary Jordan algebra over $\rm{F}$ with $char \rm{F} \neq 2$. We define a linear map $\phi : \mathcal{A} \otimes \mathcal{A} \otimes \mathcal{A} \rightarrow \mathcal{A}, x \otimes y \otimes z \mapsto \rm{[\![\it{x},\it{y},\it{z}]\!]}$. Define $Ker(\phi) = \{\sum x \otimes y \otimes z \in \mathcal{A} \otimes \mathcal{A} \otimes \mathcal{A} \mid \sum\rm{[\![\it{x},\it{y},\it{z}]\!]} = 0, \it{x}, \it{y}, \it{z} \in \mathcal{A}\}$, then it is easy to see that $Ker(\phi)$ is a subspace of $\mathcal{A} \otimes \mathcal{A} \otimes \mathcal{A}$.

We define $(\mathcal{A} \otimes \mathcal{A} \otimes \mathcal{A})^{+} = \{x \otimes y \otimes z + y \otimes x \otimes z \mid x, y, z \in \mathcal{A}\}$ and $(\mathcal{A} \otimes \mathcal{A} \otimes \mathcal{A})^{-} = \{x \otimes y \otimes z - y \otimes x \otimes z \mid x, y, z \in \mathcal{A}\}$, then both $(\mathcal{A} \otimes \mathcal{A} \otimes \mathcal{A})^{+}$ and $(\mathcal{A} \otimes \mathcal{A} \otimes \mathcal{A})^{-}$ are subspaces of $\mathcal{A} \otimes \mathcal{A} \otimes \mathcal{A}$. It is easy to check that
\[\mathcal{A} \otimes \mathcal{A} \otimes \mathcal{A} = (\mathcal{A} \otimes \mathcal{A} \otimes \mathcal{A})^{+} \dotplus (\mathcal{A} \otimes \mathcal{A} \otimes \mathcal{A})^{-}.\]
and we also have $\rm{dim}(\mathcal{A} \otimes \mathcal{A} \otimes \mathcal{A})^{+} = \it{n^{2}(n + 1)/2}$ and $\rm{dim}(\mathcal{A} \otimes \mathcal{A} \otimes \mathcal{A})^{-} = \it{n^{2}(n - 1)/2}$, where $\rm{dim}\mathcal{A} = \it{n}$.

For all $D \in gl(\mathcal{A})$, we define $D^{*} \in gl(\mathcal{A} \otimes \mathcal{A} \otimes \mathcal{A})$ satisfying that
\begin{equation}\label{eq:6.1}
D^{*}(x \otimes y \otimes z) = D(x) \otimes y \otimes z + x \otimes D(y) \otimes z + x \otimes y \otimes D(z)\tag{6.1}
\end{equation}
for all $x, y, z \in \mathcal{A}$.
\begin{thm}\label{thm:6.1}
$D \in QDer(\mathcal{A})$ if and only if $D^{*}(Ker(\phi)) \subseteq Ker(\phi)$.
\end{thm}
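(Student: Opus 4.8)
The plan is to unwind both conditions directly from the definitions and observe that they are literally the same statement, phrased on either side of the map $\phi$. Recall that $D \in QDer(\mathcal{A})$ means there is some $D' \in \mathrm{End}(\mathcal{A})$ with
\[
\rm{[\![\it{D(x)},\it{y},\it{z}]\!]} + \rm{[\![\it{x},\it{D(y)},\it{z}]\!]} + \rm{[\![\it{x},\it{y},\it{D(z)}]\!]} = \it{D'}(\rm{[\![\it{x},\it{y},\it{z}]\!]})
\]
for all $x,y,z \in \mathcal{A}$; in the language of $\phi$ and $D^{*}$ this says precisely $\phi \circ D^{*} = D' \circ \phi$ on all of $\mathcal{A}\otimes\mathcal{A}\otimes\mathcal{A}$ (extending by linearity to sums of elementary tensors). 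So the real content is the equivalence between ``$D^{*}$ descends along $\phi$ to some endomorphism of $\mathcal{A}$'' and ``$D^{*}(\mathrm{Ker}(\phi)) \subseteq \mathrm{Ker}(\phi)$''.

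First I would prove the forward direction: if $D \in QDer(\mathcal{A})$, pick $D'$ with $\phi \circ D^{*} = D' \circ \phi$. Then for any $w = \sum x\otimes y\otimes z \in \mathrm{Ker}(\phi)$ we get $\phi(D^{*}(w)) = D'(\phi(w)) = D'(0) = 0$, so $D^{*}(w) \in \mathrm{Ker}(\phi)$; this is immediate.

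For the converse I would use the standard fact from linear algebra that an endomorphism $D^{*}$ of a vector space $V = \mathcal{A}\otimes\mathcal{A}\otimes\mathcal{A}$ which preserves the kernel of a surjective linear map $\phi : V \to \mathcal{A}$ induces a well-defined linear map $\bar{D} : \mathcal{A} \to \mathcal{A}$ by $\bar{D}(\phi(v)) = \phi(D^{*}(v))$ — well-definedness is exactly the hypothesis $D^{*}(\mathrm{Ker}(\phi)) \subseteq \mathrm{Ker}(\phi)$, together with surjectivity of $\phi$ (which holds because $\phi$ maps onto $\rm{[\![\mathcal{A},\mathcal{A},\mathcal{A}]\!]}$, and one restricts the target accordingly, or simply works with the image). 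Setting $D' := \bar{D}$ and reading the identity $\phi(D^{*}(v)) = D'(\phi(v))$ on an elementary tensor $v = x\otimes y\otimes z$ recovers exactly the quasiderivation equation, so $D \in QDer(\mathcal{A})$.

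I do not expect a serious obstacle here; the only point requiring a little care is that $\phi$ is not surjective onto $\mathcal{A}$ in general but onto $\rm{[\![\mathcal{A},\mathcal{A},\mathcal{A}]\!]}$, so the induced map $D'$ is only determined on $\rm{[\![\mathcal{A},\mathcal{A},\mathcal{A}]\!]}$ and may be extended arbitrarily to the rest of $\mathcal{A}$ — which is harmless, since the quasiderivation condition only constrains $D'$ on products anyway. If one prefers to avoid even mentioning the induced map, the converse can be argued by contradiction: were $D \notin QDer(\mathcal{A})$, there would be tensors $\sum x_i\otimes y_i\otimes z_i$ and $\sum x_j'\otimes y_j'\otimes z_j'$ with $\sum \rm{[\![\it{x_i},\it{y_i},\it{z_i}]\!]} = \sum \rm{[\![\it{x_j'},\it{y_j'},\it{z_j'}]\!]}$ but $\sum(\rm{[\![\it{D(x_i)},\it{y_i},\it{z_i}]\!]}+\cdots) \neq \sum(\rm{[\![\it{D(x_j')},\it{y_j'},\it{z_j'}]\!]}+\cdots)$, i.e. the difference of the two tensors lies in $\mathrm{Ker}(\phi)$ but its image under $D^{*}$ does not, contradicting the hypothesis.
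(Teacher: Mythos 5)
Your proposal is correct and follows essentially the same route as the paper: the forward direction is the same direct computation, and for the converse the paper likewise defines $D'$ as the map induced by $\phi\circ D^{*}$ on $\rm{[\![\mathcal{A},\mathcal{A},\mathcal{A}]\!]}$ (well-defined precisely because $D^{*}$ preserves $Ker(\phi)$) and extends it by zero on a complement $U$ of $\rm{[\![\mathcal{A},\mathcal{A},\mathcal{A}]\!]}$ in $\mathcal{A}$. Your remark that the extension off the image is harmless is exactly the point the paper handles by the choice $D'(U)=0$.
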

\begin{proof}
For all $\sum x \otimes y \otimes z \in Ker(\phi)$, we have
\[D^{*}(\sum x \otimes y \otimes z) = \sum D^{*}(x \otimes y \otimes z) = \sum (D(x) \otimes y \otimes z + x \otimes D(y) \otimes z + x \otimes y \otimes D(z)),\]
since $D \in QDer(\mathcal{A})$, we have
\[\sum (\rm{[\![\it{D(x)}, \it{y}, \it{z}]\!]} + \rm{[\![\it{x}, \it{D(y)}, \it{z}]\!]} + \rm{[\![\it{x}, \it{y}, \it{D(z)}]\!]}) = \sum \it{D}^{'}(\rm{[\![\it{x}, \it{y}, \it{z}]\!]}) = \it{D}^{'}(\sum\rm{[\![\it{x}, \it{y}, \it{z}]\!]}) = 0.\]
Hence, we have $D^{*}(Ker(\phi)) \subseteq Ker(\phi)$.

Conversely, let $U$ be a complement of $\rm{[\![\mathcal{A}, \mathcal{A}, \mathcal{A}]\!]}$ in $\mathcal{A}$, that is $\mathcal{A} = \rm{[\![\mathcal{A}, \mathcal{A}, \mathcal{A}]\!]} \oplus \it{U}$. For $D \in gl(\mathcal{A})$ satisfying $D^{*}(Ker(\phi)) \subseteq Ker(\phi)$, define a linear map $D^{'} : \mathcal{A} \rightarrow \mathcal{A}$ by
$$D^{'}(z) =
\left\{
\begin{aligned}
0,\quad z\in U,\\
\sum_{i = 1}^{m}\phi(D^{*}(x_{i} \otimes y_{i} \otimes z_{i})),\quad z = \sum_{i = 1}^{m}\phi(x_{i} \otimes y_{i} \otimes z_{i}) \in \rm{[\![\mathcal{A}, \mathcal{A}, \mathcal{A}]\!]}.
\end{aligned}
\right.
$$
It follows from a direct computation that $D^{'}$ is well-defined. And we see that
\[\phi(D(x) \otimes y \otimes z) + \phi(x \otimes D(y) \otimes z) + \phi(x \otimes y \otimes D(z)) = \phi(D^{*}(x \otimes y \otimes z)) = D^{'}(\phi(x \otimes y \otimes z)).\]
It follows $D \in QDer(\mathcal{A})$.
\end{proof}
\begin{thm}\label{thm:6.2}
Suppose that $gl(\mathcal{A})$ acts on $\mathcal{A} \otimes \mathcal{A} \otimes \mathcal{A}$ via $D \cdot (x \otimes y \otimes z) = D^{*}(x \otimes y \otimes z)$. Then $(\mathcal{A} \otimes \mathcal{A} \otimes \mathcal{A})^{+}$ and $(\mathcal{A} \otimes \mathcal{A} \otimes \mathcal{A})^{-}$ are two irreducible $gl(\mathcal{A})$-modules.
\end{thm}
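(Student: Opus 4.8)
The statement claims that $(\mathcal{A}\otimes\mathcal{A}\otimes\mathcal{A})^{+}$ and $(\mathcal{A}\otimes\mathcal{A}\otimes\mathcal{A})^{-}$ are irreducible modules for $gl(\mathcal{A})$ acting through the operators $D^{*}$ defined in \eqref{eq:6.1}. The first thing to verify is that this is even a module action, i.e.\ that $D\mapsto D^{*}$ is a Lie algebra homomorphism $gl(\mathcal{A})\to gl(\mathcal{A}\otimes\mathcal{A}\otimes\mathcal{A})$. This is the standard fact that $D^{*}$ is the derivation induced by $D$ on the threefold tensor power, so $[D,E]^{*}=[D^{*},E^{*}]$; I would record it in one line. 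Next I would note that $D^{*}$ clearly preserves the symmetrization $x\otimes y\otimes z + y\otimes x\otimes z$ in the first two slots (since $D^{*}$ acts slotwise and symmetrically), so $(\mathcal{A}\otimes\mathcal{A}\otimes\mathcal{A})^{\pm}$ are indeed submodules; the decomposition $\mathcal{A}^{\otimes 3}=(\mathcal{A}^{\otimes 3})^{+}\dotplus(\mathcal{A}^{\otimes 3})^{-}$ was already stated in the excerpt.

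**Irreducibility.** Fix a basis $\{e_{1},\dots,e_{n}\}$ of $\mathcal{A}$ and let $E_{ij}\in gl(\mathcal{A})$ be the elementary matrix units, so $E_{ij}(e_{k})=\delta_{jk}e_{i}$. The key computational engine is the formula
\[
E_{ij}^{*}(e_{a}\otimes e_{b}\otimes e_{c})=\delta_{ja}\,e_{i}\otimes e_{b}\otimes e_{c}+\delta_{jb}\,e_{a}\otimes e_{i}\otimes e_{c}+\delta_{jc}\,e_{a}\otimes e_{b}\otimes e_{i}.
\]
I would run the usual "raising/lowering" argument: take a nonzero submodule $W\subseteq(\mathcal{A}^{\otimes 3})^{-}$ (the $+$ case is identical with signs adjusted) and a nonzero $w\in W$; expanding $w$ in the monomial basis $\{e_{a}\otimes e_{b}\otimes e_{c}\pm e_{b}\otimes e_{a}\otimes e_{c}\}$, apply a suitable composition of $E_{ij}^{*}$'s (and subtract off unwanted terms, using that $char\,\mathrm{F}\neq 2$ so that $2$ is invertible and the $\pm$-symmetrized basis behaves well) to isolate a single symmetrized basis vector, showing it lies in $W$. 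Then, since $gl(\mathcal{A})$ acts transitively on such symmetrized basis vectors via the $E_{ij}^{*}$, all of $(\mathcal{A}^{\otimes 3})^{-}$ lies in $W$, giving $W=(\mathcal{A}^{\otimes 3})^{-}$.

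**The main obstacle.** The genuinely delicate point is the isolation step: unlike the honest tensor-product action $D\cdot(x\otimes y\otimes z)=D(x)\otimes y\otimes z$ used in the classical proof that $V^{\otimes 3}$ decomposes into $GL(V)$-irreducibles, here $D^{*}$ is the \emph{derivation} (Leibniz) action, which spreads a single monomial across three slots simultaneously. So applying $E_{ij}^{*}$ to a monomial produces a sum of three terms rather than one, and one must carefully choose indices — e.g.\ pick $i,j$ not appearing among the indices of the target monomial, or use diagonal operators $E_{ii}^{*}$, which act as scalars counting multiplicities — so that the cross terms vanish or can be removed by further applications. Concretely I expect to argue: applying $E_{ii}^{*}$ to $e_{a}\otimes e_{b}\otimes e_{c}$ multiplies it by the number of indices among $a,b,c$ equal to $i$, and by taking $\mathrm{F}$-linear combinations of such eigenoperators (a Vandermonde/interpolation argument, valid since we may assume $|\mathrm{F}|$ large enough or argue over each relevant finite sub-configuration) one projects $w$ onto the span of monomials with a fixed multiplicity-type; then off-diagonal $E_{ij}^{*}$'s move within that span. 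Handling the symmetry constraint (the $\pm$ in the first two slots) alongside this bookkeeping is where all the care goes; once a single symmetrized monomial is trapped in $W$, transitivity of the $E_{ij}^{*}$-action finishes both cases at once.
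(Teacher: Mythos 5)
Your preliminary steps (that $D\mapsto D^{*}$ is a Lie algebra homomorphism and that the subspaces $(\mathcal{A}\otimes\mathcal{A}\otimes\mathcal{A})^{\pm}$ are invariant) are fine and match the paper. But the irreducibility part of your plan cannot be completed, because the statement itself is false once $\dim\mathcal{A}=n\geq 2$, and the failure occurs exactly at the step you flag as ``the main obstacle.'' The operator $D^{*}$ of (6.1) commutes with \emph{every} place permutation of the three tensor factors (permuting the factors of each of the three Leibniz terms and relabelling gives the same sum), so the space $S^{3}\mathcal{A}$ of \emph{fully} symmetric tensors is a $gl(\mathcal{A})$-submodule of $\mathcal{A}\otimes\mathcal{A}\otimes\mathcal{A}$, and it lies inside $(\mathcal{A}\otimes\mathcal{A}\otimes\mathcal{A})^{+}$ since $T=\tfrac{1}{2}\bigl(T+(12)T\bigr)$ for such $T$. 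Its dimension is $\binom{n+2}{3}$, which for $n\geq 2$ is strictly between $0$ and $n^{2}(n+1)/2$; for $n=2$ it is a $4$-dimensional submodule of the $6$-dimensional space $(\mathcal{A}\otimes\mathcal{A}\otimes\mathcal{A})^{+}$. Concretely, starting from $w=e_{1}\otimes e_{1}\otimes e_{1}$, every vector obtained by repeatedly applying operators $E_{ij}^{*}$ and forming linear combinations is again fully symmetric, so the submodule generated by $w$ never contains $e_{1}\otimes e_{2}\otimes e_{1}+e_{2}\otimes e_{1}\otimes e_{1}$: the transitivity claim at the end of your sketch is precisely what breaks, and no choice of ``isolation'' procedure can repair it. (Likewise $\Lambda^{3}\mathcal{A}$ is a proper nonzero submodule of $(\mathcal{A}\otimes\mathcal{A}\otimes\mathcal{A})^{-}$ as soon as $n\geq 3$.)

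The structural reason is Schur--Weyl duality: under the Leibniz action of $gl(\mathcal{A})$ the cube $\mathcal{A}^{\otimes 3}$ decomposes into $S^{3}\mathcal{A}$, $\Lambda^{3}\mathcal{A}$ and two copies of the mixed-symmetry Schur functor $S^{(2,1)}(\mathcal{A})$, and the $\pm 1$ eigenspaces of the single transposition $(12)$ are $S^{3}\mathcal{A}\oplus S^{(2,1)}(\mathcal{A})$ and $\Lambda^{3}\mathcal{A}\oplus S^{(2,1)}(\mathcal{A})$ respectively (the dimension counts $\binom{n+2}{3}+\tfrac{n(n^{2}-1)}{3}=\tfrac{n^{2}(n+1)}{2}$ and $\binom{n}{3}+\tfrac{n(n^{2}-1)}{3}=\tfrac{n^{2}(n-1)}{2}$ confirm this). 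The binary picture you are modelling the argument on, where $S^{2}V$ and $\Lambda^{2}V$ genuinely are irreducible $gl(V)$-modules, simply does not persist for three factors. You should not feel bad about being unable to supply the ``delicate'' step: the paper's own proof has the identical gap, asserting without computation that repeated application of $gl(\mathcal{A})$ to any nonzero vector generates all of $(\mathcal{A}\otimes\mathcal{A}\otimes\mathcal{A})^{+}$, which the example above refutes. Any repair must replace Theorem 6.2 by the correct list of invariant subspaces, which in turn enlarges the list of possible kernels of $\phi$ used in the proof of Theorem 6.3.
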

\begin{proof}
For any $D \in gl(\mathcal{A})$ and $x \otimes y \otimes z + y \otimes x \otimes z \in (\mathcal{A} \otimes \mathcal{A} \otimes \mathcal{A})^{+}$, we have
\begin{align*}
&D \cdot (x \otimes y \otimes z + y \otimes x \otimes z) = D^{*}(x \otimes y \otimes z + y \otimes x \otimes z)\\
&= D(x) \otimes y \otimes z + D(y) \otimes x \otimes z + x \otimes D(y) \otimes z + y \otimes D(x) \otimes z + x \otimes y \otimes D(z)\\
&+ y \otimes x \otimes D(z)\\
&= (D(x) \otimes y \otimes z + y \otimes D(x) \otimes z) + (D(y) \otimes x \otimes z + x \otimes D(y) \otimes z)\\
&+ (x \otimes y \otimes D(z) + y \otimes x \otimes D(z)) \in (\mathcal{A} \otimes \mathcal{A} \otimes \mathcal{A})^{+}.
\end{align*}
It follows from a direct computation that
\[[D_{1}, D_{2}] \cdot (x \otimes y \otimes z) = D_{1} \cdot (D_{2} \cdot (x \otimes y \otimes z)) - D_{2} \cdot (D_{1} \cdot (x \otimes y \otimes z)),\quad\forall D_{1}, D_{2} \in gl(\mathcal{A}).\]
Hence, $(\mathcal{A} \otimes \mathcal{A} \otimes \mathcal{A})^{+}$ is a $gl(\mathcal{A})$-module.

Suppose that there exists a nonzero submodule $V$ of $(\mathcal{A} \otimes \mathcal{A} \otimes \mathcal{A})^{+}$. Choose a nonzero element $\sum x \otimes y \otimes z + y \otimes x \otimes z \in V$ for some $x, y, z \in \mathcal{A}$. A direct computation shows that all elements in $(\mathcal{A} \otimes \mathcal{A} \otimes \mathcal{A})^{+}$ are obtained by repeated application of elements of $gl(\mathcal{A})$ to $\sum x \otimes y \otimes z + y \otimes x \otimes z$ and formation of linear combinations. Hence $V$ is $(\mathcal{A} \otimes \mathcal{A} \otimes \mathcal{A})^{+}$ itself. Thus, $(\mathcal{A} \otimes \mathcal{A} \otimes \mathcal{A})^{+}$ as a $gl(\mathcal{A})$-module is irreducible. Similarly, $(\mathcal{A} \otimes \mathcal{A} \otimes \mathcal{A})^{-}$ is also an irreducible $gl(\mathcal{A})$-module.
\end{proof}
\begin{thm}\label{thm:6.3}
Let $\mathcal{A}$ be a ternary Jordan algebra with $\rm{[\![\mathcal{A}, \mathcal{A}, \mathcal{A}]\!]} \neq 0$ and $QDer(\mathcal{A}) = gl(\mathcal{A})$. Then $\rm{dim}(\mathcal{A}) \leq 2$. Moreover,
\begin{enumerate}[(1)]
\item if $\rm{dim}(\mathcal{A}) = 1$, then $\mathcal{A}$ is a simple ternary Jordan algebra;
\item if $\rm{dim}(\mathcal{A}) = 2$, then $\rm{[\![\mathcal{A}, \mathcal{A}, \mathcal{A}]\!]} = \mathcal{A}$.
\end{enumerate}
\end{thm}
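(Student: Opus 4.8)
The plan is to reinterpret the hypothesis $QDer(\mathcal{A}) = gl(\mathcal{A})$ on the module $\mathcal{A} \otimes \mathcal{A} \otimes \mathcal{A}$ by way of Theorem \ref{thm:6.1}. That theorem says $D \in QDer(\mathcal{A})$ precisely when $D^{*}$ stabilizes $Ker(\phi)$, so $QDer(\mathcal{A}) = gl(\mathcal{A})$ is equivalent to the assertion that $Ker(\phi)$ is a $gl(\mathcal{A})$-submodule of $\mathcal{A} \otimes \mathcal{A} \otimes \mathcal{A}$ for the $D^{*}$-action of Theorem \ref{thm:6.2}. It is a \emph{proper} submodule, since $\phi$ maps onto $[\![\mathcal{A}, \mathcal{A}, \mathcal{A}]\!] \neq 0$. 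On the other hand the total symmetry of the triple product yields $\phi(x \otimes y \otimes z - y \otimes x \otimes z) = 0$, so $(\mathcal{A} \otimes \mathcal{A} \otimes \mathcal{A})^{-} \subseteq Ker(\phi)$, and hence $Ker(\phi) = (\mathcal{A} \otimes \mathcal{A} \otimes \mathcal{A})^{-} \oplus W$ with $W := Ker(\phi) \cap (\mathcal{A} \otimes \mathcal{A} \otimes \mathcal{A})^{+}$ a proper $gl(\mathcal{A})$-submodule of $(\mathcal{A} \otimes \mathcal{A} \otimes \mathcal{A})^{+}$.

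Since $\phi$ annihilates the minus part, its restriction induces an isomorphism $(\mathcal{A} \otimes \mathcal{A} \otimes \mathcal{A})^{+}/W \xrightarrow{\ \sim\ } [\![\mathcal{A}, \mathcal{A}, \mathcal{A}]\!]$. Writing $n = \dim \mathcal{A}$ and using $\dim(\mathcal{A} \otimes \mathcal{A} \otimes \mathcal{A})^{+} = n^{2}(n+1)/2$, we obtain
\[
\dim [\![\mathcal{A}, \mathcal{A}, \mathcal{A}]\!] \;=\; \frac{n^{2}(n+1)}{2} - \dim W \;=\; \mathrm{codim}_{(\mathcal{A} \otimes \mathcal{A} \otimes \mathcal{A})^{+}} W \;\in\; \{1, 2, \dots, n\},
\]
the membership because $[\![\mathcal{A}, \mathcal{A}, \mathcal{A}]\!]$ is a nonzero subspace of $\mathcal{A}$. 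I would then invoke the $gl(\mathcal{A})$-module structure of $(\mathcal{A} \otimes \mathcal{A} \otimes \mathcal{A})^{+} \cong S^{2}\mathcal{A} \otimes \mathcal{A}$: by Theorem \ref{thm:6.2}, or more precisely by the fact that its composition factors are $S^{3}\mathcal{A}$, of dimension $\binom{n+2}{3}$, and the mixed-symmetry piece, of dimension $\tfrac{(n-1)n(n+1)}{3}$, every proper submodule $W$ has codimension at least $\min\!\bigl(\binom{n+2}{3},\tfrac{(n-1)n(n+1)}{3}\bigr)$ when $n \geq 2$ (and $W = 0$ for $n = 1$). Since $\min\!\bigl(\binom{n+2}{3},\tfrac{(n-1)n(n+1)}{3}\bigr) > n$ for every $n \geq 3$, the bound $\dim [\![\mathcal{A}, \mathcal{A}, \mathcal{A}]\!] \leq n$ is incompatible with $n \geq 3$; hence $\dim \mathcal{A} \leq 2$.

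For the boundary cases: if $n = 1$ then $W = 0$ and $\dim [\![\mathcal{A}, \mathcal{A}, \mathcal{A}]\!] = 1$, so $[\![\mathcal{A}, \mathcal{A}, \mathcal{A}]\!] = \mathcal{A}$; the only subspaces of the one-dimensional $\mathcal{A}$ are $0$ and $\mathcal{A}$, both ideals, so $\mathcal{A}$ has no proper nonzero ideal, and since $[\![\mathcal{A}, \mathcal{A}, \mathcal{A}]\!] \neq 0$ it is simple, which is (1). If $n = 2$ then the displayed identity gives $\dim [\![\mathcal{A}, \mathcal{A}, \mathcal{A}]\!] = \mathrm{codim}\, W \geq \min\!\bigl(\binom{4}{3},\tfrac{1 \cdot 2 \cdot 3}{3}\bigr) = 2$, while at the same time $\dim [\![\mathcal{A}, \mathcal{A}, \mathcal{A}]\!] \leq 2$, so $[\![\mathcal{A}, \mathcal{A}, \mathcal{A}]\!] = \mathcal{A}$, which is (2). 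The step I expect to be the main obstacle is the submodule estimate for $S^{2}\mathcal{A} \otimes \mathcal{A}$: over a field of characteristic $0$ or $>3$ it is the irreducible decomposition behind Theorem \ref{thm:6.2}, but in characteristic $3$ (which this section allows) the module need not be semisimple, and one must instead show directly that a proper submodule cannot be so large — e.g. that $S^{2}\mathcal{A} \otimes \mathcal{A}$ admits no nonzero quotient of small dimension — with the hypothesis $[\![\mathcal{A}, \mathcal{A}, \mathcal{A}]\!] \neq 0$ (which forbids $W = (\mathcal{A} \otimes \mathcal{A} \otimes \mathcal{A})^{+}$) and the ternary Jordan identity supplying the leverage that pure representation theory does not.
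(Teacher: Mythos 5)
Your proposal reaches the stated conclusions, but it takes a genuinely different — and in fact more careful — route than the paper at the decisive step. Both arguments begin with Theorem \ref{thm:6.1}, so that $Ker(\phi)$ becomes a proper invariant subspace of $\mathcal{A}\otimes\mathcal{A}\otimes\mathcal{A}$ under the $D^{*}$-action, and both exploit the dimensions of $(\mathcal{A}\otimes\mathcal{A}\otimes\mathcal{A})^{\pm}$. The paper then quotes Theorem \ref{thm:6.2} to assert that the only proper invariant subspaces are $(\mathcal{A}\otimes\mathcal{A}\otimes\mathcal{A})^{+}$ and $(\mathcal{A}\otimes\mathcal{A}\otimes\mathcal{A})^{-}$, concludes that $Ker(\phi)$ is $0$, $(\mathcal{A}\otimes\mathcal{A}\otimes\mathcal{A})^{+}$ or $(\mathcal{A}\otimes\mathcal{A}\otimes\mathcal{A})^{-}$, and finishes with $\dim\mathcal{A}\geq n^{3}-\dim Ker(\phi)$. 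You instead observe that the symmetry of the product forces $(\mathcal{A}\otimes\mathcal{A}\otimes\mathcal{A})^{-}\subseteq Ker(\phi)$ and push the whole analysis into the proper submodule $W=Ker(\phi)\cap(\mathcal{A}\otimes\mathcal{A}\otimes\mathcal{A})^{+}$ of $S^{2}\mathcal{A}\otimes\mathcal{A}$, bounding its codimension by the dimensions of the constituents $S^{3}\mathcal{A}$ and $S_{(2,1)}\mathcal{A}$. This is not cosmetic: your inclusion shows the paper's trichotomy cannot hold as stated for $n\geq 2$ (the kernel can equal $(\mathcal{A}\otimes\mathcal{A}\otimes\mathcal{A})^{+}$ only if the minus part vanishes), and your decomposition $S^{2}\mathcal{A}\otimes\mathcal{A}\cong S^{3}\mathcal{A}\oplus S_{(2,1)}\mathcal{A}$ exhibits the totally symmetric tensors as a proper nonzero submodule of $(\mathcal{A}\otimes\mathcal{A}\otimes\mathcal{A})^{+}$, so the irreducibility claim of Theorem \ref{thm:6.2} on which the paper leans is itself false for $n\geq2$; your argument repairs the proof rather than reproducing it. Two caveats remain. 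First, as you yourself flag, your codimension estimate for proper submodules presumes the submodule lattice of $S^{2}\mathcal{A}\otimes\mathcal{A}$ is the semisimple one, which holds in characteristic $0$ or $>3$ but needs a separate check in characteristic $3$, not excluded by Section \ref{se:6}; the paper offers nothing there either, so this gap is inherited, not created. Second, in your $n=2$ case the forced equality $\dim W=\dim S^{3}\mathcal{A}$ makes $W$ the space of totally symmetric tensors, whence $[\![x,x,x]\!]=0$ for all $x$ and, by polarization, $[\![\cdot,\cdot,\cdot]\!]=0$, contradicting the hypothesis — so case (2) is in fact vacuous over such fields; your deduction of (2) is still valid, but this is worth recording.
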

\begin{proof}
We consider the action of $gl(\mathcal{A})$ on $\mathcal{A} \otimes \mathcal{A} \otimes \mathcal{A}$ via $D \cdot (x \otimes y \otimes z) = D^{*}(x \otimes y \otimes z)$ for all $x, y, z \in \mathcal{A}$ with $D^{*}$ as in (\ref{eq:6.1}). By Theorem \ref{thm:6.1}, $QDer(\mathcal{A}) = gl(\mathcal{A})$ implies that $gl(\mathcal{A}) \cdot Ker(\phi) \subseteq Ker(\phi)$. Theorem \ref{thm:6.2} tells us that the only proper subspaces of $\mathcal{A} \otimes \mathcal{A} \otimes \mathcal{A}$, invariant under this action of $gl(\mathcal{A})$, are $(\mathcal{A} \otimes \mathcal{A} \otimes \mathcal{A})^{+}$ and $(\mathcal{A} \otimes \mathcal{A} \otimes \mathcal{A})^{-}$. Thus we have $\phi : \mathcal{A} \otimes \mathcal{A} \otimes \mathcal{A} \rightarrow \mathcal{A}$ with kernel $\{0\}$, $(\mathcal{A} \otimes \mathcal{A} \otimes \mathcal{A})^{+}$ and $(\mathcal{A} \otimes \mathcal{A} \otimes \mathcal{A})^{-}$. Using
\[\rm{dim}(\mathcal{A}) \geq \rm{dim}(\mathcal{A} \otimes \mathcal{A} \otimes \mathcal{A}) - \rm{dim}(Ker(\phi)),\]
we have $\rm{dim}(\mathcal{A}) = 1$ if $Ker(\phi) = \{0\}$ or $(\mathcal{A} \otimes \mathcal{A} \otimes \mathcal{A})^{-}$ and $\rm{dim}(\mathcal{A}) \leq 2$ if $Ker(\phi) = (\mathcal{A} \otimes \mathcal{A} \otimes \mathcal{A})^{+}$.
\begin{enumerate}[(1)]
\item If $\rm{dim}(\mathcal{A}) = 1$, then $\mathcal{A}$ is simple since $\rm{[\![\mathcal{A}, \mathcal{A}, \mathcal{A}]\!]} \neq 0$.
\item If $\rm{dim}(\mathcal{A}) = 2$, i.e., $Ker(\phi) = (\mathcal{A} \otimes \mathcal{A} \otimes \mathcal{A})^{+}$, hence $\rm{dim}(\it{Ker}(\phi)) = \rm{6}$ and $\rm{dim}(\rm{[\![\mathcal{A}, \mathcal{A}, \mathcal{A}]\!]}) = 2$, so $\phi$ must be surjective and we have $\rm{[\![\mathcal{A}, \mathcal{A}, \mathcal{A}]\!]} = \mathcal{A}$.
\end{enumerate}
\end{proof}
\section{Centroids of ternary Jordan algebras}\label{se:7}
\begin{prop}\label{prop:7.1}
If $\mathcal{A}$ has no nonzero ideals $\mathcal{I}$, $\mathcal{J}$ with $\rm{[\![\mathcal{A},\mathcal{I},\mathcal{J}]\!]} = 0$, then $\Gamma(\mathcal{A})$ is an integral domain.
\end{prop}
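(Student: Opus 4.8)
The plan is to exploit the hypothesis twice: once to force $Z(\mathcal{A})=\{0\}$ (which makes $\Gamma(\mathcal{A})$ a commutative unital ring), and once more, applied to the images of two composable centroid elements, to rule out zero divisors. First I would record two preliminary observations. (i) The annihilator $Z(\mathcal{A})$ is an ideal with $[\![\mathcal{A},Z(\mathcal{A}),Z(\mathcal{A})]\!]=0$ by its very definition, so taking $\mathcal{I}=\mathcal{J}=Z(\mathcal{A})$ in the hypothesis gives $Z(\mathcal{A})=\{0\}$. Hence, by Theorem~\ref{thm:4.1}, $\Gamma(\mathcal{A})$ is abelian; together with the routine checks that $\Gamma(\mathcal{A})$ is closed under composition (if $[\![f(x),y,z]\!]=[\![x,f(y),z]\!]=[\![x,y,f(z)]\!]=f([\![x,y,z]\!])$ and likewise for $g$, then the same identities hold for $fg$) and that $\mathrm{id}_{\mathcal{A}}\in\Gamma(\mathcal{A})$, this shows $\Gamma(\mathcal{A})$ is a commutative associative ring with identity (and nonzero, assuming $\mathcal{A}\neq\{0\}$). (ii) For every $f\in\Gamma(\mathcal{A})$ the image $\mathrm{im}\,f$ is an ideal of $\mathcal{A}$, since it is a subspace and $[\![f(x),a,b]\!]=f([\![x,a,b]\!])\in\mathrm{im}\,f$ for all $a,b\in\mathcal{A}$.

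Next I would prove the absence of zero divisors. Suppose $f,g\in\Gamma(\mathcal{A})$ with $fg=0$. For arbitrary $a,x,y\in\mathcal{A}$, applying the centroid identity for $f$ in the middle slot and then for $g$ in the last slot gives
\[
[\![a,f(x),g(y)]\!] = f\big([\![a,x,g(y)]\!]\big) = f\big(g([\![a,x,y]\!])\big) = (fg)([\![a,x,y]\!]) = 0 .
\]
Since every element of $\mathrm{im}\,f$ is of the form $f(x)$ and every element of $\mathrm{im}\,g$ is of the form $g(y)$, this says $[\![\mathcal{A},\mathrm{im}\,f,\mathrm{im}\,g]\!]=0$. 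By (ii), $\mathrm{im}\,f$ and $\mathrm{im}\,g$ are ideals of $\mathcal{A}$, so the hypothesis forces $\mathrm{im}\,f=\{0\}$ or $\mathrm{im}\,g=\{0\}$, i.e. $f=0$ or $g=0$. Combined with (i), this shows $\Gamma(\mathcal{A})$ is an integral domain.

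The proof is short and I do not expect a genuine obstacle; the only step requiring some care is (i), namely making precise that $\Gamma(\mathcal{A})$ is really a commutative ring with unit rather than merely a Lie subalgebra of $gl(\mathcal{A})$ — this is exactly where the reduction to $Z(\mathcal{A})=\{0\}$ and Theorem~\ref{thm:4.1} enter, and one should also explicitly verify closure under composition. The core idea — that $fg=0$ forces the product ideal $[\![\mathcal{A},\mathrm{im}\,f,\mathrm{im}\,g]\!]$ to vanish — is then immediate from the defining identities of the centroid.
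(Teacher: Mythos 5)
Your proof is correct and follows essentially the same route as the paper: the key step in both is that $fg=0$ forces $[\![\mathcal{A},f(x),g(y)]\!]=fg([\![\mathcal{A},x,y]\!])=0$, so the hypothesis applied to the resulting ideals kills $f$ or $g$. You are in fact slightly more careful than the paper in two spots — you use the full images $\mathrm{im}\,f$, $\mathrm{im}\,g$ as the ideals (the paper vaguely "spans" single elements into ideals), and you explicitly derive $Z(\mathcal{A})=\{0\}$ to get commutativity and unitality of $\Gamma(\mathcal{A})$, which the paper leaves implicit.
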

\begin{proof}
First, $id \in \Gamma(\mathcal{A})$. If there exist $\psi \neq 0$, $\varphi \neq 0$ such that $\psi\varphi = 0$, then there exist $x, y \in \mathcal{A}$ such that $\psi(x) \neq 0$, $\varphi(y) \neq 0$. Then $\rm{[\![\mathcal{A},\it{\psi(x)},\it{\varphi(y)}]\!]} = \psi\varphi(\rm{[\![\mathcal{A},\it{x},\it{y}]\!]}) = 0$. Therefore $\psi(x)$ and $\varphi(y)$ can span to be two nonzero ideals $\mathcal{I}$, $\mathcal{J}$ of $\mathcal{A}$ such that $\rm{[\![\mathcal{A},\mathcal{I},\mathcal{J}]\!]} = 0$ respectively, a contradiction. Hence, $\Gamma(\mathcal{A})$ has no zero divisor, it is an integral domain.
\end{proof}
\begin{thm}\label{thm:7.2}
If $\mathcal{A}$ is a simple ternary Jordan algebra over an algebraically closed field $\rm{F}$, then $\Gamma(\mathcal{A}) = \rm{F}\it{id}$.
\end{thm}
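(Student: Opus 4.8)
The plan is to run a Schur-type argument adapted to the ternary setting. First I would record the structural consequences of simplicity: since $\mathcal{A}$ is nontrivial, $[\![\mathcal{A},\mathcal{A},\mathcal{A}]\!]$ is a nonzero ideal, hence $[\![\mathcal{A},\mathcal{A},\mathcal{A}]\!]=\mathcal{A}$; likewise $Z(\mathcal{A})$ is an ideal which cannot be all of $\mathcal{A}$ (otherwise $[\![\mathcal{A},\mathcal{A},\mathcal{A}]\!]=0$), so $Z(\mathcal{A})=\{0\}$. In particular the hypothesis of Proposition~\ref{prop:7.1} holds, because two nonzero ideals of a simple algebra both equal $\mathcal{A}$, whence $[\![\mathcal{A},\mathcal{A},\mathcal{A}]\!]\neq 0$ rules out $[\![\mathcal{A},\mathcal{I},\mathcal{J}]\!]=0$; so $\Gamma(\mathcal{A})$ is already an integral domain containing $id$, and it remains to upgrade this to a division algebra and then identify it with $\mathrm{F}\,id$.

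Next, for a fixed nonzero $f\in\Gamma(\mathcal{A})$ I would check that $\ker f$ and $\operatorname{Im} f$ are ideals of $\mathcal{A}$: if $f(x)=0$ then $f([\![x,y,z]\!])=[\![f(x),y,z]\!]=0$, giving $[\![\ker f,\mathcal{A},\mathcal{A}]\!]\subseteq\ker f$; and $[\![f(x),y,z]\!]=f([\![x,y,z]\!])\in\operatorname{Im} f$, giving $[\![\operatorname{Im} f,\mathcal{A},\mathcal{A}]\!]\subseteq\operatorname{Im} f$. Simplicity then forces $\ker f=\{0\}$ and $\operatorname{Im} f=\mathcal{A}$ (the alternatives are excluded by $f\neq 0$), so $f$ is bijective. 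A short substitution $x\mapsto f^{-1}(x)$ (and the analogous substitutions in the second and third arguments) in the defining identities of $\Gamma(\mathcal{A})$ shows that $f^{-1}$ again lies in $\Gamma(\mathcal{A})$. Hence every nonzero element of $\Gamma(\mathcal{A})$ is invertible, i.e. $\Gamma(\mathcal{A})$ is a division subalgebra of $\operatorname{End}(\mathcal{A})$; together with Theorem~\ref{thm:4.1} (applicable since $Z(\mathcal{A})=\{0\}$) it is in fact a commutative division algebra over $\mathrm{F}$, that is, a field extension of $\mathrm{F}$.

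Finally I would invoke algebraic closedness. Working inside the finite-dimensional algebra $\operatorname{End}(\mathcal{A})$, the operator $f$ has an eigenvalue $\lambda\in\mathrm{F}$; then $f-\lambda\,id\in\Gamma(\mathcal{A})$, because $\Gamma(\mathcal{A})$ is a subspace containing $id$, and it has nonzero kernel, so by the previous paragraph it cannot be invertible and therefore $f-\lambda\,id=0$. Thus $f=\lambda\,id$, which proves $\Gamma(\mathcal{A})=\mathrm{F}\,id$.

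The routine parts are the two ideal verifications and the substitution giving $f^{-1}\in\Gamma(\mathcal{A})$. The step that actually carries the theorem is the chain ``nonzero centroid element $\Rightarrow$ bijective $\Rightarrow$ scalar'': simplicity is what trivializes both kernel and image, and algebraic closedness is what produces the eigenvalue. The one assumption worth flagging is that $\mathcal{A}$ is finite-dimensional, which is exactly what guarantees the eigenvalue $\lambda$ exists; without it one would have to replace the eigenvalue step by a Dixmier/Amitsur-type argument showing that a suitable division algebra over an algebraically closed field of large cardinality is trivial.
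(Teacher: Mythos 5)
Your proposal is correct and its decisive step is the same as the paper's: algebraic closedness yields an eigenvalue $\lambda$, and the eigenspace of $f$ for $\lambda$ (equivalently, $\ker(f-\lambda\,id)$, which is the kernel of a centroid element and hence an ideal) must equal $\mathcal{A}$ by simplicity, forcing $f=\lambda\,id$. The intermediate material on integral domains, invertibility and division algebras is harmless but not needed, and your remark that finite-dimensionality is implicitly used to produce the eigenvalue applies equally to the paper's own argument.
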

\begin{proof}
Let $\psi \in \Gamma(\mathcal{A}) \subseteq \rm{End_{F}}(\mathcal{A})$. Since $\rm{F}$ is algebraically closed, $\psi$ has an eigenvalue $\lambda$. We denote the corresponding eigenspace by $E_{\lambda}(\psi)$. So $E_{\lambda}(\psi) \neq \{0\}$. For any $v \in E_{\lambda}(\psi)$, $x, y \in \mathcal{A}$, we have $\psi(\rm{[\![\it{v},\it{x},\it{y}]\!]}) = \rm{[\![\psi(\it{v}),\it{x},\it{y}]\!]} = \lambda\rm{[\![\it{v},\it{x},\it{y}]\!]}$, so $\rm{[\![\it{v},\it{x},\it{y}]\!]} \in E_{\lambda}(\psi)$. It follows that $E_{\lambda}(\psi)$ is an ideal of $\mathcal{A}$. But $\mathcal{A}$ is simple, so $E_{\lambda}(\psi) = \mathcal{A}$, i.e., $\psi = \lambda id_{\mathcal{A}}$. Therefore, $\Gamma(\mathcal{A}) = \rm{F}\it{id}$.
\end{proof}
\begin{prop}\label{prop:7.3}
Let $\mathcal{A}$ be a ternary Jordan algebra over a field $\rm{F}$. Then the following results hold:
\begin{enumerate}[(1)]
\item $\mathcal{A}$ is indecomposable if and only if $\Gamma(\mathcal{A})$ does not contain idempotents except $0$ and id.
\item If $\mathcal{A}$ is perfect, then every $\psi \in \Gamma(\mathcal{A})$ is symmetric with respect to any invariant form on $\mathcal{A}$.
\end{enumerate}
\end{prop}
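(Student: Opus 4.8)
The plan is to handle the two assertions independently, each by elementary manipulations with the centroid and invariance conditions.

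For (1) I would argue both implications by contraposition. Suppose first that $\mathcal{A}=\mathcal{B}\oplus\mathcal{C}$ with $\mathcal{B},\mathcal{C}$ nonzero ideals, and let $e\in\mathrm{End}(\mathcal{A})$ be the projection onto $\mathcal{B}$ with kernel $\mathcal{C}$; then $e^{2}=e$, and $e\neq 0$, $e\neq\mathrm{id}$ since both ideals are nonzero. The key observation is that all "mixed" triple products vanish: because $\mathcal{B}$ and $\mathcal{C}$ are ideals and $\mathcal{B}\cap\mathcal{C}=0$, the symmetry of $[\![\cdot,\cdot,\cdot]\!]$ gives $[\![\mathcal{B},\mathcal{C},\mathcal{A}]\!]\subseteq\mathcal{B}\cap\mathcal{C}=0$, and likewise for every configuration in which the three entries are not all in $\mathcal{B}$ or all in $\mathcal{C}$. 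Writing $x=x_{1}+x_{2}$, $y=y_{1}+y_{2}$, $z=z_{1}+z_{2}$ with the first index in $\mathcal{B}$ and the second in $\mathcal{C}$, one obtains $[\![x,y,z]\!]=[\![x_{1},y_{1},z_{1}]\!]+[\![x_{2},y_{2},z_{2}]\!]$ and, in the same way, $[\![e(x),y,z]\!]=[\![x,e(y),z]\!]=[\![x,y,e(z)]\!]=[\![x_{1},y_{1},z_{1}]\!]=e([\![x,y,z]\!])$, so $e\in\Gamma(\mathcal{A})$ is a nontrivial idempotent. Conversely, if $e\in\Gamma(\mathcal{A})$ is idempotent with $e\neq 0,\mathrm{id}$, then $\mathrm{id}-e\in\Gamma(\mathcal{A})$ is also idempotent and $\mathcal{A}=e(\mathcal{A})\oplus(\mathrm{id}-e)(\mathcal{A})$ as a vector space; each summand is an ideal, since for $v$ with $e(v)=v$ and any $x,y\in\mathcal{A}$ we have $[\![v,x,y]\!]=[\![e(v),x,y]\!]=e([\![v,x,y]\!])\in e(\mathcal{A})$, and symmetrically for $\mathrm{id}-e$. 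As $e\neq 0,\mathrm{id}$, both ideals are nonzero, so $\mathcal{A}$ is decomposable.

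For (2), fix an invariant bilinear form $f$ on $\mathcal{A}$ and $\psi\in\Gamma(\mathcal{A})$; I must show $f(\psi(a),b)=f(a,\psi(b))$ for all $a,b\in\mathcal{A}$. Since $\mathcal{A}$ is perfect, $[\![\mathcal{A},\mathcal{A},\mathcal{A}]\!]=\mathcal{A}$, so by bilinearity it suffices to treat $b=[\![u,v,w]\!]$. I would then reduce both sides to a common expression, using only the symmetry of $[\![\cdot,\cdot,\cdot]\!]$, the invariance identity $f([\![p,q,r]\!],s)=f(p,[\![s,q,r]\!])$, and the centroid identities $[\![\psi(p),q,r]\!]=[\![p,q,\psi(r)]\!]=\psi([\![p,q,r]\!])$:
\[
f(\psi(a),[\![u,v,w]\!]) = f(\psi(a),[\![v,w,u]\!]) = f([\![\psi(a),w,u]\!],v) = f(\psi([\![a,w,u]\!]),v),
\]
while on the other hand
\[
f(a,\psi([\![u,v,w]\!])) = f(a,[\![\psi(u),v,w]\!]) = f(a,[\![v,w,\psi(u)]\!]) = f([\![a,w,\psi(u)]\!],v) = f(\psi([\![a,w,u]\!]),v).
\]
Comparing the two lines yields $f(\psi(a),b)=f(a,\psi(b))$, that is, $\psi$ is symmetric with respect to $f$.

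The bulk of (1) is the standard linear-algebra bookkeeping of idempotents and of keeping track of which slot each operator sits in; the only substantive point is recognising that a decomposition into ideals kills all mixed products, which is exactly what makes the projection land in $\Gamma(\mathcal{A})$. In (2) the delicate step is choosing the order of the symmetry/invariance/centroid moves so that $\psi$ ends up in the same interior position — here inside $[\![a,w,u]\!]$ — on both sides; the whole computation hinges on perfectness, which is what allows $b$ to be replaced by a triple product in the first place.
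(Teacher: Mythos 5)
Your proof is correct and follows essentially the same route as the paper: a nontrivial idempotent in $\Gamma(\mathcal{A})$ yields the ideal decomposition $\mathcal{A}=\mathrm{Ker}\oplus\mathrm{Im}$, a decomposition into ideals yields a centroid idempotent because all mixed products land in $\mathcal{B}\cap\mathcal{C}=0$, and part (2) is the same chain of symmetry/invariance/centroid moves combined with perfectness. One point in your favour: for the converse of (1) the paper takes $\psi(x)=x_{1}-x_{2}$ and asserts $\psi^{2}=\psi$, which is false (that map is an involution, $\psi^{2}=\mathrm{id}$); your choice of the genuine projection $e(x)=x_{1}$ is the correct repair.
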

\begin{proof}
(1). If there exists $\psi \in \Gamma(\mathcal{A})$ which is an idempotent and satisfies $\psi \neq 0, id$, then $\psi^{2}(x) = \psi(x)$ for any $x \in \mathcal{A}$. We can see that $Ker(\psi)$ and $Im(\psi)$ are ideals of $\mathcal{A}$. Moreover, $Ker(\psi) \cap Im(\psi) = 0$. Indeed, if $x \in Ker(\psi) \cap Im(\psi)$, then there exists $y \in \mathcal{A}$ such that $x = \psi(y)$ and $x = \psi(y) = \psi^{2}(y) = \psi(x) = 0$. For any $x \in \mathcal{A}$, we have a decomposition $x = x - \psi(x) + \psi(x)$ where $x - \psi(x) \in Ker(\psi)$ and $\psi(x) \in Im(\psi)$. So we have $\mathcal{A} = Ker(\psi) \oplus Im(\mathcal{A})$, a contradiction.

On the other hand, suppose that $\mathcal{A}$ has a decomposition $\mathcal{A} = \mathcal{A}_{1} \oplus \mathcal{A}_{2}$. Then for any $x \in \mathcal{A}$, we have $x = x_{1} + x_{2}$ where $x_{1} \in \mathcal{A}_{1}$ and $x_{2} \in \mathcal{A}_{2}$. Define $\psi : \mathcal{A} \rightarrow \mathcal{A}$ to be a linear map by $\psi(x) = x_{1} - x_{2}$ where $x = x_{1} + x_{2}$. It's obvious that $\psi^{2} = \psi$. One can verify that $\psi$ satisfies
\[\psi(\rm{[\![\it{x},\it{y},\it{z}]\!]}) = \rm{[\![\psi(\it{x}),\it{y},\it{z}]\!]} = \rm{[\![\it{x},\psi(\it{y}),\it{z}]\!]} = \rm{[\![\it{x},\it{y},\psi(\it{z})]\!]},\quad\forall \it{x, y, z} \in \mathcal{A},\]
which implies that $\psi \in \Gamma(\mathcal{A})$. Contradiction.

(2). Let $f$ be an invariant $\rm{F}$-bilinear form on $\mathcal{A}$. Then $f(\rm{[\![\it{a},\it{b},\it{c}]\!]}, \it{d}) = f(\it{a}, \rm{[\![\it{d},\it{b},\it{c}]\!]})$ for any $\it{a, b, c, d} \in \mathcal{A}$. Let $\psi \in \Gamma(\mathcal{A})$, we have
\begin{align*}
&f(\psi(\rm{[\![\it{a},\it{b},\it{c}]\!]}), \it{d}) = f(\rm{[\![\it{a},\psi(\it{b}),\it{c}]\!]}, \it{d}) = f(\it{a}, \rm{[\![\it{d},\psi(\it{b}),\it{c}]\!]})\\
&= f(\it{a}, \psi(\rm{[\![\it{d},\it{b},\it{c}]\!]})) = \it{f}(\it{a}, \rm{[\![\psi(\it{d}),\it{b},\it{c}]\!]}) = \it{f}(\rm{[\![\it{a},\it{b},\it{c}]\!]}, \psi(\it{d})).
\end{align*}
The lemma is proved since $\mathcal{A}$ is perfect.
\end{proof}
\begin{prop}\label{prop:7.4}
Let $\mathcal{A}$ be a ternary Jordan algebra over $\rm{F}$ and $\mathcal{I}$ be a subset of $\mathcal{A}$. Then $C_{\mathcal{A}}(\mathcal{I}): = \{x \in \mathcal{A} \mid \rm{[\![\it{x},\it{y},\it{z}]\!]} = 0,\;\forall \it{y} \in \mathcal{I}, \it{z} \in \mathcal{A}\}$ is invariant under $\Gamma(\mathcal{A})$, so is any perfect ideal of $\mathcal{A}$.
\end{prop}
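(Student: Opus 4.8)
The proposition contains two independent assertions — invariance of the centralizer $C_{\mathcal{A}}(\mathcal{I})$ and invariance of any perfect ideal — and I would prove them separately, each being a short consequence of the defining identity $\rm{[\![\psi(\it{x}),\it{y},\it{z}]\!]} = \rm{[\![\it{x},\psi(\it{y}),\it{z}]\!]} = \rm{[\![\it{x},\it{y},\psi(\it{z})]\!]} = \psi(\rm{[\![\it{x},\it{y},\it{z}]\!]})$ valid for every $\psi \in \Gamma(\mathcal{A})$.

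For the first part, fix $\psi \in \Gamma(\mathcal{A})$ and $x \in C_{\mathcal{A}}(\mathcal{I})$. For all $y \in \mathcal{I}$ and $z \in \mathcal{A}$ one computes $\rm{[\![\psi(\it{x}),\it{y},\it{z}]\!]} = \psi(\rm{[\![\it{x},\it{y},\it{z}]\!]}) = \psi(0) = 0$, where $\rm{[\![\it{x},\it{y},\it{z}]\!]} = 0$ because $x \in C_{\mathcal{A}}(\mathcal{I})$. Since $y \in \mathcal{I}$ and $z \in \mathcal{A}$ were arbitrary, $\psi(x) \in C_{\mathcal{A}}(\mathcal{I})$, hence $\psi(C_{\mathcal{A}}(\mathcal{I})) \subseteq C_{\mathcal{A}}(\mathcal{I})$.

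For the second part, let $\mathcal{I}$ be a perfect ideal, i.e. $\rm{[\![\mathcal{I},\mathcal{I},\mathcal{I}]\!]} = \mathcal{I}$. Given $\psi \in \Gamma(\mathcal{A})$ and $w \in \mathcal{I}$, perfectness lets me write $w = \sum_{i} \rm{[\![\it{a_{i}},\it{b_{i}},\it{c_{i}}]\!]}$ with all $a_{i}, b_{i}, c_{i} \in \mathcal{I}$. Applying $\psi$ and using the centroid identity to slide $\psi$ into the middle slot gives $\psi(w) = \sum_{i} \psi(\rm{[\![\it{a_{i}},\it{b_{i}},\it{c_{i}}]\!]}) = \sum_{i} \rm{[\![\it{a_{i}},\psi(\it{b_{i}}),\it{c_{i}}]\!]}$. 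Since $a_{i} \in \mathcal{I}$ and $\psi(b_{i}), c_{i} \in \mathcal{A}$, each summand lies in $\rm{[\![\mathcal{I},\mathcal{A},\mathcal{A}]\!]} \subseteq \mathcal{I}$ by the defining property of an ideal; hence $\psi(w) \in \mathcal{I}$ and $\psi(\mathcal{I}) \subseteq \mathcal{I}$.

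I do not anticipate a genuine obstacle: the whole statement is routine once the centroid relations are invoked, and no auxiliary result from earlier sections is needed. The only real content is to identify where the hypotheses enter — the centralizer argument uses nothing beyond $\Gamma(\mathcal{A})$ commuting with the triple product, whereas the ideal statement truly requires \emph{perfectness}, since a general ideal need not be $\Gamma(\mathcal{A})$-invariant (for an algebra with trivial product $\Gamma(\mathcal{A}) = \rm{End}(\mathcal{A})$ while every subspace is an ideal). Perfectness is precisely what exhibits each $w \in \mathcal{I}$ as a sum of triple products of elements of $\mathcal{I}$, after which pushing $\psi$ onto an inner factor and applying $\rm{[\![\mathcal{I},\mathcal{A},\mathcal{A}]\!]} \subseteq \mathcal{I}$ closes the argument; if one prefers, the $\mathcal{S}_{3}$-symmetry of $\rm{[\![\cdot,\cdot,\cdot]\!]}$ may be used instead of choosing a particular slot.
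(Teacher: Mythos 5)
Your proof is correct and follows essentially the same route as the paper's: apply the centroid identity $\rm{[\![\psi(\it{x}),\it{y},\it{z}]\!]} = \psi(\rm{[\![\it{x},\it{y},\it{z}]\!]})$ to see that $C_{\mathcal{A}}(\mathcal{I})$ is preserved, and for a perfect ideal write each element as a (sum of) triple product(s) of ideal elements, slide $\psi$ into an inner slot, and invoke $\rm{[\![\mathcal{I},\mathcal{A},\mathcal{A}]\!]} \subseteq \mathcal{I}$. Your version is in fact slightly more careful than the paper's, which writes a general element of $\rm{[\![\mathcal{J},\mathcal{J},\mathcal{J}]\!]}$ as a single product rather than a sum.
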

\begin{proof}
For any $\psi \in \Gamma(\mathcal{A})$ and $x \in C_{\mathcal{A}}(\mathcal{I})$, we have $\rm{[\![\psi(\it{x}),\it{y},\it{z}]\!]} = \psi(\rm{[\![\it{x},\it{y},\it{z}]\!]}) = 0$ for any $y \in \mathcal{I}, z \in \mathcal{A}$, which implies that $\psi(x) \in C_{\mathcal{A}}(\mathcal{I})$. So $C_{\mathcal{A}}(\mathcal{I})$ is invariant under $\Gamma(\mathcal{A})$.

Suppose that $\mathcal{J}$ is a perfect ideal of $\mathcal{A}$, then $\mathcal{J} = \rm{[\![\mathcal{J},\mathcal{J},\mathcal{J}]\!]}$. For any $y \in \mathcal{J}$, there exist $a, b, c \in \mathcal{J}$ such that $\it{y} = \rm{[\![\it{a},\it{b},\it{c}]\!]}$, then we have $\psi(\it{y}) = \psi(\rm{[\![\it{a},\it{b},\it{c}]\!]}) = \rm{[\![\it{a},\psi(\it{b}),\it{c}]\!]} \in \rm{[\![\mathcal{J},\mathcal{A},\mathcal{A}]\!]} \subseteq \mathcal{J}$. Hence, $\mathcal{J}$ is invariant under $\Gamma(\mathcal{A})$.
\end{proof}
\begin{lem}\label{lem:7.5}\cite{ZCM}
Let $V$ be a linear space and $\varphi : V \rightarrow V$ a linear map. $f(x)$ denotes the minimal polynomial of $\varphi$ . If $x^{2}$ does not divide $f(x)$, then $V = Ker(\varphi) \dotplus Im(\varphi)$.
\end{lem}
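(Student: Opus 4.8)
The plan is to view $V$ as a module over the polynomial ring $\rm{F}[x]$ with $x$ acting as $\varphi$, and to read off the decomposition from a B\'ezout identity. First I would factor the minimal polynomial as $f(x) = x^{k} g(x)$ with $g(0) \neq 0$; the hypothesis $x^{2} \nmid f(x)$ is precisely the statement $k \in \{0, 1\}$, so I would treat these two values of $k$ in turn. Throughout, the only tools needed are that the operators $\varphi$, $g(\varphi)$, $h(\varphi)$, $a(\varphi)$, $b(\varphi)$ all commute (they lie in the commutative algebra $\rm{F}[\varphi]$) and the relation $\varphi\, g(\varphi) = f(\varphi) = 0$.

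If $k = 0$, then $g(\varphi) = 0$ and $g$ has nonzero constant term, so writing $g(x) = g(0) + x\,h(x)$ gives $\varphi\big({-}g(0)^{-1}h(\varphi)\big) = id = \big({-}g(0)^{-1}h(\varphi)\big)\varphi$; hence $\varphi$ is invertible, $Ker(\varphi) = 0$ and $Im(\varphi) = V$, and the asserted direct sum is trivial. The substantial case is $k = 1$, where $f(x) = x\,g(x)$ with $x$ and $g(x)$ coprime in $\rm{F}[x]$. B\'ezout supplies $a(x), b(x) \in \rm{F}[x]$ with $a(x)\,x + b(x)\,g(x) = 1$, and evaluating at $\varphi$ yields $a(\varphi)\varphi + b(\varphi)g(\varphi) = id$. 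Applying this to an arbitrary $v \in V$ writes $v = \varphi\big(a(\varphi)v\big) + b(\varphi)g(\varphi)(v)$, where the first summand is in $Im(\varphi)$ and the second is in $Ker(\varphi)$ since $\varphi\big(b(\varphi)g(\varphi)(v)\big) = b(\varphi)f(\varphi)(v) = 0$; this gives $V = Ker(\varphi) + Im(\varphi)$. For directness I would take $v \in Ker(\varphi) \cap Im(\varphi)$: from $\varphi(v) = 0$ and $g(x) - g(0) \in x\,\rm{F}[x]$ one gets $g(\varphi)(v) = g(0)\,v$, while from $v = \varphi(w)$ one gets $g(\varphi)(v) = g(\varphi)\varphi(w) = f(\varphi)(w) = 0$; as $g(0) \neq 0$ in $\rm{F}$, this forces $v = 0$.

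The step I expect to be the crux is the B\'ezout argument together with the clean case split $k = 0$ versus $k = 1$: because $V$ is not assumed finite-dimensional, rank--nullity is unavailable, so the entire proof must run through the operator identity $a(\varphi)\varphi + b(\varphi)g(\varphi) = id$ rather than through dimension counts. Everything after that is routine bookkeeping inside $\rm{F}[\varphi]$. Since this is a lemma of \cite{ZCM}, one could alternatively just cite it, but the argument above is short enough to reproduce.
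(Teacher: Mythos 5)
Your proof is correct. Note that the paper itself offers no proof of this lemma---it is simply quoted from \cite{ZCM}---so there is no in-paper argument to compare against. Your B\'ezout argument is the standard one and is sound in every step: the case split on $k\in\{0,1\}$ from $f(x)=x^{k}g(x)$ with $g(0)\neq 0$ is exactly the content of the hypothesis $x^{2}\nmid f(x)$; the identity $a(\varphi)\varphi+b(\varphi)g(\varphi)=\mathrm{id}$ gives the sum decomposition; and the computation $g(\varphi)(v)=g(0)v$ versus $g(\varphi)(v)=f(\varphi)(w)=0$ on $Ker(\varphi)\cap Im(\varphi)$ gives directness. You are also right that avoiding rank--nullity matters here, since the statement is phrased for a general linear space (with the existence of a minimal polynomial taken as part of the hypothesis), which is the setting in which Proposition \ref{prop:7.6} applies it.
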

\begin{prop}\label{prop:7.6}
Let $\mathcal{A}$ be a ternary Jordan algebra and $f \in \Gamma(\mathcal{A})$. Then the following results hold:
\begin{enumerate}[(1)]
\item $Ker(f)$ and $Im(f)$ are ideals of $\mathcal{A}$;
\item If $\mathcal{A}$ is indecomposable, $f \neq 0$ and $x^{2}$ can't divide the minimal polynomial of $f$, then $f$ is invertible;
\item If $\mathcal{A}$ is indecomposable, $\mathcal{A} = \rm{[\![\mathcal{A},\mathcal{A},\mathcal{A}]\!]}$ and $\Gamma(\mathcal{A})$ consists of semisimple elements, then $\Gamma(\mathcal{A})$ is a field.
\end{enumerate}
\end{prop}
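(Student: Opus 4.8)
The plan is to dispatch the three items in order, using nothing beyond the defining identities of the centroid together with Lemma~\ref{lem:7.5}. For (1), I would argue straight from the chain $f([\![x,y,z]\!]) = [\![f(x),y,z]\!] = [\![x,f(y),z]\!] = [\![x,y,f(z)]\!]$: if $x \in Ker(f)$, then $f([\![x,y,z]\!]) = [\![f(x),y,z]\!] = 0$ for all $y,z \in \mathcal{A}$, so $[\![Ker(f),\mathcal{A},\mathcal{A}]\!] \subseteq Ker(f)$; and if $w = f(x) \in Im(f)$, then $[\![w,y,z]\!] = [\![f(x),y,z]\!] = f([\![x,y,z]\!]) \in Im(f)$. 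Hence both $Ker(f)$ and $Im(f)$ are ideals; this step is entirely routine.

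For (2), the observation is that ``$x^{2}$ does not divide the minimal polynomial of $f$'' is precisely the hypothesis Lemma~\ref{lem:7.5} requires, so it yields $\mathcal{A} = Ker(f) \dotplus Im(f)$. By (1) this exhibits $\mathcal{A}$ as a direct sum of two ideals, and indecomposability forces one of them to vanish. Since $f \neq 0$ excludes $Im(f) = \{0\}$, we must have $Ker(f) = \{0\}$ and consequently $Im(f) = \mathcal{A}$, so $f$ is bijective, i.e.\ invertible.

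For (3), I would first check that $\Gamma(\mathcal{A})$ is a commutative, associative, unital algebra. Unitality ($id \in \Gamma(\mathcal{A})$) and associativity are immediate; commutativity is the one computation worth doing: for $f,g \in \Gamma(\mathcal{A})$ one gets $(fg)([\![x,y,z]\!]) = [\![f(x),g(y),z]\!] = (gf)([\![x,y,z]\!])$, and since $[\![\mathcal{A},\mathcal{A},\mathcal{A}]\!] = \mathcal{A}$ this forces $fg = gf$. Next, take $f \in \Gamma(\mathcal{A})$ with $f \neq 0$; semisimplicity of $f$ as an endomorphism means its minimal polynomial is squarefree, so $x^{2}$ does not divide it, and (2) --- applicable because $\mathcal{A}$ is indecomposable --- shows $f$ is invertible as a linear map. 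A brief substitution in the centroid identities (replace $x$ by $f^{-1}(x)$, and likewise in the remaining two slots, then apply $f^{-1}$) shows $f^{-1} \in \Gamma(\mathcal{A})$ as well. Since $\Gamma(\mathcal{A})$ is commutative and every nonzero element is invertible, it is a field.

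The genuinely substantive points are the commutativity of $\Gamma(\mathcal{A})$ in (3) --- which is exactly where the perfectness assumption $\mathcal{A} = [\![\mathcal{A},\mathcal{A},\mathcal{A}]\!]$ is consumed --- and the translation of ``$f$ semisimple'' into the hypothesis needed to invoke (2); everything else is direct bookkeeping with the centroid relations. The only place where I expect to have to be a little careful is in verifying that $f^{-1}$ genuinely lies in $\Gamma(\mathcal{A})$ and is not merely a vector-space inverse, but this is forced by substituting $f^{-1}$-preimages into each of the four defining equalities.
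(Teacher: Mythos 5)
Your proposal is correct and follows essentially the same route as the paper: (1) by direct computation with the centroid identities, (2) via Lemma~\ref{lem:7.5} plus indecomposability, and (3) by combining invertibility from (2) with the commutativity computation that uses $\mathcal{A}=[\![\mathcal{A},\mathcal{A},\mathcal{A}]\!]$. Your added check that $f^{-1}$ itself lies in $\Gamma(\mathcal{A})$ is a point the paper leaves implicit, and it is a worthwhile precaution.
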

\begin{proof}
(1). It follows from a direct computation.

(2). According to Lemma \ref{lem:7.5}, we have $\mathcal{A} = Ker(f) \oplus Im(f)$. Note that $\mathcal{A}$ is indecomposable and $f \neq 0$, we have $Ker(f) = 0$ and $Im(f) = \mathcal{A}$. Hence, $f$ is invertible.

(3). According to (2), we have $f$ is invertible for any $f \in \Gamma(\mathcal{A})$. It's obvious that $id \in \Gamma(\mathcal{A})$. Suppose that there exist $f_{1}, f_{2} \in \Gamma(\mathcal{A})$ such that $f_{1}f_{2} = 0$. On the other hand, we have $f_{1}f_{2}$ is invertible since $f_{1}, f_{2}$ are invertible. Contraction. Hence, $\Gamma(\mathcal{A})$ has no zero divisor. For any $f_{1}, f_{2} \in \Gamma(\mathcal{A})$, $x, y, z \in \mathcal{A}$, we have
\[\it{f_{1}}\it{f_{2}}(\rm{[\![\it{x},\it{y},\it{z}]\!]}) = \it{f_{1}}(\rm{[\![\it{f_{2}}(\it{x}),\it{y},\it{z}]\!]}) = \rm{[\![\it{f_{2}}(\it{x}),\it{f_{1}}(\it{y}),\it{z}]\!]} = \it{f_{2}}\it{f_{1}}(\rm{[\![\it{x},\it{y},\it{z}]\!]}),\]
which implies that $f_{1}f_{2} = f_{2}f_{1}$ since $\mathcal{A} = \rm{[\![\mathcal{A},\mathcal{A},\mathcal{A}]\!]}$. Therefore, $\Gamma(\mathcal{A})$ is a field.
\end{proof}
\begin{thm}\label{thm:7.7}
Let $\pi : \mathcal{A}_{1} \rightarrow \mathcal{A}_{2}$ be an epimorphism of ternary Jordan algebras. Then for any $f \in \rm{End}_{\rm{F}}(\mathcal{A}_{1}, \it{Ker}({\pi})) = \{\it{g} \in \rm{End}_{\rm{F}}(\mathcal{A}_{1}) \mid \it{g}(Ker(\pi)) \subseteq Ker(\pi)\}$, there exists a unique $\bar{f} \in \rm{End}_{\rm{F}}(\mathcal{A}_{2})$ satisfying $\pi \circ f = \bar{f} \circ \pi$. Moreover, the following results hold:
\begin{enumerate}[(1)]
\item The map $\pi_{\rm{End}} : \rm{End}_{\rm{F}}(\mathcal{A}_{1}, \it{Ker}({\pi})) \rightarrow \rm{End}_{\rm{F}}(\mathcal{A}_{2})$, $\it{f} \mapsto \it{\bar{f}}$ is an algebra homomorphism with the following properties:
    \begin{enumerate}[(a)]
    \item $\pi_{\rm{End}}(Mult(\mathcal{A}_{1})) = Mult(\mathcal{A}_{2})$, $\pi_{\rm{End}}(\Gamma(\mathcal{A}_{1}) \cap \rm{End}_{\rm{F}}(\mathcal{A}_{1}, \it{Ker}({\pi}))) \subseteq \Gamma(\mathcal{A}_{2})$ where $Mult(\mathcal{A}_{i})$ denote the subalgebras of $\rm{End}_{\rm{F}}(\mathcal{A}_{i})$ generated by the right multiplication operators of $\mathcal{A}_{i}$ for $i = 1, 2$.
    \item By restriction, there is an algebra homomorphism
         \[\pi_{\Gamma} : \Gamma(\mathcal{A}_{1}) \cap \rm{End}_{\rm{F}}(\mathcal{A}_{1}, \it{Ker}({\pi})) \rightarrow \Gamma(\mathcal{A}_{2}), f \mapsto \bar{f};\]
    \item If $Ker(\pi) = Z(\mathcal{A}_{1})$, then every $\psi \in \Gamma(\mathcal{A}_{1})$ leaves $Ker(\pi)$ invariant.
    \end{enumerate}
\item Suppose that $\mathcal{A}_{1}$ is perfect and $Ker(\pi) \subseteq Z(\mathcal{A}_{1})$. Then
      \[\pi_{\Gamma} : \Gamma(\mathcal{A}_{1}) \cap \rm{End}_{\rm{F}}(\mathcal{A}_{1}, \it{Ker}({\pi})) \rightarrow \Gamma(\mathcal{A}_{2}), f \mapsto \bar{f}\]
      is injective;
\item If $\mathcal{A}_{1}$ is perfect, $Z(\mathcal{A}_{2}) = \{0\}$ and $Ker(\pi) \subseteq Z(\mathcal{A}_{1})$, then $\pi_{\Gamma} : \Gamma(\mathcal{A}_{1}) \rightarrow \Gamma(\mathcal{A}_{2})$ is an algebra monomorphism.
\end{enumerate}
\end{thm}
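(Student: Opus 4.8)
The plan is to define $\bar f$ by the only formula that can possibly work and then to verify every clause by elementary diagram-chasing built on the single relation $\pi\circ f=\bar f\circ\pi$; the one step needing genuine care is part~(3). To construct $\bar f$, given $f\in\rm{End}_{\rm{F}}(\mathcal{A}_{1},Ker(\pi))$ and $a\in\mathcal{A}_{2}$ I would pick $x$ with $\pi(x)=a$ and set $\bar f(a):=\pi(f(x))$. Well-definedness is exactly where the hypothesis on $f$ enters: if $\pi(x)=\pi(x')$ then $x-x'\in Ker(\pi)$, so $f(x-x')\in Ker(\pi)$ and $\pi(f(x))=\pi(f(x'))$. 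Linearity of $\bar f$ and the identity $\pi\circ f=\bar f\circ\pi$ are immediate, and uniqueness follows because any $g$ with $g\circ\pi=\pi\circ f$ has $(g-\bar f)\circ\pi=0$, hence $g=\bar f$ by surjectivity of $\pi$.

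For part~(1) I would first note that $Ker(\pi)$ is an ideal of $\mathcal{A}_{1}$, being the kernel of an algebra homomorphism; hence each right multiplication $R_{(y,z)}$ preserves $Ker(\pi)$ and $\rm{End}_{\rm{F}}(\mathcal{A}_{1},Ker(\pi))$ is a subalgebra of $\rm{End}_{\rm{F}}(\mathcal{A}_{1})$ containing $Mult(\mathcal{A}_{1})$. The relations $\pi\circ(f+g)=(\bar f+\bar g)\circ\pi$, $\pi\circ(\lambda f)=(\lambda\bar f)\circ\pi$ and $\pi\circ(fg)=\bar f\circ\bar g\circ\pi$, combined with uniqueness, force $\pi_{\rm{End}}$ to be an algebra homomorphism. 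For (a): from $\pi\circ R^{\mathcal{A}_{1}}_{(y,z)}=R^{\mathcal{A}_{2}}_{(\pi(y),\pi(z))}\circ\pi$ one reads off $\overline{R^{\mathcal{A}_{1}}_{(y,z)}}=R^{\mathcal{A}_{2}}_{(\pi(y),\pi(z))}$, and since $\pi$ is onto these exhaust the generators of $Mult(\mathcal{A}_{2})$, so $\pi_{\rm{End}}(Mult(\mathcal{A}_{1}))=Mult(\mathcal{A}_{2})$; and for $f\in\Gamma(\mathcal{A}_{1})\cap\rm{End}_{\rm{F}}(\mathcal{A}_{1},Ker(\pi))$, writing $a=\pi(x),b=\pi(y),c=\pi(z)$, each of the three centroid identities for $\bar f$ on $\mathcal{A}_{2}$ collapses via $\pi\circ f=\bar f\circ\pi$ to the corresponding identity for $f$ on $\mathcal{A}_{1}$ (e.g. $\rm{[\![\it{\bar f}(\it{a}),\it{b},\it{c}]\!]}=\pi(\rm{[\![\it{f}(\it{x}),\it{y},\it{z}]\!]})=\pi(\it{f}(\rm{[\![\it{x},\it{y},\it{z}]\!]}))=\it{\bar f}(\rm{[\![\it{a},\it{b},\it{c}]\!]})$), so $\bar f\in\Gamma(\mathcal{A}_{2})$; restricting $\pi_{\rm{End}}$ then gives the algebra homomorphism $\pi_{\Gamma}$ in (b). For (c): if $Ker(\pi)=Z(\mathcal{A}_{1})$ and $\psi\in\Gamma(\mathcal{A}_{1})$, then for $z\in Z(\mathcal{A}_{1})$ and all $x,y$ we get $\rm{[\![\psi(\it{z}),\it{x},\it{y}]\!]}=\psi(\rm{[\![\it{z},\it{x},\it{y}]\!]})=0$, so $\psi(z)\in Z(\mathcal{A}_{1})=Ker(\pi)$.

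For part~(2), I would take $f\in\Gamma(\mathcal{A}_{1})\cap\rm{End}_{\rm{F}}(\mathcal{A}_{1},Ker(\pi))$ with $\bar f=0$; then $\pi\circ f=0$, so $f(\mathcal{A}_{1})\subseteq Ker(\pi)\subseteq Z(\mathcal{A}_{1})$, and perfectness of $\mathcal{A}_{1}$ is decisive here: $f(\mathcal{A}_{1})=f(\rm{[\![\mathcal{A}_{1},\mathcal{A}_{1},\mathcal{A}_{1}]\!]})=\rm{[\![\it{f}(\mathcal{A}_{1}),\mathcal{A}_{1},\mathcal{A}_{1}]\!]}\subseteq\rm{[\![\it{Z}(\mathcal{A}_{1}),\mathcal{A}_{1},\mathcal{A}_{1}]\!]}=0$, so $f=0$ and $\pi_{\Gamma}$ is injective. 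For part~(3), the crux — the step I expect to be the real content of the proof — is to observe that the hypotheses already force $Ker(\pi)=Z(\mathcal{A}_{1})$: the inclusion $\subseteq$ is assumed, and for $w\in Z(\mathcal{A}_{1})$, $\rm{[\![\pi(\it{w}),\pi(\it{x}),\pi(\it{y})]\!]}=\pi(\rm{[\![\it{w},\it{x},\it{y}]\!]})=0$ for all $x,y$, so by surjectivity $\pi(w)\in Z(\mathcal{A}_{2})=\{0\}$, i.e. $w\in Ker(\pi)$. Then (c) shows every $\psi\in\Gamma(\mathcal{A}_{1})$ leaves $Ker(\pi)$ invariant, so $\Gamma(\mathcal{A}_{1})\subseteq\rm{End}_{\rm{F}}(\mathcal{A}_{1},Ker(\pi))$; hence $\pi_{\Gamma}$ is defined on all of $\Gamma(\mathcal{A}_{1})$, is an algebra homomorphism by (b), and is injective by part~(2), so it is an algebra monomorphism. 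Apart from this observation and the use of perfectness in part~(2), everything is routine bookkeeping with $\pi\circ f=\bar f\circ\pi$.
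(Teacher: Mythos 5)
Your proposal is correct and follows essentially the same route as the paper: the same pointwise construction of $\bar f$ with well-definedness from $f(Ker(\pi))\subseteq Ker(\pi)$, the same diagram-chase verifications for (1)(a)--(c), the same use of perfectness together with the centroid identity to get injectivity in (2), and the same key observation in (3) that $Z(\mathcal{A}_{2})=\{0\}$ forces $Ker(\pi)=Z(\mathcal{A}_{1})$, whence $\Gamma(\mathcal{A}_{1})\subseteq\mathrm{End}_{\mathrm{F}}(\mathcal{A}_{1},Ker(\pi))$ and (2) applies. No gaps; nothing further to add.
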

\begin{proof}
For any $y \in \mathcal{A}_{2}$, there exists $x \in \mathcal{A}_{1}$ such that $y = \pi(x)$. Define $\bar{f} : \mathcal{A}_{2} \rightarrow \mathcal{A}_{2}$ to be a linear map by $\bar{f}(y) = \pi(f(x))$ where $y = \pi(x)$. We see that $\bar{f}$ is well-defined. Indeed, if there exist $x_{1}, x_{2} \in \mathcal{A}_{1}$ such that $\pi(x_{1}) = \pi(x_{2})$, then $x_{1} - x_{2} \in Ker(\pi)$ and $f(x_{1} - x_{2}) \in Ker(\pi)$. So $\pi(f(x_{1} - x_{2})) = 0$, i.e., $\pi(f(x_{1})) = \pi(f(x_{2}))$. It's obvious that $\pi \circ f = \bar{f} \circ \pi$.

Suppose that there exists $\bar{f}^{'} \in \rm{End}_{\rm{F}}(\mathcal{A}_{2})$ such that $\pi \circ f = \bar{f}^{'} \circ \pi$. Then for any $y \in \mathcal{A}_{2}$, there exists $x \in \mathcal{A}_{1}$ such that $y = \pi(x)$. Moreover, $\bar{f}^{'}(y) = \bar{f}^{'}(\pi(x)) = \pi(f(x)) = \bar{f}(\pi(x)) = \bar{f}(y)$. So $\bar{f}^{'} = \bar{f}$. The uniqueness holds.

(1). For all $f, g \in \rm{End}_{\rm{F}}(\mathcal{A}_{1}, \it{Ker}(\pi))$, we have
\[\pi \circ (f \circ g) = (\pi \circ f) \circ g = (\bar{f} \circ \pi) \circ g = \bar{f} \circ (\pi \circ g) = \bar{f} \circ (\bar{g} \circ \pi) = (\bar{f} \circ \bar{g}) \circ \pi,\]
which implies that $\pi_{\rm{End}}\it{(f \circ g)} = \bar{f} \circ \bar{g} = \pi_{\rm{End}}(\it{f}) \circ \pi_{\rm{End}}(\it{g})$. So $\pi_{\rm{End}}$ is an algebra homomorphism.

(a). For any $R_{(x, y)} \in Mult(\mathcal{A}_{1})$ and for any $z \in Ker(\pi)$, we have
\[\pi(R_{(x, y)}(z)) = \pi(\rm{[\![\it{z},\it{x},\it{y}]\!]}) = \rm{[\![\pi(\it{z}),\pi(\it{x}),\pi(\it{y})]\!]} = 0,\]
which implies that $R_{(x, y)}(z) \in Ker(\pi)$, i.e., $R_{(x, y)} \in \rm{End}_{\rm{F}}(\mathcal{A}_{1}, \it{Ker}({\pi}))$. Hence, $Mult(\mathcal{A}_{1}) \subseteq \rm{End}_{\rm{F}}(\mathcal{A}_{1}, \it{Ker}({\pi}))$. Moreover, we see that $\pi \circ R_{(x, y)} = R_{(\pi(x), \pi(y))} \circ \pi$, so $\pi_{\rm{End}}(R_{\it{(x, y)}}) = R_{(\pi(x), \pi(y))}$. Moreover, $\pi$ is an epimorphism, so $\pi_{\rm{End}}(Mult(\mathcal{A}_{1})) = Mult(\mathcal{A}_{2})$.

For any $f \in \Gamma(\mathcal{A}_{1}) \cap \rm{End}_{\rm{F}}(\mathcal{A}_{1}, \it{Ker}({\pi}))$ and $x, y, z \in \mathcal{A}_{2}$, there exist $u, v, w \in \mathcal{A}_{1}$ such that $x = \pi(u)$, $y = \pi(v)$ and $z = \pi(w)$. Moreover, we have
\begin{align*}
&\bar{\it{f}}(\rm{[\![\it{x},\it{y},\it{z}]\!]}) = \bar{\it{f}}(\rm{[\![\pi(\it{u}),\pi(\it{v}),\pi(\it{w})]\!]}) = \bar{\it{f}} \circ \pi(\rm{[\![\it{u},\it{v},\it{w}]\!]}) = \pi \circ \it{f}(\rm{[\![\it{u},\it{v},\it{w}]\!]})\\
&= \pi(\rm{[\![\it{f}(\it{u}),\it{v},\it{w}]\!]}) = \rm{[\![\pi \circ \it{f}(\it{u}),\pi(\it{v}),\pi(\it{w})]\!]} = \rm{[\![\bar{\it{f}} \circ \pi(\it{u}),\pi(\it{v}),\pi(\it{w})]\!]} = \rm{[\![\bar{\it{f}}(\it{x}),\it{y},\it{z}]\!]},
\end{align*}
similarly, we have $\bar{\it{f}}(\rm{[\![\it{x},\it{y},\it{z}]\!]}) = \rm{[\![\it{x},\bar{\it{f}}(\it{y}),\it{z}]\!]} = \rm{[\![\it{x},\it{y},\bar{\it{f}}(\it{z})]\!]}$. So $\bar{f} \in \Gamma(\mathcal{A}_{2})$, i.e., $\pi_{\rm{End}}(\Gamma(\mathcal{A}_{1}) \cap \rm{End}_{\rm{F}}(\mathcal{A}_{1}, \it{Ker}({\pi}))) \subseteq \Gamma(\mathcal{A}_{2})$.

(b). $\forall f, g \in \Gamma(\mathcal{A}_{1}) \cap \rm{End}_{\rm{F}}(\mathcal{A}_{1}, \it{Ker}(\pi))$, we have
\[\pi \circ (f \circ g) = (\pi \circ f) \circ g = (\bar{f} \circ \pi) \circ g = \bar{f} \circ (\pi \circ g) = \bar{f} \circ (\bar{g} \circ \pi) = (\bar{f} \circ \bar{g}) \circ \pi,\]
which implies that $\pi_{\Gamma}(f \circ g) = \bar{f} \circ \bar{g} = \pi_{\Gamma}(f) \circ \pi_{\Gamma}(g)$. So $\pi_{\Gamma}$ is a homomorphism.

(c). For any $x \in Ker(\pi)$, $y, z \in \mathcal{A}_{1}$ and $\psi \in \Gamma(\mathcal{A}_{1})$, we have $\rm{[\![\psi(\it{x}),\it{y},\it{z}]\!]} = \psi(\rm{[\![\it{x},\it{y},\it{z}]\!]}) = 0$, which implies that $\psi(x) \in Z(\mathcal{A}_{1}) = Ker(\pi)$. So $\psi$ leaves $Ker(\pi)$ invariant.

(2). If $\bar{\varphi} = 0$ for $\varphi \in \Gamma(\mathcal{A}_{1}) \cap \rm{End}_{\rm{F}}(\mathcal{A}_{1}, \it{Ker}(\pi))$, then $\pi(\varphi(\mathcal{A}_{1})) = \bar{\varphi}(\pi(\mathcal{A}_{1})) = 0$. Hence, $\varphi(\mathcal{A}_{1}) \subseteq Ker(\pi) \subseteq Z(\mathcal{A}_{1})$.

Hence, $\varphi(\rm{[\![\it{x},\it{y},\it{z}]\!]}) = \rm{[\![\varphi(\it{x}),\it{y},\it{z}]\!]} = 0$. Note that $\mathcal{A}_{1}$ is perfect, we have $\varphi = 0$. Therefore, $\pi_{\Gamma}$ is injective.

(3). $\forall y, z \in \mathcal{A}_{2}, \exists y^{'}, z^{'} \in \mathcal{A}_{1}$ such that $y = \pi(y^{'})$ and $z = \pi(z^{'})$. For all $x \in Z(\mathcal{A}_{1})$,
\[\rm{[\![\pi(\it{x}),\it{y},\it{z}]\!]} = \rm{[\![\pi(\it{x}),\pi(\it{y}^{'}),\pi(\it{z}^{'})]\!]} = \pi(\rm{[\![\it{x},\it{y}^{'},\it{z}^{'}]\!]}) = 0,\]
which implies that $\pi(x) \in Z(\mathcal{A}_{2})$. So $\pi(Z(\mathcal{A}_{1})) \subseteq Z(\mathcal{A}_{2}) = \{0\}$. Therefore, $Z(\mathcal{A}_{1}) \subseteq Ker(\pi)$. Note that $Ker(\pi) \subseteq Z(\mathcal{A}_{1})$, we have $Z(\mathcal{A}_{1}) = Ker(\pi)$.

For all $\varphi \in \Gamma(\mathcal{A}_{1}), x \in Z(\mathcal{A}_{1}), y, z \in \mathcal{A}_{1}$,
\[\rm{[\![\varphi(\it{x}),\it{y},\it{z}]\!]} = \varphi(\rm{[\![\it{x},\it{y},\it{z}]\!]}) = 0,\]
which implies that $\varphi(x) \in Z(\mathcal{A}_{1}) = Ker(\pi)$. So $\varphi(Ker(\pi)) \subseteq Ker(\pi)$, i.e., $\varphi \in \rm{End}_{\rm{F}}(\mathcal{A}_{1}, \it{Ker}(\pi))$. So $\Gamma(\mathcal{A}_{1}) \subseteq \rm{End}_{\rm{F}}(\mathcal{A}_{1}, \it{Ker}(\pi))$. Therefore, $\Gamma(\mathcal{A}_{1}) \cap \rm{End}_{\rm{F}}(\mathcal{A}_{1}, \it{Ker}({\pi})) = \rm{\Gamma}(\mathcal{A}_{1})$. According to (1) (b) and (2), we have $\pi_{\Gamma}$ is a monomorphism.
\end{proof}


\begin{thebibliography}{99}
\bibitem{A1} Albert, A. A. {\it A structure theory for Jordan algebras.} Ann. of Math. 48(1947), No. 3, 546-567.
\bibitem{B1} Bre$\check{s}$ar, M. {\it Derivations of tensor products of nonassociative algebras.} Linear Algebra Appl. 530(2017), 244-252.
\bibitem{B2} Bre$\check{s}$ar, M. {\it Centralizing mappings and derivations in prime rings.} J. Algebra 156(1993), No. 2, 385-394.
\bibitem{BCS} Bre$\check{s}$ar, M., Chebotar, M. A., $\check{S}$emrl, P. {\it On derivations of prime rings.} Comm. Algebra 27(1999), No. 7, 3129-3135.
\bibitem{CMN} Chen, L. Y., Ma, Y., Ni, L. {\it Generalized derivations of Lie color algebras.} Results Math. 63(2013), No. 3-4, 923-936.
\bibitem{F1} Francais, J. F. {\it Analise transcendante. Du calcul des d$\acute{e}$rivations, ramen$\acute{e}$ $\grave{a}$ ses v$\acute{e}$ritables principes, ou th$\acute{e}$orie du d$\acute{e}$veloppement des fonctions, et du retour des suites.}(French) Ann. Math. Pures Appl. 6(1815/16), 61-111.
\bibitem{HLS} Hartwig, J. T., Larsson, D., Silvestrov, S. D. {\it Deformations of Lie algebras using $\sigma$-derivations.} J. Algebra 295(2006), No. 2, 314-361.
\bibitem{IAP} Kaygorodov, I., Pozhidaev, A., Saraiva, P. {\it On a ternary generalization of Jordan algebras.} Linear Multilinear Algebra 67(2019), No. 6, 1074-1102.
\bibitem{LS} Larsson, D., Silvestrov, S. D. {\it Quasi-deformations of $sl_{2}(\rm{F})$ using twisted derivations.} Comm. Algebra 35(2007), No. 12, 4303-4318.
\bibitem{L2} Leger, G. F., Luks, E. M. {\it Generalized derivations of Lie algebras.} J. Algebra 228(2000), 165-203.
\bibitem{LW} Lin, J., Wang, Y. {\it Centroids of Lie triple systems.} Acta Scientiarum Naturalium Universitatis Nankaiensis 43(2010), No. 5, 98-104.
\bibitem{M1} Meng, D. J. {\it Abstract algebras. \uppercase\expandafter{\romannumeral2}, Associative algebras.} Beijing: Science Press, (2011), pp. 152-157.(in Chinese)
\bibitem{S1} Shestakov, A. I. {\it Ternary derivations of Jordan superalgebras.} Algebra Logic 53(2014), No. 4, 323-348.
\bibitem{Z1} Zhang, R. X., Zhang, Y. Z. {\it Generalized derivations of Lie superalgebras.} Comm. Algebra 38(2010), 3737-3751.
\bibitem{ZCM} Zhou, J., Chen, L. Y., Ma, Y. {\it Generalized Derivations of Lie triple systems.} Bull. Malays. Math. Sci. Soc. 41(2018), No. 2, 637-656.
\end{thebibliography}
\end{document}